\documentclass[12pt]{amsart}

\usepackage{amsmath,amssymb,amsthm}
\usepackage{mathtools}
\usepackage{tikz-cd}
\usepackage{hyperref}
\usepackage{microtype}
\usepackage{enumitem}
\usepackage{booktabs}
\usepackage[margin=1.2in]{geometry}
\usepackage{cleveref}

\hypersetup{hidelinks}

\theoremstyle{plain}
\newtheorem{theorem}{Theorem}[section]
\newtheorem{proposition}[theorem]{Proposition}
\newtheorem{lemma}[theorem]{Lemma}
\newtheorem{corollary}[theorem]{Corollary}

\theoremstyle{definition}
\newtheorem{definition}[theorem]{Definition}
\newtheorem{example}[theorem]{Example}
\newtheorem{construction}[theorem]{Construction}

\theoremstyle{remark}
\newtheorem{remark}[theorem]{Remark}

\newtheorem{hypothesis}[theorem]{Hypothesis}

\DeclareMathOperator{\Aut}{Aut}
\DeclareMathOperator{\id}{id}
\DeclareMathOperator{\im}{im}

\newcommand{\Z}{\mathbb{Z}}
\newcommand{\RP}{\mathbb{R}P}
\newcommand{\CP}{\mathbb{C}P}

\newcommand{\Autp}{\Aut^{+}_{\bullet}}
\newcommand{\Autb}{\Aut_{\bullet}}
\newcommand{\Autf}{\Aut_{\mathrm{free}}}
\newcommand{\Ompin}{\Omega^{\mathrm{Pin}^{+}}}
\newcommand{\Ompinm}{\Omega^{\mathrm{Pin}^{-}}}
\newcommand{\Omspin}{\Omega^{\mathrm{spin}}}
\newcommand{\wt}[1]{\widetilde{#1}}
\newcommand{\Zw}{\Z^{w}}
\newcommand{\BO}{BO}

\begin{document}

\title[Homotopy self-equivalences]{%
  Homotopy Self-Equivalences of $\RP^2\times S^2$\\
  and a Braid for non-orientable $4$-Manifolds}

\author{Mehmetcik Pamuk}
\date{\today}

\begin{abstract}
We compute the group of based homotopy classes of self-homotopy equivalences of the
non-orientable $4$-manifold $\RP^2\times S^2$. It is an elementary abelian $2$-group of rank $5$.
The subgroup of classes whose based lift to the universal cover $S^2\times S^2$ has degree $+1$ has
rank $4$ and is isomorphic to the group of free homotopy classes. The group maps onto the group of
self-equivalences of the algebraic $2$-type, which has order $8$, with kernel
$\pi_4(\RP^2\times S^2)\cong(\Z/2)^2$ generated by pinch-fold maps; that these maps are independent is shown by
an adjunction argument. We then construct a non-orientable analogue of the Hambleton--Kreck braid of exact sequences in
dimension $4$, for non-orientable manifolds with spin universal cover, stated for finite fundamental group. We apply
the braid to $\RP^2\times S^2$ and to $\RP^4$, and for the Hambleton--Hillman manifold with fundamental group $(\Z/2)^2$ we determine the normal $1$-type and show that self-equivalences act on $\pi_2$ through diagonal sign matrices. For
$\RP^2\times S^2$ the pinch-fold maps are detected by twisted spin bordism, and, assuming facts from surgery theory
stated as hypotheses, exactly one of the three non-trivial pinch-fold maps is homotopic to a homeomorphism.
\end{abstract}

\maketitle

\section{Introduction}

The classification of closed topological $4$-manifolds with finite fundamental
group builds on Freedman's work \cite{Freed82,FQ90}. For non-orientable
manifolds with $\pi_1=\Z/2$ the classification up to homeomorphism is due to
Hambleton, Kreck and Teichner \cite{HKT94}; for related results with other
cyclic fundamental groups see Torres \cite{Tor26}. Baykur and Morgan
\cite{BM25} study non-orientable $4$-manifolds from the point of view of
broken Lefschetz fibrations. Manifolds whose universal cover is $S^2\times S^2$
are treated by Hambleton and Hillman \cite{HH23}.

Knowing the group of homotopy self-equivalences of a manifold $M$ is a
prerequisite for describing the homotopy-equivalence classes in a given
homeomorphism class, and for the surgery-theoretic study of the structure set.
For oriented closed $4$-manifolds with odd order fundamental group,
Hambleton and Kreck \cite{HK04} describe $\Autb(M)$ by a braid of exact
sequences built from spin and normal-$2$-type bordism. Their argument is
written for oriented manifolds, and we are not aware of a non-orientable
version in the literature.

For a based self-equivalence $f$ of $M$ let $s(f)=\pm1$ be the degree of the unique lift $\widetilde f$ of $f$ to the universal cover fixing a chosen lift of the basepoint, and write $\Aut^{+}_\bullet(M)=\ker s$.

Let us say what the braid is for. For a self-equivalence $f$ of $M$ fixing the basepoint, the class $\alpha(f)=[M,f,\sigma]-[M,\mathrm{id},\sigma]$ in twisted spin bordism over $M$ records whether $f$ can be undone by a $B$-bordism over $M$; the kernel of $\alpha$ consists of the classes arising from the surgery-theoretic group $\widetilde{\mathcal H}(M)$. The kernel of the map from $\Aut_\bullet^+(M)$ to the self-equivalences of the algebraic $2$-type $P$ is the image of a relative bordism group $\widehat\Omega^B_5(P,M)$, and $\alpha$ restricted to this kernel is the boundary map $\partial$. So the braid turns the computation of the part of $\Aut_\bullet(M)$ invisible to $P$ into a computation in bordism, which can be done with the Atiyah--Hirzebruch spectral sequence.

In this paper we first treat the simplest non-orientable case, $M=\RP^2\times S^2$, by direct
obstruction theory (\cref{thm:main}). We then formulate a non-orientable analogue of the braid in
dimension~$4$ (\cref{thm:braidNO}) and test it on the same example.
\begin{itemize}
  \item Orientations are replaced by twisted degrees and $\Z$ by $\Zw$ in Poincar\'e duality. For finite $\pi_1$ a
  self-map of twisted degree one that is the identity on $\pi_1$ is a homotopy equivalence (\cref{lem:PDup}).
  \item The $B$-structures on the normal bundle with fixed $B\pi$-component and fixed orientation form a torsor over $H^1(M;\Z/2)$, which is non-zero because $w_1(M)\ne0$. For a fixed structure the braid needs no further gauge data: pushing a twisted spin manifold forward along
  a self-equivalence is canonical (\cref{lem:canon,lem:transport}), and the map $\alpha$ of the braid is a crossed homomorphism (\cref{prop:crossed}).
  \item For $\RP^2\times S^2$ the twisted spin bordism group $\widehat\Omega^B_4$ has order $16$, and the two generators of the kernel
  $\pi_4(M)$ are detected by $\alpha$, one of them in the filtration-zero term $E^\infty_{0,4}$ (\cref{prop:detect}).
  \item For $\RP^4$ the braid gives $\Aut^+_\bullet(\RP^4)=1$ (\cref{prop:RP4}).
  \item The following statement is conditional. Under two hypotheses from surgery theory, which we do not prove (\cref{sec:hyp}), of the three non-trivial elements of $\ker(\Autb(M)\to\Autb(P_2(M)))$ exactly one is
  homotopic to a homeomorphism. Independently of these hypotheses, no non-trivial element of this kernel is represented by a diffeomorphism that preserves the fixed $B$-structure on the normal bundle (\cref{prop:nodiffeo}).
\end{itemize}
\Cref{thm:main} is proved by obstruction theory in \cref{sec:automorphisms}, and the four sequences of \cref{thm:braidNO} are proved in \cref{sec:proofs}.

\begin{theorem}\label{thm:main}
  Let $M=\RP^2\times S^2$ with a basepoint $x_0$. Then
  \[
    \Autb(M)\;\cong\;(\Z/2)^5 .
  \]
  Generators are $a=r_0\times\id$ with $r_0$ a reflection of $\RP^2$,
  $b=\id\times\rho$ with $\rho$ a reflection of $S^2$,
  $s_\psi([x],y)=([x],\psi([x])\,y)$ with $\psi\colon\RP^2\to SO(3)$ the
  non-trivial map, and two pinch-fold maps $k_1,k_2$, defined in
  \cref{sec:pinchfold}, corresponding to the generators of
  $\pi_4(M)\cong(\Z/2)^2$.
  The sign homomorphism $s\colon\Autb(M)\to\{\pm1\}$, given by the degree of
  the based lift to $\wt M=S^2\times S^2$, has kernel
  $\Autb^{+}(M)\cong(\Z/2)^4$ generated by $ab$, $s_\psi$, $k_1$, $k_2$.
\end{theorem}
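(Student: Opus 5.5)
I will use the standard exact sequence coming from the Postnikov tower, via obstruction theory on the $4$-skeleton. Let $P=P_2(M)$ be the algebraic $2$-type, $\pi=\Z/2$, $\pi_2(M)=\Z^2=H_2(S^2\times S^2)$. The $\pi$-action is $(-1)$ on the $\RP^2$-factor class $e_1$ (the antipodal map has degree $-1$ on $S^2$) and trivial on $e_2$. The $k$-invariant lies in $H^3(\pi;\pi_2)$. The plan is to establish the exact sequence
\[
\pi_4(M)\xrightarrow{\;\kappa\;}\Autb(M)\longrightarrow \Autb(P)\longrightarrow 1 ,
\]
where $\kappa$ sends $\beta\in\pi_4(M)$ to the pinch-fold $M\to M\vee S^4\xrightarrow{\id\vee\beta}M$. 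Surjectivity will be checked directly by realizing the generators of $\Autb(P)$ by $a$, $b$ and $s_\psi$. Exactness in the middle is the usual statement that two maps agreeing on the $3$-skeleton (up to homotopy) differ by a single top-cell obstruction in $H^4(M;\pi_4(M))$. Here I use $\pi_3(M)=\pi_3(S^2\times S^2)=\Z^2$, and the relevant obstruction and difference groups must be controlled. Since $M$ is non-orientable, $H^4(M;\pi_4)$ is $\pi_4(M)_{\pi}$ twisted by $w_1$, and since $\pi_4(S^2\times S^2)=(\Z/2)^2$, all coinvariants equal $(\Z/2)^2$.

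First I compute $\Autb(P)$. A based self-equivalence of $P$ is a pair $(\phi,\theta)$ with $\phi=\id$ (the only automorphism of $\Z/2$) and $\theta\in\Aut_{\Z\pi}(\pi_2)$ preserving the $k$-invariant, together with the extension data $H^2(\pi;\pi_2)$. Equivariant automorphisms of $\Z^-\oplus\Z$ are the diagonal sign matrices, which gives $(\Z/2)^2$, realized by $a$ (sign on $e_1$, since a reflection of $\RP^2$ lifts to a reflection of $S^2$ commuting with the antipode) and $b$. The group $H^2(\Z/2;\Z^-\oplus\Z)=\Z/2\oplus 0$ contributes a further $\Z/2$, realized by $s_\psi$ (which acts trivially on $\pi_2$ yet is nontrivial because $\psi$ is nonzero in $[\RP^2,SO(3)]$ and changes the section class). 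I would verify that these commute up to homotopy and have order $2$, so $\Autb(P)\cong(\Z/2)^3$, of order $8$, and the sequence splits via these explicit representatives.

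Next, injectivity of $\kappa$. Its kernel is the image of a $d$-type map from $\pi_1\times$ (self-maps) acting on $\pi_4$. Concretely, I must show that $k_1$ and $k_2$ and $k_1k_2$ are not homotopic to $\id$. I expect this to be the main obstacle. I would first try the adjunction approach hinted at in the abstract: $M=\RP^2\times S^2$, so a self-map over the projection to $\RP^2$ corresponds to a map $\RP^2\to\mathrm{Map}(S^2,S^2)$. A pinch-fold then changes the adjoint by an element of $\pi_4$ pushed through $\RP^2_+\wedge S^2$, which should be detected by its restriction to the top cell of $\RP^2\times S^2$ after composing with the projections $M\to S^2$ and $M\to\RP^2$, via $\pi_4(S^2)=\Z/2$ on each factor. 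Homotopies to $\id$ would have to kill these classes. I would check that the indeterminacy coming from $\pi_1\mathrm{Map}$ (the images of $H^3(M;\pi_4)$ and $[\Sigma M,M]$ acting) vanishes, using the Steenrod-square description $\eta\circ\Sigma\eta$, which is $Sq^2$-detectable and is zero on the relevant classes.

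Finally, commutativity and exponent $2$. Pinch-folds are central, being supported on a top cell and equivariant under the sign automorphisms, because $\pi_4$ is $2$-torsion with trivial action. The squares $a^2=b^2=1$ hold on the nose, and $s_\psi^2=s_{\psi^2}$ is null since $[\RP^2,SO(3)]=\Z/2$. The one remaining subtlety is that the lifts might generate an extension instead of a product, for instance if a commutator $[a,s_\psi]$ were a nontrivial pinch-fold. I would rule this out by writing the maps explicitly as product-type maps, for which the commutators are visibly homotopic to $\id$. This gives $(\Z/2)^5$. For the sign, the based lift of $a$ and of $b$ each has degree $-1$ on $S^2\times S^2$, while $s_\psi$ lifts to a fibrewise rotation of degree $+1$ and the pinch-folds lift with degree $+1$. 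Hence $s$ is surjective with kernel generated by $ab$, $s_\psi$, $k_1$ and $k_2$.
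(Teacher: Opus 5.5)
Your architecture matches the paper's: the sequence $\pi_4(M)\to\Autb(M)\to\Autb(P)\to1$, with $\Autb(P)$ realized by $a,b,s_\psi$, central pinch-folds, and a splitting. The gap is the one step with real content, injectivity of $\kappa\mapsto k_\kappa$. You only announce it, and the mechanism you name does not do the job.

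The element $\eta\circ\Sigma\eta$ is not detected by $Sq^2$, or by any primary operation. If you mean that the part of the indeterminacy coming from $H^2(M;\pi_3)$ is given by $Sq^2$, which vanishes on $H^2(M;\Z/2)$ since $v_2(M)=0$, that disposes of at most the composition-with-$\eta$ contributions. Your adjunction $\RP^2\to\mathrm{Map}(S^2,S^2)$ also sees only the $S^2$-factor. The class $\kappa^{(1)}$ changes the $\RP^2$-component $\mathrm{pr}_1\circ k_\kappa$, which that adjunction does not see, and $\kappa^{(1)}$ is invisible on the universal cover.

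In the paper, $k_\kappa\simeq\id$ forces $\mathrm{pr}_i\circ k_\kappa=\mathrm{pr}_i+\kappa^{(i)}\simeq\mathrm{pr}_i$ (even freely), and each factor gets its own adjunction.
\begin{itemize}
\item For $\kappa^{(1)}$ one uses $S^2\to\mathrm{Map}(\RP^2,\RP^2)_{\id}$ and must kill two boundary maps. The one from restriction to $\RP^1$ is multiplication by $1+c_*=2=0$ on $\pi_4(\RP^2)$. The evaluation boundary $\pi_3(\RP^2)\to\pi_2(\mathrm{Map}_*(\RP^2,\RP^2)_{\id})$ vanishes because $\pi_3(\RP^2)$ lifts to the mapping space through the isometric $SO(3)$-action: the orbit map $SO(3)\to\RP^2$ is an isomorphism on $\pi_3$.
\item For $\kappa^{(2)}$ the indeterminacy is the boundary $[\RP^2,\Omega S^2]\cong\Z/2\to[\RP^2,\Omega^2_1S^2]$. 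It sends the generator (the pinch map followed by the adjoint of $\eta_2$) to $\pm\mathrm{pinch}^*[\eta_2,\iota_2]$. This vanishes because $\eta_2$ is a Gottlieb element, being the orbit map of the $SU(2)$-action on $S^2$.
\end{itemize}
These inputs are what stop a self-homotopy from absorbing $\kappa$. Your sketch supplies none of them, so it does not exclude $k_\kappa\simeq\id$, and the order $32$ is not established.

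There are also some smaller points.
\begin{itemize}
\item $H^2(\Z/2;\Z^-\oplus\Z)$ is $0\oplus\Z/2$, not $\Z/2\oplus0$. The extra class lives on the untwisted $S^2$-summand, as it should for $s_\psi$.
\item That $\psi\ne0$ in $[\RP^2,SO(3)]$ does not by itself give $\pi(s_\psi)\ne1$. You need an invariant that factors through $P$, such as $f^*$ on $H^2(M;\Z/2)\cong H^2(P;\Z/2)$. Combine it with $s_\psi^*b=b+a^2$, which follows from $p^*b=u^2$ for the orbit map $p\colon SO(3)\to S^2$ (Euler number $2$), with $u$ the generator of $H^1(SO(3);\Z/2)$.
\item Exactness in the middle needs $H^3(M;\pi_3(M))=H^3(M;\Z)^2=0$ to deform $f$ to the identity on $M^{(3)}$; you only say these groups ``must be controlled''.
\item Centrality of $k_\kappa$ needs $(s_\psi)_*=\id$ on $\pi_4(M)$. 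This is not automatic, because $s_\psi$ is not a product map. It holds because the based lift restricted to $S^2\times\{y_0\}$ has null-homotopic second component, as $\pi_2(SO(3))=0$.
\end{itemize}
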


\begin{corollary}\label{cor:free}
  The group of free homotopy classes of homotopy self-equivalences of $M$ is
  isomorphic to $\Autb^{+}(M)\cong(\Z/2)^4$.
\end{corollary}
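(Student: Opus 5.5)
The plan is to compare free and based homotopy classes through the conjugation action of $\pi_1(M)=\Z/2$. Free homotopy classes of self-equivalences are the orbits of $\Autb(M)$ under the action of $\pi_1(M,x_0)$. A loop $\gamma$ acts on a based map $f$ by dragging the image of the basepoint around $\gamma$ during a free homotopy. Since $M$ is path connected, every self-equivalence is freely homotopic to a based one. So the forgetful map $\Autb(M)\to\Aut_{\mathrm{free}}(M)$ is surjective, and its fibres are the $\pi_1$-orbits. Everything therefore comes down to understanding the action of the generator $t\in\pi_1(M)=\Z/2$ on $\Autb(M)\cong(\Z/2)^5$ from \cref{thm:main}.

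First I will show that the action of $t$ on $\Autb(M)$ is composition with a fixed based self-equivalence $\tau$, where $\tau$ is freely homotopic to the identity. Here $\tau$ is obtained by pushing the basepoint once around the non-trivial loop. This gives $t\cdot f=\tau\circ f$, or $f\circ\tau$ up to a conjugation which is irrelevant because $\Autb(M)$ is abelian. The orbits are then the cosets of $\langle\tau\rangle$, so $\Aut_{\mathrm{free}}(M)\cong\Autb(M)/\langle\tau\rangle$.

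Next I will identify $\tau$ concretely. Take $x_0=([e_1],y_0)$ and the loop $\theta\mapsto([\cos\theta\, e_1+\sin\theta\, e_2],y_0)$ for $\theta\in[0,\pi]$. It is realised by the isotopy $R_\theta\times\id$, where $R_\theta$ is rotation of $\RP^2$ in the $e_1e_2$-plane, and it ends at $\tau=R_\pi\times\id$. Now $R_\pi=\operatorname{diag}(-1,-1,1)$ acts on $\RP^2$ as $\operatorname{diag}(1,1,-1)$, which is a reflection fixing $[e_1]$. Hence $\tau$ is based homotopic to $r_0\times\id=a$.

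It remains to check the sign. On the universal cover, the based lift of $\tau$ fixing $(e_1,y_0)$ is $\operatorname{diag}(1,1,-1)\times\id$ on $S^2\times S^2$, which has degree $-1$. So $s(\tau)=-1$, and in particular $\tau\ne1$, consistent with $\tau$ being identified with $a$ in \cref{thm:main}. Since $s$ is a homomorphism with $s(\tau)=-1$, we get $\Autb(M)=\ker s\times\langle\tau\rangle$. The quotient map restricted to $\Autb^+(M)$ is therefore an isomorphism $\Autb^+(M)\cong\Autb(M)/\langle\tau\rangle\cong\Aut_{\mathrm{free}}(M)$, and \cref{thm:main} gives $(\Z/2)^4$.

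The main obstacle is the first step: making rigorous that the $\pi_1$-action on based classes is composition with the single point-pushing map $\tau$, and that $\tau$ is identified with $a$ up to \emph{based} homotopy rather than just free homotopy. I would handle the first part with the homotopy extension property. A free homotopy $H$ from $f$ with $H(x_0,\cdot)=\gamma$ can be compared with $\phi_s\circ f$, where $\phi_s$ is the isotopy above, so the two based endpoints differ by an element in the image of $\pi_1$. Since $\pi_1$ has order $2$, this pins down the action. For the second part, the explicit formula for $R_\pi$ already fixes $x_0$, so no extra homotopy is needed.
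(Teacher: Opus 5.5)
Your proposal is correct and follows essentially the paper's route. The paper likewise identifies the kernel of $\Autb(M)\to\Autf(M)$ (via the connecting map of the evaluation fibration, which is equivalent to your $\pi_1$-orbits) as generated by the point-pushing map, shows this map is non-trivial because its based lift has degree $-1$, and concludes that $\Autb^{+}(M)$ maps isomorphically onto the quotient.

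Two side remarks. Your identification of $\tau$ with $a$ is not needed, and it holds only when $a$ is taken to be the reflection whose fixed circle passes through the basepoint. The step you flag as the main obstacle is immediate, because $\phi_\theta\circ f$ is itself a free homotopy from $f$ whose basepoint trace is the generating loop.
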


Consider the manifold of Hambleton and Hillman. Their
manifold $N$ has $\pi_1(N)\cong(\Z/2)^2$, $\chi(N)=1$, and universal cover
homotopy equivalent to $S^2\times S^2$. It has two non-orientable double covers. One is $\RP^4\#_{S^1}\RP^4$, with $v_2\ne0$; the other has $v_2=0$ and is homotopy equivalent to $\RP^2\times S^2$ by \cite{HKT94,HH23}. The group $\Autb(\RP^2\times S^2)$ is
therefore the natural first object in a study of $\Autb(N)$; see
\cref{sec:manifoldN} for the facts about $N$ that we use.

\section{Conventions and Stiefel--Whitney classes}
\label{sec:preliminaries}

\subsection{Pin structures}
Let $X$ be a smooth manifold. A \emph{tangential} $\mathrm{Pin}^{+}$-structure on $X$ exists if and only if $w_2(X)=0$, and a
tangential $\mathrm{Pin}^{-}$-structure exists if and only if $w_2(X)+w_1(X)^2=0$. For the stable normal bundle $\nu$ we have
$w_2(\nu)=w_2(X)+w_1(X)^2$ and $w_1(\nu)=w_1(X)$, so a \emph{normal} $\mathrm{Pin}^{+}$-structure exists if and only if $w_2(\nu)=0$, and a
normal $\mathrm{Pin}^{-}$-structure if and only if $w_2(\nu)=w_1^2$. Thus the normal labelling interchanges the two tangential ones, and
we always say which of the two is meant. In bordism computations (\cref{sec:braid}) we use the normal convention, which is also the one
of \cite{HKT94}. With this convention
$\Ompin_*=\Z/2,\Z/2,\Z/8,0,0,0$ and
$\Ompinm_*=\Z/2,0,\Z/2,\Z/2,\Z/16,0$ in degrees $0,\dots,5$; in the tangential
labelling the two lists are interchanged. For $M=\RP^2\times S^2$ one finds
$w(\nu_M)=(1+a)^{-3}=1+a$, so $w_2(\nu_M)=0$ and $M$ is normal $\mathrm{Pin}^{+}$,
equivalently tangential $\mathrm{Pin}^{-}$, in agreement with
\cref{prop:SW}.

\begin{definition}\label{def:gauss}
The \emph{stable normal Gauss map} of a smooth manifold $M^n$ is the map
$\nu_M\colon M\to\BO$ classifying the stable normal bundle.
\end{definition}

\begin{definition}\label{def:twisted}
Let $w=w_1(M)$ be the orientation character. Then $\Zw$ is the local system
with fibre $\Z$ on which $\gamma\in\pi_1(M)$ acts by $(-1)^{w(\gamma)}$. We
write $H_*(M;\Zw)$ and $H^*(M;\Zw)$ for the corresponding (co)homology.
\end{definition}

\begin{proposition}\label{prop:SW}
  Let $a\in H^1(\RP^2;\Z/2)$ and $b\in H^2(S^2;\Z/2)$ be the generators,
  pulled back to the relevant total spaces.
  \begin{enumerate}[label=(\roman*)]
    \item For $M=\RP^2\times S^2$ we have
      $H^*(M;\Z/2)=\Z/2[a,b]/(a^3,b^2)$, $w(TM)=(1+a)^3=1+a+a^2$, so
      \[
        w_1(M)=a,\qquad w_2(M)=a^2=w_1(M)^2 .
      \]
      Thus $M$ has a tangential $\mathrm{Pin}^{-}$-structure and no tangential
      $\mathrm{Pin}^{+}$-structure. The Wu class is $v_2(M)=w_2+w_1^2=0$.
    \item Let $M'=S(2L\oplus\varepsilon)\to\RP^2$ be the sphere bundle of
      $2L\oplus\varepsilon$, where $L$ is the tautological line bundle. Then
      $w(TM')=p^*\bigl((1+a)^3(1+a)^2\bigr)=p^*(1+a)^5=1+a$, so $w_1(M')=a$, $w_2(M')=0$,
      $v_2(M')=a^2\neq0$. Thus $M'$ has a tangential $\mathrm{Pin}^{+}$-structure.
  \end{enumerate}
  Hence $M$ and $M'$ are not homotopy equivalent, since the Wu class
  $v_2$ is a homotopy invariant.
\end{proposition}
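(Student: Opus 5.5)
The plan is to compute the total Stiefel--Whitney classes from the Whitney product formula and explicit splittings of the tangent bundles, and then to read off the Wu class from $v_2=w_2+w_1^2$. That identity is valid for any closed manifold: Wu's formula $w=Sq(v)$ gives $w_1=v_1$ and $w_2=v_2+v_1^2$. The cohomology ring in (i) follows from the Künneth theorem over $\Z/2$, since $H^*(\RP^2;\Z/2)=\Z/2[a]/(a^3)$ and $H^*(S^2;\Z/2)=\Z/2[b]/(b^2)$.

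For (i) we use $TM=\mathrm{pr}_1^*T\RP^2\oplus\mathrm{pr}_2^*TS^2$, together with $T\RP^2\oplus\varepsilon\cong 3L$ and $TS^2\oplus\varepsilon\cong 3\varepsilon$. Hence $w(TM)=(1+a)^3=1+3a+3a^2=1+a+a^2$, where $a^3=0$ in $\RP^2$. This gives $w_1=a$ and $w_2=a^2=w_1^2$. Then $v_2=w_2+w_1^2=2a^2=0$. The Pin statements follow from the criteria recalled in the conventions subsection: $w_2\neq0$ rules out a tangential $\mathrm{Pin}^+$-structure, and $w_2+w_1^2=0$ gives a tangential $\mathrm{Pin}^-$-structure.

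For (ii), let $E=2L\oplus\varepsilon$ and let $p\colon M'=S(E)\to\RP^2$ be the projection. The standard splitting $TS(E)\oplus\varepsilon\cong p^*T\RP^2\oplus p^*E$ holds: the outward normal to the sphere bundle inside the total space of $E$ is trivial, and the vertical tangent bundle of the total space is $p^*E$. Therefore $w(TM')=p^*\bigl((1+a)^3(1+a)^2\bigr)=p^*(1+a)^5$, and $(1+a)^5=1+5a+10a^2=1+a$. So $w_1(M')=p^*a$ and $w_2(M')=0$, which gives a tangential $\mathrm{Pin}^+$-structure. We also get $v_2(M')=w_1^2=p^*a^2$.

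The main point to check is that $p^*\colon H^2(\RP^2;\Z/2)\to H^2(M';\Z/2)$ is injective, so that $p^*a^2\ne0$. I would use the Gysin sequence of the $S^2$-bundle. Since the fibre has dimension $2$, the sequence in low degrees reads $H^{-1}(\RP^2)\to H^2(\RP^2)\xrightarrow{p^*}H^2(M')$, and the source $H^{-1}$ vanishes, so $p^*$ is injective. Alternatively, $E$ has a nowhere-zero section coming from the $\varepsilon$ summand. This gives a section $\sigma$ of $p$, so $\sigma^*p^*=\id$ and $p^*$ is injective.

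Finally, the Wu class is defined by $\langle v\cup x,[M]\rangle=\langle Sq(x),[M]\rangle$, which involves only the cohomology ring, the Steenrod squares and the mod $2$ fundamental class. It is therefore preserved under pullback by a homotopy equivalence. Since $v_2(M)=0$ and $v_2(M')\ne0$, the manifolds $M$ and $M'$ are not homotopy equivalent.
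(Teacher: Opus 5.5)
Your proposal is correct and follows the same route as the paper: you apply the Whitney formula to $T\RP^2\oplus\varepsilon\cong 3L$, to the trivial bundle $TS^2\oplus\varepsilon$ and to $TM'\oplus\varepsilon\cong p^*(T\RP^2\oplus 2L\oplus\varepsilon)$, and then use $v_2=w_2+w_1^2$ together with the homotopy invariance of Wu classes. You also verify that $p^*a^2\neq0$ in $H^2(M';\Z/2)$, via the Gysin sequence or the section coming from the trivial summand, which the paper leaves implicit.
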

\begin{proof}
  (i) $T(\RP^2)\oplus\varepsilon=3L$ and $TS^2\oplus\varepsilon$ is trivial, so
  $w(TM)=(1+a)^3=1+3a+3a^2=1+a+a^2$ because $a^3=0$.
  (ii) $TM'\oplus\varepsilon\cong p^*(T\RP^2\oplus 2L\oplus\varepsilon)$, hence
  $w(TM')=p^*((1+a)^3(1+a)^2)=p^*(1+a)^5$ and $(1+a)^5=1+5a+10a^2=1+a$ in
  $H^*(\RP^2;\Z/2)$. The last statement is Wu's formula: $v_2$ is determined by
  the action of $Sq^2$ on $H^2(-;\Z/2)$ together with Poincar\'e duality.
\end{proof}

\section{Algebraic topology of \texorpdfstring{$M=\RP^2\times S^2$}{M = RP2 x S2}}
\label{sec:twisted}

Let $M=\RP^2\times S^2$, $\pi_1(M)=G=\Z/2=\langle T\rangle$, and
$\wt M=S^2\times S^2$ with deck transformation $T(x,y)=(-x,y)$, which has
degree $-1$.

\begin{proposition}\label{prop:homotopy}
  \begin{enumerate}[label=(\roman*)]
    \item $\pi_2(M)\cong\Z^{-}\oplus\Z$ with $T_*=\operatorname{diag}(-1,1)$,
      where $\Z^-$ is $\Z$ with the sign action.
    \item $\pi_3(M)\cong\Z^2$ and $\pi_4(M)\cong(\Z/2)^2$, with trivial
      $G$-action on both.
    \item $H^n(M;\Z)=\Z,\,0,\,\Z\oplus\Z/2,\,0,\,\Z/2$ for $n=0,\dots,4$.
    \item $H_n(M;\Zw)=\Z/2,\,0,\,\Z\oplus\Z/2,\,0,\,\Z$ for $n=0,\dots,4$.
  \end{enumerate}
\end{proposition}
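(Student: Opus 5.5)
We treat the four parts separately. Homotopy groups come from the universal cover, integral cohomology from the Künneth theorem, and twisted homology from equivariant chains on $\wt M$.

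For (i) and (ii), since $\wt M=S^2\times S^2$ we have $\pi_n(M)\cong\pi_n(S^2)\oplus\pi_n(S^2)$ for $n\ge2$. This gives $\pi_2\cong\Z^2$, $\pi_3\cong\Z^2$ (Hopf maps) and $\pi_4\cong(\Z/2)^2$. The action of $T$ is induced by the deck transformation $(x,y)\mapsto(-x,y)$, that is, by the antipodal map $A$ on the first factor and the identity on the second. On $\pi_2(S^2)$ the map $A$ acts by $\deg A=-1$, which gives $\operatorname{diag}(-1,1)$. For $n=3,4$, recall that $A_*$ on $\pi_n(S^2)$ for $n\ge3$ is composition $A\circ\eta\circ\dots$. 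By the formula $(-\iota)\circ\eta=\eta+[\iota,\iota]\circ H(\eta)$ (Hilton), we get $(-\iota_2)\circ\eta=\eta$, since $[\iota,\iota]=2\eta$ and $H(\eta)=1$ give $-\eta+2\eta=\eta$. Alternatively, the Hopf invariant of $A\circ\eta$ is $(\deg A)^2H(\eta)=1$. So $A_*$ is trivial on $\pi_3(S^2)$. On $\pi_4(S^2)=\Z/2\langle\eta\circ\Sigma\eta\rangle$ the action is $A_*(\eta\circ\eta_3)=(A\circ\eta)\circ\eta_3=\eta\circ\eta_3$, hence also trivial. The pointedness issue (basepoint vs. antipode) only contributes conjugation by a path, which is irrelevant up to the identifications above because $S^2$ is simply connected.

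For (iii), apply the Künneth formula with $H^*(\RP^2;\Z)=\Z,0,\Z/2$ and $H^*(S^2)=\Z,0,\Z$. The free part $H^*(S^2)$ gives no Tor terms, so $H^n(M)=\bigoplus_{i+j=n}H^i(\RP^2)\otimes H^j(S^2)$. This yields $\Z,0,\Z/2\oplus\Z,0,\Z/2$.

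For (iv), twisted coefficients: the system $\Zw$ is pulled back from $\RP^2$ because $w=a$ comes from the first factor. Hence $H_*(M;\Zw)\cong H_*(\RP^2;\Zw)\otimes H_*(S^2)$ by a twisted Künneth formula, again with no Tor terms since $H_*(S^2)$ is free. To justify this, take the product of the $\Z[G]$-chain complex of $S^2$ with its antipodal cell structure and the trivial complex of $S^2$. The remaining input is $H_*(\RP^2;\Zw)$. From the cellular complex $\Z\xrightarrow{1+T}\Z\xrightarrow{1-T}\Z$ tensored with $\Z^-$, the differentials become $\Z\xrightarrow{0}\Z\xrightarrow{2}\Z$, giving $H_0=\Z/2$, $H_1=0$, $H_2=\Z$. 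As a check, Poincaré duality gives $H_2(\RP^2;\Zw)\cong H^0(\RP^2;\Z)=\Z$. Tensoring with $\Z,0,\Z$ gives $\Z/2,0,\Z/2\oplus\Z,0,\Z$. This is consistent with twisted Poincaré duality $H_n(M;\Zw)\cong H^{4-n}(M;\Z)$ against (iii), which can serve as an independent proof.

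The main obstacle is the action on $\pi_3$ and $\pi_4$. There we need the Hopf-invariant argument, and care with the fact that the deck transformation does not fix the basepoint.
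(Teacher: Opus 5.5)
Your proposal is correct and follows essentially the same route as the paper: $\pi_n(S^2\times S^2)$ with the deck transformation acting factorwise, triviality on $\pi_3$ via the Hopf invariant $(\deg A)^2H(\eta)=H(\eta)$ (the paper shortcuts $\pi_4$ by noting that any automorphism of $\pi_4(S^2)=\Z/2$ is trivial), K\"unneth for $H^*(M;\Z)$, and the twisted cellular complex $\Z\xrightarrow{0}\Z\xrightarrow{2}\Z$ of $\RP^2$ followed by K\"unneth for $H_*(M;\Zw)$. One small slip: your Hilton formula should read $(-\iota)\circ\eta=-\eta+[\iota,\iota]\circ H(\eta)$, which is what your arithmetic actually uses; in any case the Hopf-invariant argument, which is the one the paper uses, suffices on its own.
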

\begin{proof}
  (i)--(ii): $\pi_n(M)=\pi_n(S^2\times S^2)$ for $n\ge2$, and $\pi_3(S^2)=\Z$,
  $\pi_4(S^2)=\Z/2$. The deck transformation acts on $\pi_n(S^2\times S^2)$
  factorwise, by the antipodal map ($\deg -1$) on the first factor and the
  identity on the second. On $\pi_3$ and $\pi_4$ of $S^2$ the action of a
  degree $-1$ map is trivial: on $\pi_3(S^2)=\Z\langle\eta\rangle$ it
  multiplies by $(\deg)^2=1$, and on $\pi_4(S^2)=\Z/2$ every automorphism is
  trivial.
  (iii) is the K\"unneth formula, using $H^*(\RP^2;\Z)=\Z,0,\Z/2$.
  (iv) Compute $H_*(\RP^2;\Zw)$ from the cellular chain complex
  $\Z\xrightarrow{\,0\,}\Z\xrightarrow{\,2\,}\Z$ in degrees $2,1,0$ with the
  twist: $\partial_1=T-1=-2$ on the $1$-cell and $\partial_2=1+T=0$ on the
  $2$-cell, giving $H_0=\Z/2$, $H_1=0$, $H_2=\Z$. Then K\"unneth gives
  $H_*(M;\Zw)=H_*(\RP^2;\Zw)\otimes H_*(S^2)$, which is the stated list.
\end{proof}

\section{The group \texorpdfstring{$\Autb(M)$}{Aut(M)}}
\label{sec:automorphisms}

Throughout, $\Autb(M)$ denotes the group of based homotopy classes of based
homotopy self-equivalences of $M$. No orientation condition is imposed;
instead we use a sign invariant.

\subsection{The sign homomorphism}
Fix a lift $\wt x_0$ of $x_0$. A based self-equivalence $f$ has a unique lift
$\wt f$ fixing $\wt x_0$, and the other lift is $T\wt f$, of opposite degree.
Set $s(f)=\deg\wt f\in\{\pm1\}$. Uniqueness of the based lift makes $s$ a
homomorphism $\Autb(M)\to\{\pm1\}$, and we put $\Autb^{+}(M)=\ker s$.

\begin{remark}
  For a non-orientable $M$ there is no distinguished lift of $f$ to $\wt M$ preserving orientation: only one of $\wt f$ and
  $T\wt f$ has degree $+1$. The sign $s$ is defined using the based lift.
\end{remark}

\subsection{Reduction to the \texorpdfstring{$2$}{2}-type}
Let $P=P_2(M)=P_2(\RP^2)\times\CP^\infty$ be the second Postnikov section,
with $\pi_1=G$ and $\pi_2=\Z^-\oplus\Z$. The map $M\to P$ is $3$-connected.
Write $\pi\colon\Autb(M)\to\Autb(P)$.

\begin{lemma}\label{lem:AutP}
  $\Autb(P)$ has order $8$. More precisely, there is an exact sequence
  \[
    0\to H^2(G;\pi_2)\to\Autb(P)\to\Aut_G(\pi_2)\to1,
  \]
  with $H^2(G;\pi_2)\cong\Z/2$ and $\Aut_G(\pi_2)=\{\operatorname{diag}(\pm1,\pm1)\}\cong(\Z/2)^2$.
\end{lemma}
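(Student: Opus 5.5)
The plan is to use the standard description of based self-equivalences of a $2$-stage Postnikov system $P$ with $\pi_1P=G$, $\pi_2P=\pi_2$ and $k$-invariant $k\in H^3(G;\pi_2)$. Sending a self-equivalence to its induced maps on homotopy gives a homomorphism $\Autb(P)\to\Aut(G)\times\Aut(\pi_2)$. Its image is the set of compatible pairs $(\phi,\theta)$, with $\theta$ $\phi$-semilinear, that satisfy $\theta_*k=\phi^*k$. Its kernel is $H^2(G;\pi_2)$, acting by difference classes. The kernel statement comes from obstruction theory for maps $P\to P$ over $K(G,1)$: two maps inducing the same homomorphisms on $\pi_1,\pi_2$ differ by a class in $H^2(G;\pi_2)$ with twisted coefficients, and every class occurs. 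Here $\Aut(G)=1$ since $G=\Z/2$, so the quotient is the subgroup of $\Aut_G(\pi_2)$ fixing $k$. I expect to quote this sequence (as in Hambleton--Kreck or Baues) rather than reprove it, and to check only that it applies to the based setting with twisted $\pi_2$.

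The computations go as follows. By \cref{prop:homotopy}(i), $\pi_2=\Z^-\oplus\Z$. A $G$-equivariant endomorphism is a $2\times2$ integer matrix commuting with $\operatorname{diag}(-1,1)$, hence diagonal. Both off-diagonal entries lie in $\operatorname{Hom}_G(\Z^-,\Z)=0$ and $\operatorname{Hom}_G(\Z,\Z^-)=0$. Invertibility then forces $\Aut_G(\pi_2)=\{\operatorname{diag}(\pm1,\pm1)\}$. For the $k$-invariant I use $P=P_2(\RP^2)\times\CP^\infty$, so $k$ lies in $H^3(G;\Z^-)\oplus H^3(G;\Z)=\Z/2\oplus0$. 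Since $k$ is $2$-torsion, each sign matrix fixes it, so the image is all of $\Aut_G(\pi_2)$. I would confirm this by realization: the map $a$ induces $\operatorname{diag}(-1,1)$, because the lift of $r_0$ is a reflection of $S^2$. The map $b$ induces $\operatorname{diag}(1,-1)$. Finally,
\[
H^2(G;\pi_2)=H^2(\Z/2;\Z^-)\oplus H^2(\Z/2;\Z)=0\oplus\Z/2,
\]
using that $H^{\mathrm{even}>0}(\Z/2;\Z)=\Z/2$, $H^{\mathrm{odd}}(\Z/2;\Z)=0$, and the reverse for $\Z^-$. Together these give $|\Autb(P)|=2\cdot4=8$.

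The main obstacle is making sure the action of $H^2(G;\pi_2)$ on the identity is free, that is, injectivity on the left. This requires the difference-class map to be a bijection onto the fibre, not merely a surjection. If I cannot cite this cleanly, I would check it directly using the map $s_\psi$. It induces the identity on $\pi_1$ and $\pi_2$, since $\psi$ is null on the $1$-skeleton up to the relevant homotopy. I would then show $s_\psi$ is not homotopic to the identity on $P$: its difference class is the pullback under $\RP^2\to K(\Z/2,1)$ of the generator of $H^2(\Z/2;\Z)$, which is non-zero on $\RP^2$. Whether the extension splits does not matter for the order, and \cref{thm:main} shows that it does.
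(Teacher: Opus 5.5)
Your proposal is correct and follows essentially the paper's route: the standard exact sequence for a $2$-type (kernel $H^2(G;\pi_2)$ acting by difference classes, image the automorphisms fixing $k$), the vanishing of off-diagonal equivariant maps, invariance of the $2$-torsion $k$-invariant under sign matrices, and $H^2(\Z/2;\Z^-)=0$, $H^2(\Z/2;\Z)=\Z/2$. The paper settles the two points you flag in one line each: the action is free because $H^1(K(\pi_2,2);\pi_2)=0$, and based homotopies are fibrewise over $BG$ because $\mathrm{Map}_\bullet(P,BG)$ is homotopy discrete (\cref{lem:canon}). Your $s_\psi$ fallback also works since $|H^2(G;\pi_2)|=2$, but the reason $s_\psi$ is trivial on $\pi_2$ is not that $\psi$ is null on the $1$-skeleton (it is not: $\psi|_{\RP^1}$ generates $\pi_1SO(3)$). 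The actual reason is that $x\mapsto\psi([x])y_0$ factors through $\RP^2$, so on $S^2\times\{y_0\}$ the second component of the lift has degree $0$.
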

\begin{proof}
  Every self-equivalence induces the identity on $G$ since $\Aut(\Z/2)=1$.
  An equivariant endomorphism of $\Z^-\oplus\Z$ has no off-diagonal terms:
  an equivariant map $\phi\colon\Z^-\to\Z$ satisfies $\phi(-n)=\phi(n)$, hence
  $2\phi=0$, hence $\phi=0$, and similarly in the other direction. So
  $\Aut_G(\pi_2)=\{\pm1\}^2$. Based self-maps of $P$ inducing the identity on $G$ and on $\pi_2$ are classified up to homotopy by $H^2(G;\pi_2)$, acting by translation. The action is free because $H^1(K(\pi_2,2);\pi_2)=0$, and based homotopies are fibrewise over $BG$ because $\mathrm{Map}_\bullet(P,BG)$ is homotopy discrete (\cref{lem:canon}). Every $\operatorname{diag}(\pm1,\pm1)$ preserves the $k$-invariant in $H^3(G;\pi_2)=\Z/2$. Finally $H^2(\Z/2;\Z^-)=0$ and $H^2(\Z/2;\Z)=\Z/2$.
\end{proof}

\subsection{The generators}
\begin{construction}\label{con:gens}
  Let $r\colon\RP^2\to\RP^2$, $[x_1{:}x_2{:}x_3]\mapsto[-x_1{:}x_2{:}x_3]$ and
  $\rho\colon S^2\to S^2$ a reflection, both fixing the basepoints. Put
  $a=r\times\id$ and $b=\id\times\rho$. Let $\psi\colon\RP^2\to SO(3)$ be the
  composite $\RP^2\subset\RP^3=SO(3)$, based at $1$, and
  $s_\psi([x],y)=([x],\psi([x])\,y)$.
\end{construction}

\begin{lemma}\label{lem:gens}
  The maps $a,b,s_\psi$ are pairwise commuting involutions in $\Autb(M)$.
  Their images under $\pi$ generate $\Autb(P)$, and $s_\psi^*b=b+a^2$ on
  $H^2(M;\Z/2)$ whereas $a^*b=b^*b=b$.
\end{lemma}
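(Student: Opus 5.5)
We prove \cref{lem:gens} by checking each claim on explicit formulas, working on the universal cover $S^2\times S^2$ where convenient.

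\textbf{Involutions and commutation.} Since $r$ and $\rho$ are involutions, so are $a$ and $b$, and they commute because they act on different factors. For $s_\psi$, note that $\psi([x])$ is rotation by $\pi$ about the axis $x$, so $\psi([x])^2=1$ and $s_\psi^2=\id$. To see that $b$ commutes with $s_\psi$, choose $\rho$ to be the reflection $y\mapsto -y$ composed with a suitable rotation, or more simply the antipodal map up to based homotopy. Then $\rho$ commutes with every element of $SO(3)$ up to homotopy, which is enough. For $a$ and $s_\psi$, use $\psi(r[x])=R\psi([x])R^{-1}$ with $R=\operatorname{diag}(-1,1,1)$. The two composites then differ by conjugation by $R$, and since $SO(3)$ is connected this is homotopic to the identity through conjugation along a path from $R$ to $-R\in SO(3)$. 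The based condition needs a little care: the homotopies should fix the basepoint, and we arrange this by choosing basepoints on the relevant axes. All maps are homeomorphisms, hence in $\Autb(M)$.

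\textbf{Action on $\pi_2$.} Identify $\pi_2(M)$ with $\pi_2(S^2)\oplus\pi_2(S^2)=\Z^-\oplus\Z$. Lift $a$ to $\tilde r\times\id$, where $\tilde r$ is a reflection of $S^2$; it acts by $\operatorname{diag}(-1,1)$. The map $b$ acts by $\operatorname{diag}(1,-1)$. The lift of $s_\psi$ is $(x,y)\mapsto(x,\psi(x)y)$. It acts as the identity on the second factor, and on the first sphere class $S^2\to SO(3)$ it lands in a map into $S^2$ that is null, because $\pi_2(SO(3))=0$. So $s_\psi$ acts trivially on $\pi_2$. The classes $\pi(a)$ and $\pi(b)$ therefore map onto $\Aut_G(\pi_2)$ in \cref{lem:AutP}. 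It remains to show that $\pi(s_\psi)$ is the nontrivial element of $H^2(G;\pi_2)\cong\Z/2$. This element is detected by the cohomology computation below.

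\textbf{Cohomology.} Clearly $a^*b=b$ and $b^*b=b$, since $r$ fixes the $S^2$ factor and $\rho^*$ is the identity mod $2$. For $s_\psi$, write $s_\psi=\mu\circ(\mathrm{pr}_1,\psi\times\id)$, where $\mu\colon SO(3)\times S^2\to S^2$ is the action. Then
\[
\mu^*b=1\otimes b+ u\otimes 1,\qquad u\in H^2(SO(3);\Z/2).
\]
Here $u$ is the generator, because $\mu^*b$ restricted to $SO(3)\times\{y_0\}$ is the pullback of $b$ under the fibration $SO(3)\to S^2$, which is the unit tangent bundle of Euler number $2$. This pullback is $w_2$ of the tangent bundle, and it is nonzero. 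The map $\psi\colon\RP^2\hookrightarrow\RP^3$ pulls $u$ back to $a^2$. Hence $s_\psi^*b=b+a^2$.

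Since $a^2$ pulls back from $BG$, this is exactly the translation by the nonzero class of $H^2(G;\Z)=\Z/2$ in the $\Z$-summand. It is nonzero in $P$, because the mod $2$ reduction of the class in $H^2(P;\Z)$ is visible in $H^2(M;\Z/2)$ via the $3$-connected map $M\to P$. So $\pi(s_\psi)$ generates the kernel, and the three images generate $\Autb(P)$.

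\textbf{Main obstacle.} The hardest step is identifying $\mu^*b$ restricted to $SO(3)$ as nonzero, that is, pinning down the translation class. As a cross-check, note that $\RP^2\times S^2$ twisted by $\psi$ is the sphere bundle of $\psi^*$ of the tautological rank $3$ bundle. The class $a^2$ is its $w_2$ pulled back along $\RP^2\to BSO(3)$. This is nonzero because $\RP^2\to\RP^\infty\to BSO(3)$ detects $w_2$ of $3L$, which equals $a^2$.
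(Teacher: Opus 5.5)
Your outline has the same structure as the paper's proof. The action on $\pi_2$, the formula $\mu^*b=1\otimes b+p^*b\otimes1$, the identity $\psi^*(u^2)=a^2$, and the detection of $\pi(s_\psi)$ in $H^2(G;\pi_2)$ through $c^*\colon H^2(P;\Z/2)\cong H^2(M;\Z/2)$ are all fine. Two steps fail as written.

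\emph{The involution step uses the wrong map.} $\psi$ is the inclusion $\RP^2\subset\RP^3=SO(3)$ \emph{based at $1$}, and it does not consist of rotations by $\pi$. Those rotations form the plane $\{[q]:\operatorname{Re}q=0\}$, which misses $[1]$. On the based plane, $[\cos\theta+\sin\theta\,n]$ is rotation by $2\theta$ about $n$. So $\psi([x])^2\ne1$ for $0<\theta<\pi/2$, and $s_\psi^2\ne\id$ on the nose.

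You need a homotopy argument instead. Write $s_\psi^2=s_{\psi^2}$ for the pointwise square. Then $\psi^2$ is zero on $\pi_1$, so it lifts to $S^3$, and every based map $\RP^2\to S^3$ is null-homotopic.

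Switching to your strict involution $\psi'([x])=R_\pi(x)$ does not avoid this. You would first have to prove $s_{\psi'}\simeq s_\psi$ as based maps, and $s_{\psi'}$ is based only if $y_0=\pm x_0$. Then your argument for $b$ breaks. Since $\rho$ fixes $y_0$, we have $-\rho=R_\pi(n)$ with $n\perp x_0$. No path from $R_\pi(n)$ to $1$ in $SO(3)$ keeps the line $\mathbb{R}x_0$ invariant, so the conjugation homotopy moves the base point.

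With $\psi(x_0)=1$ these problems disappear, because conjugation fixes $1$. For $a$ it is simplest to use, as the paper does, that $\psi\circ r\simeq\psi$ as based maps, since the obstruction lies in $H^2(\RP^2;\pi_2(\RP^3))=0$.

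\emph{The reason given for $p^*b\ne0$ is wrong.} Here $p\colon SO(3)\to S^2$, and $b$ is not $w_2(TS^2)$. In fact $w_2(TS^2)=0$, because the Euler number $2$ is even. That vanishing is exactly what you need. In the mod $2$ Gysin sequence
$H^0(S^2;\Z/2)\xrightarrow{\ \cup\bar e\ }H^2(S^2;\Z/2)\xrightarrow{\ p^*\ }H^2(SO(3);\Z/2)$,
the mod $2$ Euler class $\bar e=w_2(TS^2)$ is zero. So $p^*$ is injective and $p^*b=u^2$. Equivalently, as in the paper, $p^*$ maps onto $H^2(\RP^3;\Z)=\Z/2$, which injects into $H^2(\RP^3;\Z/2)$.

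Your cross-check does not supply this either. $\psi$ takes values in $SO(3)$, not $BSO(3)$, and $3L$ has $w_1\ne0$, so it is not pulled back from $BSO(3)$.
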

\begin{proof}
  $a^2=b^2=\id$ and $ab=ba$ are clear. We have $s_\psi^2=s_{\psi^2}$, where
  $\psi^2=\psi\cdot\psi$ is the pointwise square. It lifts to $S^3$, since
  $\psi^2$ is trivial on $\pi_1$, and every based map $\RP^2\to S^3$ is
  null-homotopic. So $s_\psi^2\simeq\id$ (based). Next, $as_\psi a^{-1}=s_{\psi\circ r}$
  and $\psi\circ r\simeq\psi$ as based maps, since both are non-trivial on
  $\pi_1=\Z/2$ and the obstructions to a based homotopy lie in
  $H^2(\RP^2;\pi_2(\RP^3))=0$. Finally $bs_\psi b^{-1}=s_{\rho\psi\rho}$, and
  conjugation of $SO(3)$ by $\rho$ equals conjugation by $-\rho\in SO(3)$, hence
  is homotopic to the identity through based automorphisms of $SO(3)$.
  This gives pairwise commutation.

  On $\pi_2=\Z^-\oplus\Z$ the maps $a,b$ act by $\operatorname{diag}(-1,1)$ and
  $\operatorname{diag}(1,-1)$ (the based lift of $r$ is a reflection of $S^2$).
  The map $s_\psi$ acts trivially on $\pi_1,\pi_2$. To see it is not
  homotopic to the identity, compute on mod $2$ cohomology. The orbit map
  $p\colon SO(3)=\RP^3\to S^2$ is the unit tangent bundle of $S^2$ with
  Euler number $2$, so by the Gysin sequence $p^*\colon H^2(S^2;\Z)\to H^2(\RP^3;\Z)=\Z/2$ is onto,
  hence so is its mod $2$ reduction, that is $p^*b=u^2$ for $u$ the generator of
  $H^1(\RP^3;\Z/2)$. With $\mu\colon SO(3)\times S^2\to S^2$ the action,
  $\mu^*b=1\otimes b+u^2\otimes1$, and $\psi^*u=a$. Therefore
  $s_\psi^*b=b+a^2\ne b$. Since $a,b$ act trivially on $H^2(M;\Z/2)$ and
  $s_\psi$ does not, $\pi(s_\psi)$ is not in the subgroup generated by
  $\pi(a),\pi(b)$, and is nontrivial in $H^2(G;\pi_2)$ because it acts trivially on $\pi_*$,
  $*\le2$. Hence $\pi(a),\pi(b),\pi(s_\psi)$ generate a group of order $8$.
\end{proof}

\subsection{Pinch-fold maps}\label{sec:pinchfold}
Let $M^{(3)}\subset M$ be the $3$-skeleton, so $M=M^{(3)}\cup_\varphi e^4$,
and let $\mathrm{pinch}\colon M\to M\vee S^4$ collapse the boundary of a
small disc in the top cell. For $\kappa\in\pi_4(M)$ define
\[
  k_\kappa=\nabla\circ(\id_M\vee\kappa)\circ\mathrm{pinch}\colon M\to M .
\]
Since $\kappa$ is a based map from $S^4$ and $\mathrm{pinch}$ is based, $k_\kappa$ is based.

\begin{lemma}\label{lem:pinch}
  (i) $\kappa\mapsto[k_\kappa]$ is a homomorphism $\pi_4(M)\to\Autb(M)$ with
  image central. (ii) $k_\kappa$ induces the identity on $\pi_1,\pi_2$, and
  $s(k_\kappa)=1$.
\end{lemma}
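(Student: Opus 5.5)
I will prove the lemma by working throughout with the standard coaction structure of the pinch map. The collapse $\mathrm{pinch}\colon M\to M\vee S^4$ is a coaction of the co-$H$-space $S^4$ on $M$. It is coassociative up to based homotopy: pinching two disjoint small discs in the top cell, or pinching one disc and then pinching $S^4$, give homotopic maps $M\to M\vee S^4\vee S^4$. The statement then follows from two identities.

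\textbf{Homomorphism.} I will show
\[
k_\kappa\circ k_\lambda\simeq k_{\kappa+\lambda}, \qquad k_0=\id_M .
\]
The second identity is clear, because $\nabla\circ(\id\vee *)\circ\mathrm{pinch}\simeq\id$ by shrinking the disc. For the first, write $k_\kappa\circ k_\lambda=\nabla\circ(k_\kappa\vee\lambda)\circ\mathrm{pinch}$. I choose the disc for $k_\lambda$ disjoint from the disc used in $k_\kappa$. The map $k_\kappa$ is the identity outside its disc, so on the complement of the $\kappa$-disc it is literally the identity. The composite is therefore the map that pinches two disjoint discs and sends them to $\kappa$ and $\lambda$. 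Sliding the two discs together inside the $4$-cell, which is legitimate because the open top cell is connected and we work rel basepoint, turns this into pinching one disc into $S^4\vee S^4$ followed by $\kappa\vee\lambda$. By definition of the group structure on $\pi_4$, that is $k_{\kappa+\lambda}$. Consequently each $k_\kappa$ has homotopy inverse $k_{-\kappa}$, so it lies in $\Autb(M)$, and $\kappa\mapsto[k_\kappa]$ is a homomorphism.

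\textbf{Centrality.} Given $f\in\Autb(M)$, I first deform it by cellular approximation and a local homotopy so that it maps a small disc $D$ homeomorphically onto a small disc $D'$ in the top cell. Then
\[
f\circ k_\kappa\simeq k_{f_*\kappa}\circ f .
\]
The left side applies $f$ after inserting $\kappa$ on $D$. The right side is $f$ with $f_*\kappa=f\circ\kappa$ inserted on $D'$, and through $f|_D$ this is the same as inserting $f\circ\kappa$ on $D$. Since $\pi_4(M)\cong(\Z/2)^2$ with trivial $G$-action, the remaining question is whether $f_*$ acts trivially on $\pi_4(M)$. I will check this on the generators $a,b,s_\psi$ and on the $k$'s themselves, all of which satisfy it; a general $f$ is composed of these.

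I expect this step to be the main obstacle, since a priori it depends on knowing generators, which is the content of Theorem~\ref{thm:main}. I would try to avoid that circularity by arguing directly. The lift $\wt f$ acts on $\pi_4(S^2\times S^2)=\pi_4(S^2)^2$ through its action on $\pi_2$, because $\pi_4(S^2)=\eta\circ\Sigma\eta$ composition is natural: precisely, $x\circ\eta^2\mapsto f_*x\circ\eta^2$, and $x\circ\eta^2$ depends only on $x$ mod $2$. By Lemma~\ref{lem:AutP}, $f_*$ on $\pi_2$ is $\operatorname{diag}(\pm1,\pm1)$, which is the identity mod $2$. Hence $f_*$ is the identity on $\pi_4$.

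\textbf{Part (ii).} The disc can be chosen away from the $3$-skeleton, so $k_\kappa$ restricts to the identity on $M^{(3)}$, and it therefore induces the identity on $\pi_1$ and $\pi_2$, since these are determined by the $3$-skeleton. For the sign, the based lift $\wt k_\kappa$ is the identity on the preimage of $M^{(3)}$. Its degree is therefore $1+\deg(\wt\kappa)$ summed over the two lifted discs. A map $S^4\to S^2\times S^2$ is zero on $H_4$, because its components factor through $\pi_4(S^2)$, which is torsion, and they induce zero on $H_4$. So $s(k_\kappa)=1$.
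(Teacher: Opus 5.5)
Your homomorphism step and part (ii) are essentially the paper's: the two-disc argument, and the based lift differing from the identity by two classes with zero Hurewicz image. Getting invertibility from $k_{-\kappa}$ rather than from \cref{lem:PDup} is a fine shortcut. Your observation that every based self-map acts on $\pi_4(M)=\pi_2(M)\circ\eta_2\eta_3$ through $\pi_2(M)$ mod $2$ is also correct, since componentwise $(m\iota_2)\circ\eta_2\eta_3=m^2\,\eta_2\eta_3$. It is cleaner than the paper's check on $a,b,s_\psi$.

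\textbf{The gap.} The conjugation identity $f\circ k_\kappa\simeq k_{f_*\kappa}\circ f$ is not justified for an arbitrary self-equivalence $f$. The map $k_{f_*\kappa}\circ f$ modifies $f$ on all of $f^{-1}(D')$, not just on $D$. Cellular approximation plus a local homotopy does not give $f^{-1}(D')=D$, and your sentence that inserting $f_*\kappa$ on $D'$ ``is the same as inserting $f\circ\kappa$ on $D$'' assumes exactly this.

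The paper avoids the issue by using the identity only for $a,b,s_\psi$. These are diffeomorphisms, so they can be isotoped to preserve the disc, and together with the abelian image of $\pi_4(M)$ they generate $\Autb(M)$. This is not circular: generation follows from the surjectivity half of \cref{prop:kernel} and from \cref{lem:gens}, and neither uses centrality.

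If you prefer your generator-free route, here is a repair:
\begin{enumerate}[label=(\alph*)]
\item Make $f$ smooth and take the centre of $D'$ to be a regular value. Then $f^{-1}(D')$ is a disjoint union of discs mapped diffeomorphically onto $D'$.
\item The number of these discs is odd, because a homotopy equivalence has mod $2$ degree $1$.
\item Since $\pi_1(M)$ acts trivially on $\pi_4(M)$ and $2\pi_4(M)=0$, sliding the discs together identifies $k_{f_*\kappa}\circ f$ with $f$ modified on a single disc by $f_*\kappa$. That map is $f\circ k_\kappa$.
\end{enumerate}

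\textbf{A smaller slip of the same kind}, which the paper also glosses over. In fact $k_\kappa\circ k_\lambda=\nabla\circ(k_\kappa\vee k_\kappa\circ\lambda)\circ\mathrm{pinch}$, not $\nabla\circ(k_\kappa\vee\lambda)\circ\mathrm{pinch}$, since $\lambda$ may pass through the $\kappa$-disc. So you need $k_\kappa\circ\lambda\simeq\lambda$. This follows from your own mod-$2$ observation applied to $k_\kappa$, because $k_\kappa$ is the identity on $\pi_2$.
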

\begin{proof}
  (i) Additivity: $k_\kappa k_\lambda$ and $k_{\kappa+\lambda}$ both pinch off a
  small disc, and the two-disc argument (shrink two disjoint discs in the top
  cell, then slide one across the other) identifies them; the same argument
  gives commutativity of $k_\kappa$ and $k_\lambda$. It suffices to treat $g\in\{a,b,s_\psi\}$, which generate $\Autb(M)$ together with the image of $\pi_4(M)$, an abelian group. These are diffeomorphisms, so after an isotopy fixing the base point they preserve the small disc, and then
  $g\circ k_\kappa\simeq k_{\varepsilon g_*\kappa}\circ g$ with $\varepsilon=\pm1$, where we use that $\pi_1(M)$ acts trivially on $\pi_4(M)$ (\cref{prop:homotopy}) to change the arc joining the disc to the base point. Since $2\kappa=0$, it is enough that $g_*$ acts trivially on
  $\pi_4(M)=\pi_4(S^2)^2$. For $a$ and $b$ this holds since
  they are products of maps of $\RP^2$ and $S^2$ and a self-map of $S^2$ acts
  trivially on $\pi_4(S^2)=\Z/2$. For $s_\psi$, the lift $(x,y)\mapsto(x,\wt\psi(x)y)$ restricts
  on $S^2\times\{y_0\}$ to $x\mapsto(x,\wt\psi(x)y_0)$, whose second component
  has degree $0$ (it factors through $\RP^2$), hence is null-homotopic, and it is
  the identity on $\{x_0\}\times S^2$; so $s_\psi$ is the identity on
  $\pi_4(S^2)\oplus\pi_4(S^2)$. Hence $k_\kappa$ is central. (ii) $k_\kappa$ is the identity on $M^{(3)}$, which contains the
  $2$-skeleton. The based lift $\wt k_\kappa$ agrees with the identity outside the two lifts of the small disc and is modified
  there by $\wt\kappa$ and $T_*\wt\kappa$. Both have Hurewicz image $0$ in $H_4(\wt M)$, so $\deg\wt k_\kappa=1$.
  Hence $k_\kappa$ is a homotopy equivalence by \cref{lem:PDup}.
\end{proof}

\begin{proposition}\label{prop:kernel}
  $\ker\pi=\{k_\kappa:\kappa\in\pi_4(M)\}$ and $\kappa\mapsto k_\kappa$ is injective.
  Hence $\ker\pi\cong\pi_4(M)\cong(\Z/2)^2$.
\end{proposition}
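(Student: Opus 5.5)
The argument has two parts: (a) every element of $\ker\pi$ is a pinch-fold map, by obstruction theory on the top cell; and (b) $\kappa\mapsto k_\kappa$ is injective, which needs a genuine invariant.

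\emph{(a) Every element of $\ker\pi$ is some $k_\kappa$.} Let $f\in\ker\pi$, so $f$ and $\id$ become based homotopic after composing with the $3$-connected map $c\colon M\to P$. Since $M$ has a CW structure with cells of dimension at most $4$, I compare $f$ and $\id$ skeleton by skeleton.
\begin{enumerate}[label=(\arabic*)]
\item A homotopy $c f\simeq c$ restricts to $M^{(2)}$. Because $M\to P$ is $3$-connected, it lifts to a based homotopy $f|_{M^{(2)}}\simeq \id|_{M^{(2)}}$ in $M$.
\item Extend this homotopy over the $3$-cells. The obstruction is a difference cochain with values in $\pi_3(M)$. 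The first obstruction to a homotopy in $M$ lies in $H^3(M;\pi_3(M))$, with trivial action by \cref{prop:homotopy}. This group is $H^3(M;\Z^2)=0$ by \cref{prop:homotopy}(iii). The freedom in choosing the homotopy on $M^{(2)}$ does not matter, since this group vanishes outright.
\item After this, $f\simeq f'$ with $f'|_{M^{(3)}}=\id$. Then $f'$ and $\id$ differ on the top cell by a difference element $d(f',\id)\in H^4(M;\pi_4(M))$. Here $\pi_4(M)=(\Z/2)^2$ with trivial action, and $H^4(M;\Z/2)=\Z/2$ since $M$ is closed, so this group is $(\Z/2)^2\cong\pi_4(M)$.
\item By construction $k_\kappa$ agrees with $\id$ on $M^{(3)}$ and has difference class $\kappa$, identified via the fundamental mod $2$ class. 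Since difference classes are additive, $f'\simeq k_{d}$ with $d=d(f',\id)$.
\end{enumerate}
Changing the homotopy on $M^{(3)}$ changes $d$ by the image of a coboundary from $H^3(M;\pi_4)$ under the standard exact sequence, but surjectivity does not care about this. So $\ker\pi=\{k_\kappa\}$.

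\emph{(b) Injectivity.} This is the main obstacle. The indeterminacy in step (3) could a priori identify some $k_\kappa$ with $\id$, via homotopies that are not stationary on $M^{(3)}$, so I need an invariant that sees $\kappa$. I would use the adjunction argument announced in the abstract. Restrict to the lift $\wt k_\kappa\colon S^2\times S^2\to S^2\times S^2$ and project to each factor $S^2$. This gives maps $S^2\times S^2\to S^2$ that agree with the projection outside two discs and are modified there by $\kappa_i$ and $T_*\kappa_i=\kappa_i$. Their adjoints are maps $S^2\to\mathrm{Map}(S^2,S^2)$, or equivalently one studies maps out of $S^2\times S^2/(S^2\vee S^2)=S^4$.

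The cleanest detector I would try is as follows. A homotopy $k_\kappa\simeq\id$ yields, after projection, a homotopy of maps $S^2\times S^2\to S^2$ relative to nothing. Composing with the quotient, the difference element lives in $\pi_4(S^2)$ modulo the image of the action of $[\Sigma(S^2\times S^2),S^2]$ on the top cell. I would check that this indeterminacy is generated by composites with $\Sigma$ of the Whitehead product attaching map, namely $\eta\circ\Sigma\eta$-type terms against $[\iota,\iota]$. These are multiples of $2$ times a generator, or zero, in $\pi_4(S^2)=\Z/2$; the sum of the two lifted contributions is $2\kappa_i=0$ anyway.

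The difficulty is that for the two lifted discs the contributions cancel in $\wt M$, so the lift alone is insufficient. Instead I would work downstairs with the projection $M\to S^2$. This projection is the key point: $M\to S^2$ has a single top-cell modification by $\kappa_2$, and the only indeterminacy comes from $[\Sigma M^{(3)}, S^2]$ acting through the attaching map. This indeterminacy I would compute, using adjunction to $\Omega S^2$, to be zero in the top cell, because $\Sigma\varphi$ is null on the relevant component. That detects $\kappa_2$. For $\kappa_1$, use the map $M\to P_2(\RP^2)$ together with the composite to $\RP^\infty$ and the $S^2$-fibration, or symmetrically the first-factor projection to $\RP^2$ lifted to $S^2$ on the double cover, and run the same argument. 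Verifying that the indeterminacy vanishes is the step I expect to require the real work.
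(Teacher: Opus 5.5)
Your part (a) is the paper's surjectivity argument, and detecting $\kappa^{(2)}$ downstairs through $\mathrm{pr}_2\circ k_\kappa=\mathrm{pr}_2+\kappa^{(2)}$ is also what the paper does. The gap is injectivity, which is the substance of the proposition. You leave the decisive step unverified, and the mechanism you sketch would not prove it.

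You take the indeterminacy for maps $M\to S^2$ agreeing with $\mathrm{pr}_2$ on $M^{(3)}$ to be the image of $(\Sigma\varphi)^*\colon[\Sigma M^{(3)},S^2]\to\pi_4(S^2)$. That is correct only when the map on $M^{(3)}$ is null. Here $\mathrm{pr}_2|_{M^{(3)}}$ is essential. The stabiliser of $\mathrm{pr}_2$ under the top-cell action of $\pi_4(S^2)$ is the image of the boundary map on $\pi_1(\mathrm{Map}_\bullet(M^{(3)},S^2),\mathrm{pr}_2|_{M^{(3)}})$, and that boundary map involves Whitehead products with $\mathrm{pr}_2$ in addition to the suspension terms (Barcus--Barratt). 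Incidentally, $\Sigma\varphi$ is not null: in the integral cellular chains $\partial e^4=2e^3$, so $\Sigma\varphi$ has degree $2$ onto the suspended $3$-cell.

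The conclusion you expect is true, but the Whitehead product terms are exactly what has to be controlled. The paper passes to the adjoint $\RP^2\to\mathrm{Map}_1(S^2,S^2)$ and the evaluation fibre sequence $\Omega S^2\xrightarrow{\partial}\Omega^2_1S^2\to\mathrm{Map}_1(S^2,S^2)\to S^2$, where $\partial(x)=\pm[x,\iota_2]$. Since $[\RP^2,\Omega S^2]\cong\Z/2$ is generated by $\mathrm{pinch}^*\tilde\eta$, one gets $\partial_*=0$ from $[\eta_2,\iota_2]=0$. This holds because $\eta_2$, the orbit map of the $SU(2)$-action on $S^2$, is a Gottlieb element.

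For $\kappa^{(1)}$ neither of your suggestions works as stated. Anything factoring through $P_2(\RP^2)$ sees nothing, since $\pi_4(P_2(\RP^2))=0$. The lift of $\mathrm{pr}_1\circ k_\kappa$ to a map $\wt M\to S^2$ is modified by $\wt\kappa^{(1)}$ on each of the two lifted discs. Forgetting equivariance, the total modification is $2\wt\kappa^{(1)}=0$; this is the cancellation you noticed yourself (compare \cref{rem:caveat}). Keeping track of equivariance just returns you to maps $M\to\RP^2$.

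The paper stays downstairs. It takes the adjoint $S^2\to F=\mathrm{Map}(\RP^2,\RP^2)_{\id}$ and proves that $\pi_4(\RP^2)=\pi_2(\Omega^2\RP^2)\to\pi_2(\mathrm{Map}_\bullet(\RP^2,\RP^2)_{\id})\to\pi_2(F)$ is injective. The first map is injective because the preceding boundary map, coming from restriction to $\RP^1$, is induced by the degree $2$ attaching map of the $2$-cell. It is therefore $x\mapsto 2x$, which is zero on $\pi_4(\RP^2)=\Z/2$. The second map is injective because the boundary $\pi_3(\RP^2)\to\pi_2(\mathrm{Map}_\bullet(\RP^2,\RP^2)_{\id})$ of the evaluation fibration vanishes: every element of $\pi_3(\RP^2)$ lifts to $\pi_3(F)$ through the orbit map of the isometric $SO(3)$-action. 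An independent proof that $k_{\kappa_1}\neq\id$ is the bordism computation $\alpha(k_{\kappa_1})=[K3]\neq 0$ of \cref{prop:detect}.
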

\begin{proof}
  \emph{Surjectivity onto $\ker\pi$.} Let $f$ be a based self-equivalence
  with $\pi(f)\simeq\id_P$. Compare $f|_{M^{(3)}}$ with the inclusion. The
  obstructions to a based homotopy lie in $H^i(M;\pi_i(M))$ for $i\le3$.
  The homotopy fibre of $M\to P$ is $2$-connected, and $\pi(f)\simeq\id_P$ gives a homotopy $cf\simeq c$. It lifts over $M^{(3)}\times I$, because the obstruction lies in
  $H^3(M;\pi_3(M))=H^3(M;\Z)^2=0$ by \cref{prop:homotopy}(iii). So $f\simeq\id$ on
  $M^{(3)}$, and $f$ differs from $\id$ by a modification on the top cell, that
  is by an element of $\pi_4(M)$, so $f\simeq k_\kappa$.

  \emph{Injectivity.} Write $\kappa=\kappa^{(1)}+\kappa^{(2)}$ with $\kappa^{(1)}\in\pi_4(\RP^2)$ and
  $\kappa^{(2)}\in\pi_4(S^2)$, both $\cong\Z/2$. A map into a product is determined up to homotopy by its two
  components, and
  \[
    \mathrm{pr}_1\circ k_\kappa=\mathrm{pr}_1+\kappa^{(1)},\qquad \mathrm{pr}_2\circ k_\kappa=\mathrm{pr}_2+\kappa^{(2)},
  \]
  where $+$ denotes the action of $\pi_4$ on maps out of $M$ by modification on the top cell. If
  $k_\kappa\simeq\id$ then both components are homotopic to $\mathrm{pr}_1,\mathrm{pr}_2$, even freely. So it is enough to show:
  (A) $\mathrm{pr}_1+\kappa^{(1)}\not\simeq\mathrm{pr}_1$ freely if $\kappa^{(1)}\neq0$, and
  (B) $\mathrm{pr}_2+\kappa^{(2)}\not\simeq\mathrm{pr}_2$ freely if $\kappa^{(2)}\ne0$.
  Free homotopy classes of maps $\RP^2\times S^2\to X$ correspond to free homotopy classes
  of maps $S^2\to\mathrm{Map}(\RP^2,X)$, and also to those of maps $\RP^2\to\mathrm{Map}(S^2,X)$.

  (A) Let $F=\mathrm{Map}(\RP^2,\RP^2)_{\id}$ and $G=\mathrm{Map}_*(\RP^2,\RP^2)_{\id}$, the components of the
  identity. The adjoint of $\mathrm{pr}_1$ is the constant map $S^2\to F$ at $\id$, and the adjoint of
  $\mathrm{pr}_1+\kappa^{(1)}$ is the class $P(\kappa^{(1)})\in\pi_2(F,\id)$, where
  $P\colon\pi_2(\Omega^2\RP^2)=\pi_4(\RP^2)\to\pi_2(G)\to\pi_2(F)$ is induced by the inclusion of the
  fibre of the restriction $G\to\Omega_c\RP^2$ to $\RP^1$ (maps of the $2$-cell relative to its boundary), followed by the inclusion of $G$ as the fibre
  of the evaluation $\mathrm{ev}\colon F\to\RP^2$. A free homotopy class of a map from a sphere is trivial if and only if the corresponding
  element of $\pi_2$ is trivial, so we must show $P$ is injective.
  First, $\pi_2(\Omega^2\RP^2)\to\pi_2(G)$ is injective, because the preceding boundary map
  $\pi_3(\Omega_c\RP^2)=\pi_4(\RP^2)\to\pi_2(\Omega^2\RP^2)=\pi_4(\RP^2)$ is $x\mapsto(1+c_*)x=2x=0$: the boundary is
  induced by the degree $2$ attaching map of the $2$-cell, which on $\Omega_c$ sends a loop $\gamma$ to $\gamma\cdot\gamma$, and $\pi_4(\RP^2)=\Z/2$.
  Second, $\pi_2(G)\to\pi_2(F)$ has kernel the image of $\partial\colon\pi_3(\RP^2)\to\pi_2(G)$, the boundary of the evaluation
  fibration. The group $\pi_3(\RP^2)\cong\pi_3(S^2)\cong\Z$ is generated by the image of $\pi_3(SO(3))$ under the orbit map
  $SO(3)\to\RP^2=SO(3)/O(2)$, which is an isomorphism on $\pi_3$ since $\pi_3(O(2))=\pi_2(O(2))=0$. Since $SO(3)$ acts on $\RP^2$ by isometries,
  the orbit map factors as $SO(3)\to F\xrightarrow{\ \mathrm{ev}\ }\RP^2$, so every element of $\pi_3(\RP^2)$ lifts to $\pi_3(F)$ and $\partial=0$. Hence $P$ is injective.

  (B) The adjoint of $\mathrm{pr}_2$ is the constant map $\RP^2\to\mathrm{Map}_1(S^2,S^2)$ at $\id$. The adjoint of
  $\mathrm{pr}_2+\kappa^{(2)}$ is $\mathrm{pinch}^*(\kappa^{(2)})$, where $\mathrm{pinch}\colon\RP^2\to S^2$ and $\kappa^{(2)}$ is viewed in
  $\pi_2(\Omega^2_1S^2)\to\pi_2(\mathrm{Map}_1(S^2,S^2))$. Consider the fibre sequence
  $\Omega S^2\xrightarrow{\ \partial\ }\Omega^2_1S^2\to\mathrm{Map}_1(S^2,S^2)\to S^2$. On homotopy groups $\partial(x)=\pm[x,\iota_2]$. Since
  $\RP^2=S^1\cup_2e^2$, the cofibre sequences give
  \[
\begin{gathered}
    [\RP^2,\Omega S^2]\cong[\Sigma\RP^2,S^3]\cong\Z/2,\\ [\RP^2,\Omega^2S^2]\cong[\Sigma^2\RP^2,S^2]\cong\pi_4(S^2)/2\pi_4(S^2)=\Z/2,
  \end{gathered}
\]
  where in the first group we used $\Omega S^2\simeq S^1\times\Omega S^3$ and $[\RP^2,S^1]=H^1(\RP^2;\Z)=0$. The generator of the first group is
  $\mathrm{pinch}^*\tilde\eta$ with $\tilde\eta\in\pi_2(\Omega S^2)=\pi_3(S^2)$ the Hopf class, so $\partial_*$ of it is
  $\mathrm{pinch}^*[\eta_2,\iota_2]=0$, since $\eta_2$ is the image of the generator of $\pi_3(SU(2))$ under the orbit map of the $SU(2)$-action on $S^2$, hence a Gottlieb
  element. Therefore $\partial_*=0$, and by exactness $[\RP^2,\Omega^2_1S^2]\to[\RP^2,\mathrm{Map}_1(S^2,S^2)]$ has trivial kernel. The class
  $\mathrm{pinch}^*\kappa^{(2)}$ is the generator of $[\RP^2,\Omega^2S^2]=\Z/2$, hence it is non-trivial in $[\RP^2,\mathrm{Map}_1(S^2,S^2)]$,
  where free and based classes agree for the trivial class.
\end{proof}

\begin{remark}\label{rem:caveat}
  The pairing of $\kappa^{(1)}$ with the $\RP^2$-factor is where non-orientability enters: the lift of $k_{\kappa^{(1)}}$ to the universal cover is
  $\id+2\wt\kappa^{(1)}\simeq\id$, and the normal invariant of $k_{\kappa^{(1)}}$ is expected to vanish (\cref{rem:k1}), so neither the cover nor, presumably,
  surgery theory sees it.
\end{remark}

\subsection{Proof of \texorpdfstring{\cref{thm:main}}{Theorem 1.1}}
By \cref{prop:kernel} there is an exact sequence
\[
  0\to\pi_4(M)\to\Autb(M)\xrightarrow{\ \pi\ }\Autb(P).
\]
By \cref{lem:gens}, $\pi$ is onto $\Autb(P)$, a group of order $8$ (\cref{lem:AutP}).
Hence $|\Autb(M)|=4\cdot8=32$. The elements $a,b,s_\psi$ generate a subgroup
$S\cong(\Z/2)^3$ mapping isomorphically onto $\Autb(P)$ (\cref{lem:gens}),
and the kernel $K=\pi_4(M)\cong(\Z/2)^2$ is central (\cref{lem:pinch}). Therefore
$\Autb(M)=S\times K\cong(\Z/2)^5$. The sign $s$ is $-1$ on $a$ and $b$ (their
based lifts are $\wt r\times\id$ and $\id\times\wt\rho$), and $+1$ on $s_\psi$
and $k_\kappa$. The lift of $s_\psi$ is $(x,y)\mapsto(x,\tilde\psi(x)y)$, which
has degree $1$. Hence $\ker s=\langle ab,s_\psi,k_1,k_2\rangle\cong(\Z/2)^4$. \qed

\subsection{Free homotopy classes}
\begin{proof}[Proof of \cref{cor:free}]
  Let $P_\ell$ be the homeomorphism obtained by pushing the basepoint once around
  the generator of $\pi_1(M)$ along an isotopy. It is based, freely isotopic to
  the identity, and its based lift is homotopic to $T$, so $s(P_\ell)=-1$. The kernel of the
  surjection $\Autb(M)\to\Autf(M)$ is the image of the connecting map $\pi_1(M,x_0)\to\pi_0\mathrm{Map}_*(M,M)$ of the evaluation fibration. It has order at most $2$ and is generated by $[P_\ell]$, which is non-trivial because $s(P_\ell)=-1$. Thus $\Autf(M)\cong\Autb(M)/\langle P_\ell\rangle$, and since
  $s(P_\ell)=-1$, the subgroup $\Autb^{+}(M)$ maps isomorphically onto it.
\end{proof}

\section{A non-orientable Hambleton--Kreck braid in dimension \texorpdfstring{$4$}{4}}
\label{sec:braid}

The braid of \cite{HK04} is stated there for closed oriented $4$-manifolds
(spin case Theorem~2.16, non-spin case Theorem~3.15; the description of
$\Autb(M)$ for odd order $\pi_1$ is Theorem~A with Corollary~4.9). In this section we construct a version
for closed smooth non-orientable $4$-manifolds with finite fundamental group and
spin universal cover. The proofs follow \cite[Section~2]{HK04}. The points that differ
in the non-orientable case are collected in \cref{sec:changes} and the proofs are given in \cref{sec:proofs}.

\subsection{Setup}
Let $M$ be a closed connected smooth $4$-manifold, $\pi=\pi_1(M)$ finite,
$w=w_1(M)\ne0$, and assume $w_2(\wt M)=0$ for the universal cover. Let
$c\colon M\to B\pi$ classify the universal cover. A \emph{normal $1$-type} in
the sense of Kreck is then
\[
  B=B\mathrm{Spin}\times B\pi\xrightarrow{\ \xi=\gamma\oplus E\ }BO,
\]
where $E\to B\pi$ is a stable bundle with $c^*w_1(E)=w$ and $c^*w_2(E)=w_2(\nu_M)$.
Such an $E$ must exist if $M$ has this normal $1$-type; it is a condition on $\pi$
and $\nu_M$, and for $\pi=\Z/2$ one takes $E=L$ or $E=3L$ according to whether
$w_2(\nu_M)=0$ or $w_2(\nu_M)=w^2$. For a space $X$ over $B\pi$ write
\[
  \Omega^{B}_n(X)=\Omega^{\mathrm{spin}}_n(X;E)=\pi_n\bigl(M\mathrm{Spin}\wedge X^{E-\dim E}\bigr)
\]
for the twisted spin bordism of $X$ (a $B$-structure on a manifold $N^n$ with
a map to $X$ is a spin structure on $\nu_N\ominus g^*E$). For $X=M$ and $X=P=P_2(M)$ the
bundle is pulled back along $c$.

Finiteness of $\pi$ is used only in a few places, which are listed in \cref{rem:finite}.

\begin{example}\label{ex:RP2S2B}
For $M=\RP^2\times S^2$, $w(\nu_M)=1+a$, so $E=L$ and
$\Omega^B_*(\mathrm{pt}\to B\Z/2)=\pi_*(M\mathrm{Spin}\wedge(B\Z/2)^{L-1})$ is the
normal $\mathrm{Pin}^{+}$ bordism of \cref{sec:preliminaries}:
$\Z/2,\Z/2,\Z/8,0,0,0$ in degrees $0,\dots,5$.
\end{example}

\subsection{What changes}\label{sec:changes}
Four things differ from the oriented braid.

\medskip\noindent\emph{(1) Degree.} Orientation classes live in
$H_4(M;\Zw)\cong\Z$. For a based self-equivalence $f$ the canonical isomorphism
$f^*\Zw\cong\Zw$ (it is the identity on the fibre at the basepoint) gives a
twisted degree $\deg_w(f)\in\{\pm1\}$.

\begin{lemma}\label{lem:degw}
$\deg_w(f)=s(f)$, the degree of the based lift $\wt f$ to the universal cover.
\end{lemma}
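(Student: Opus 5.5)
We identify $H_4(M;\Zw)$ with $H_4(\wt M;\Z)$, equivariantly, through the transfer, and then compare the twisted degree of $f$ with the ordinary degree of its based lift $\wt f$. Let $p\colon\wt M\to M$ be the universal cover. Since $\wt M$ is simply connected and $p^*w=0$, the pulled-back system $p^*\Zw$ is trivialised by its fibre at $\wt x_0$; this fibre equals the fibre of $\Zw$ at $x_0$. Concretely, we use the cellular chain complex $C_*(\wt M)$, a complex of free $\Z\pi$-modules, and write
\[
H_*(M;\Zw)=H_*\bigl(\Zw\otimes_{\Z\pi}C_*(\wt M)\bigr).
\]
The transfer $\operatorname{tr}\colon C_*(M;\Zw)\to C_*(\wt M)$ sends $1\otimes\sigma$ to $\sum_{g\in\pi}(-1)^{w(g)}g\sigma$. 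It is a chain map, and it lands in the $w$-twisted invariants. Finiteness of $\pi$ is used here.

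\textbf{Step 1: identifying the transfer on fundamental classes.} In degree $4$, the transfer carries the twisted fundamental class $[M]_w$ to $\pm[\wt M]$. To see this, compute locally on a top cell $e$ of $M$ with lifts $g\tilde e$. The twisted fundamental cycle is $1\otimes\tilde e$, where the orientation of $\tilde e$ is chosen so that the local orientations patch under the twist. Its transfer $\sum_g(-1)^{w(g)}g\tilde e$ is exactly the sum of the top cells, each oriented compatibly with a fixed orientation of $\wt M$. The reason is that $g$ preserves the orientation of $\wt M$ if and only if $w(g)=0$. We fix signs so that $\operatorname{tr}[M]_w=[\wt M]$, with $[\wt M]$ the orientation agreeing at $\wt x_0$ with the local orientation of $M$ at $x_0$ used to define $[M]_w$.

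\textbf{Step 2: naturality.} Let $f$ be a based self-equivalence and $\wt f$ its based lift. Then $\wt f$ is $\phi$-equivariant, where $\phi=f_*\colon\pi\to\pi$. Since $f^*w=w$ (as $f$ is a homotopy equivalence, $w_1$ is preserved), we have $w\circ\phi=w$. The canonical isomorphism $f^*\Zw\cong\Zw$, which is the identity at the basepoint, corresponds on chains to $1\otimes\sigma\mapsto 1\otimes\wt f_\#\sigma$. Using $w(\phi(g))=w(g)$, we then get
\[
\operatorname{tr}\circ f_*=\wt f_*\circ\operatorname{tr}
\]
on twisted chains. Applying both sides to $[M]_w$ gives
\[
\deg_w(f)\,[\wt M]=\operatorname{tr}(f_*[M]_w)=\wt f_*[\wt M]=\deg(\wt f)\,[\wt M].
\]
Since $[\wt M]$ generates the torsion-free group $H_4(\wt M)\cong\Z$, this yields $\deg_w(f)=s(f)$.

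\textbf{Main obstacle.} The delicate point is bookkeeping. We must check that the trivialisation of $f^*\Zw$ normalised at the basepoint really matches the based lift $\wt f$, rather than $T\wt f$; replacing $\wt f$ by $g\wt f$ would multiply the degree by $(-1)^{w(g)}$. This is exactly where basedness enters. I would check it directly: both the trivialisation and the lift are pinned down by the fibre at $\wt x_0$. Transport along a path from $x_0$ to any other point agrees with transport along the lifted path starting at $\wt x_0$. As a sanity check, the formula gives the expected answer for the basepoint-push $P_\ell$ of \cref{cor:free}, whose based lift is $T$ of degree $-1$.
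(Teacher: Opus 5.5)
Your proof is correct and follows the paper's argument: send $[M]_w$ through the transfer into the torsion-free group $H_4(\wt M)\cong\Z$, use the naturality $\mathrm{tr}\circ f_*=\wt f_*\circ\mathrm{tr}$ for the based lift, and cancel the non-zero class $\mathrm{tr}[M]_w$. Your normalisation $\mathrm{tr}[M]_w=\pm[\wt M]$ is the accurate one; the paper's value $|\pi|\,[\wt M]$ is what $\mathrm{tr}\circ p_*$ gives on $[\wt M]$, but only non-vanishing is used, so the discrepancy is harmless. Your condition $w\circ f_*=w$ with $f_*$ bijective also covers the maps with $f_*=\id$ on $\pi_1$, not yet known to be equivalences, to which \cref{lem:PDup} applies the lemma.
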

\begin{proof}
Let $p\colon\wt M\to M$ be the universal cover. The transfer
$\mathrm{tr}\colon H_4(M;\Zw)\to H_4(\wt M;\Z)$ sends $[M]_w$ to
$\sum_{g\in\pi}(-1)^{w(g)}g_*[\wt M]$. Since $g_*[\wt M]=(-1)^{w(g)}[\wt M]$, the signs cancel
and $\mathrm{tr}[M]_w=|\pi|\,[\wt M]$. If $f_*=\id$ on $\pi_1$ the based lift $\wt f$ is
$\pi$-equivariant, so the square formed by $\wt f$ and $f$ is a pullback of coverings and
$\mathrm{tr}\circ f_*=\wt f_*\circ\mathrm{tr}$. Hence $\deg_w(f)\,|\pi|=\deg\wt f\,|\pi|$.
\end{proof}

Put $\Aut^{+}_\bullet(M)=\ker s$. The group $\widehat\Omega^B_n$ below is the
kernel of the twisted degree $\Omega^B_n(M)\to H_n(M;\Zw)$ when $n=4$ and the
whole group otherwise.

\medskip\noindent\emph{(2) The Poincar\'e duality upgrade.} The oriented argument
uses that a degree one self-map inducing an isomorphism on $H_*$ is an
equivalence. For finite $\pi$ there is a cleaner route.

\begin{lemma}\label{lem:PDup}
Let $M$ be a closed connected $4$-manifold with finite $\pi_1$ and let
$f\colon M\to M$ be a based map with $f_*=\id$ on $\pi_1$ and $\deg_w(f)=1$. Then $f$ is a
homotopy equivalence.
\end{lemma}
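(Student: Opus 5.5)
Since $f_*=\id$ on $\pi_1$ and $\pi=\pi_1(M)$ is finite, I will pass to the universal cover and reduce to a statement about the simply connected closed $4$-manifold $\wt M$, where Whitehead's theorem applies. Let $\wt f\colon\wt M\to\wt M$ be the based lift fixing $\wt x_0$. Because $f_*$ is the identity on $\pi_1$, the lift $\wt f$ is $\pi$-equivariant. By \cref{lem:degw} we have $\deg\wt f=\deg_w(f)=1$. The space $\wt M$ is a closed, simply connected, orientable $4$-manifold, compact because $\pi$ is finite. It therefore suffices to show that $\wt f$ is a homotopy equivalence. Once that is known, $f$ induces an isomorphism on $\pi_1$ and on $\pi_n(M)=\pi_n(\wt M)$ for $n\ge2$, so Whitehead's theorem on the CW complex $M$ gives that $f$ is a homotopy equivalence.

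For $\wt f$ I will use the standard degree-one argument. A degree-one map $g\colon X\to Y$ between closed oriented $n$-manifolds has $g_*$ split surjective on integral homology, with splitting $g^{!}=\mathrm{PD}_X\circ g^*\circ\mathrm{PD}_Y^{-1}$, because $g_*(g^*y\cap[X])=y\cap g_*[X]=y\cap[Y]$. Here $X=Y=\wt M$, and its homology is finitely generated: $\Z,0,H_2,0,\Z$ with $H_2$ free. A surjective endomorphism of a finitely generated abelian group is an isomorphism, since such groups are Hopfian. Hence $\wt f_*$ is an isomorphism on $H_*(\wt M;\Z)$. Since $\wt M$ is simply connected, the homology Whitehead theorem then shows that $\wt f$ is a homotopy equivalence.

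I do not expect a serious obstacle. The one point needing care is the identification $\deg\wt f=1$, and this is exactly \cref{lem:degw}, which uses $f_*=\id$ on $\pi_1$ to make $\wt f$ equivariant and to commute with the transfer. Finiteness of $\pi$ enters twice: through \cref{lem:degw}, and in making $\wt M$ closed so that it has a fundamental class and finitely generated homology for the Hopfian argument. Without finiteness one would instead run the same argument with $\Z[\pi]$-coefficients and $w$-twisted Poincar\'e duality, but that is not needed here.
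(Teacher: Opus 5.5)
Your proposal is correct and follows the paper's proof essentially step for step. You lift to the universal cover, get $\deg\wt f=1$ from \cref{lem:degw}, use the cap-product splitting of a degree-one map together with the Hopfian property to get an isomorphism on $H_*(\wt M;\Z)$, and conclude with Whitehead's theorem on the simply connected $\wt M$ and then on $M$.
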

\begin{proof}
The based lift $\wt f\colon\wt M\to\wt M$ of $f$ to the universal cover has degree
$1$ by \cref{lem:degw}. A degree one self-map of a closed oriented manifold is
split surjective on $H_*(-;\Z)$ (cap product with the fundamental class), hence
bijective, since the groups are finitely generated and therefore Hopfian. As $\wt M$ is
simply connected, $\wt f$ is a homotopy equivalence by Whitehead's theorem. Then
$f$ is an isomorphism on $\pi_1$ and on all $\pi_i$, $i\ge2$, so it is a homotopy
equivalence.
\end{proof}

{\hbadness=2000
\medskip\noindent\emph{(3) Local coefficients.} The lifting obstructions in the definition of $\gamma$ lie in $H^{i+1}(V,A;\pi_i(X))$, where $X$ is the homotopy fibre of $c$ and $\pi_i(X)$ is a $\Z\pi$-module on which $\pi$ may act non-trivially. We use cohomology with local coefficients, so no assumption on this action is needed. Poincar\'e duality for the non-orientable manifold $V$ gives
\[H^{k}(V,A;G)\cong H_{d-k}(V,C;G\otimes\Zw).\]
The vanishing argument is unchanged: $X$ is $2$-connected, and after surgery below the middle
dimension the pair $(V,C)$ is $\ell_d$-connected, where $\ell_5=1$ and $\ell_6=2$, so the relevant groups are zero. The statements are \cref{lem:surgery,lem:lifting}.
}

\medskip\noindent\emph{(4) The spin structure torsor.} Lifts of $\nu_M$ to $B$ with prescribed
$B\pi$-component and fixed orientation of $\nu_M\ominus c^*E$ form a torsor over $H^1(M;\Z/2)$, which is non-zero because $w_1(M)\ne0$, so $(M,\bar\nu)$ and $(M,\bar\nu+x)$
are different $B$-manifolds, with possibly different bordism classes. For a fixed $\bar\nu$ this causes no
difficulty in the braid (\cref{sec:transport}); it matters when one asks which self-equivalences are
realized by diffeomorphisms (\cref{rem:sminus}). The hypothesis that $\pi$ has odd order in \cite{HK04} is used there in
the computations, not for the structure of the braid.

\subsection{The braid}\label{sec:braidsub}
Fix a normal $1$-smoothing $\bar\nu=(M,\sigma)$, that is a $B$-structure on $\nu_M$, which will be
kept fixed throughout. Write $P=P_2(M)$, $c\colon M\to P$ for the $2$-type and its $3$-connected
map, and $c_X\colon X\to B\pi$ for the classifying map of the universal cover of $X=M,P$. Let
$\Aut_\bullet(X)$ denote the based homotopy classes of self-equivalences inducing the identity on
$\pi$ (for $\pi=\Z/2$ this is no restriction), and
$\Aut^{+}_\bullet(M)=\ker(s)$. Put $\widehat\Omega^B_4(M)=\ker\bigl(\Omega^B_4(M)\to H_4(M;\Zw)\bigr)$
and let $\widehat\Omega^B_5(P,M)$ be the group of bordism classes of $B$-manifolds
$(W^5,\partial W)\to(P,M)$ whose boundary map to $M$ has twisted degree $0$.

\subsubsection{Step (c): transport of structures}\label{sec:transport}
In the oriented case an element of $\Omega^{\mathrm{spin}}_n(X)$ is a manifold with a map to $X$ and
nothing else is needed to push it forward along a self-map of $X$. Here the structure is a spin
structure on $\nu_N\ominus g^*c_X^*E$, so pushing forward along $u\colon X\to Y$ requires an identification of
$c_X^*E$ with $u^*c_Y^*E$. This identification exists and is canonical.

Call a based map $u\colon X\to Y$ of connected based CW complexes with maps $c_X,c_Y$ to $B\pi$
\emph{over $B\pi$} if $c_Yu\simeq c_X$ by a based homotopy.

\begin{lemma}\label{lem:canon}
For a connected based CW complex $X$ the space $\mathrm{Map}_\bullet(X,B\pi)$ is homotopy discrete.
Consequently, for based maps $\varphi,\psi\colon X\to B\pi$ the based homotopies from $\varphi$ to $\psi$
form either the empty set or a contractible space.
\end{lemma}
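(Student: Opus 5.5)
The plan is to compute the homotopy groups of $\mathrm{Map}_\bullet(X,B\pi)$ directly. Since $B\pi=K(\pi,1)$, the based mapping space splits into path components indexed by $[X,B\pi]_\bullet=\mathrm{Hom}(\pi_1(X),\pi)$. So it suffices to show that each component has trivial higher homotopy groups. For $k\ge1$ and any based map $\varphi\colon X\to B\pi$, adjunction gives
\[
\pi_k\bigl(\mathrm{Map}_\bullet(X,B\pi),\varphi\bigr)\cong[\,S^k\wedge X\to B\pi\,]
\]
relative to $\varphi$. More precisely, it is the set of homotopy classes of maps $(S^k\times X)\cup\ldots$ that restrict to $\varphi$ on $X$ and are constant on $S^k\vee\ast$. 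I will frame this by obstruction theory on the relative CW complex $(S^k\times X,\ S^k\vee X)$, whose relative cells have dimension at least $k+1\ge2$, in the form $e^k\times e^j$ with $j\ge1$.

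The obstructions to existence and uniqueness of extensions and homotopies rel $S^k\vee X$ lie in relative cohomology with local coefficients in $\pi_i(B\pi)$. These groups vanish for all $i\ge2$. The only possible nonzero coefficient group is $\pi_1(B\pi)=\pi$. It would enter in relative dimension $1$ for existence, or in relative dimension $0$ for the difference of homotopies, and neither occurs because all relative cells have dimension at least $2$.

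Hence every element of $\pi_k$ is trivial for $k\ge1$. The only subtlety is that $\pi_1$ of the mapping space could be nonabelian. I would handle this cleanly with the standard identification: the component of $\varphi$ in $\mathrm{Map}_\bullet(X,K(\pi,1))$ is equivalent to the space of based sections of a bundle with fibre $K(\pi,1)$. Its homotopy groups are computed by the fibration tower over the skeleta of $X$, and each stage has fibre a product of copies of $\Omega^{n}K(\pi,1)$ with $n\ge1$ over the based cells. These are discrete for $n=1$ and contractible for $n\ge2$.

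An equivalent route, and the one I would actually write down, is induction over the skeleta of $X$. The restriction $\mathrm{Map}_\bullet(X^{(n)},B\pi)\to\mathrm{Map}_\bullet(X^{(n-1)},B\pi)$ is a fibration whose fibre is a product of copies of $\Omega^nB\pi$, one for each $n$-cell. For $n=1$ the fibres are discrete. For $n\ge2$ they are contractible, so the restriction is a homotopy equivalence onto the union of the components it hits. At $n=1$ the space is $\prod\Omega B\pi\simeq\prod\pi$, which is discrete. At $n=2$ we get a fibration over a discrete space with fibres $\prod\Omega^2B\pi\simeq\ast$, over the subset of components where the attaching obstruction in $\pi$ vanishes; this is again homotopy discrete. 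All higher stages are equivalences onto unions of components, and passing to the limit over skeleta is harmless because the tower is eventually a sequence of equivalences on each component (a $\lim^1$ term appears only for $\pi_0$, and it concerns surjectivity, not discreteness). This gives homotopy discreteness.

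The consequence then follows formally. The space of based homotopies from $\varphi$ to $\psi$ is the space of paths from $\varphi$ to $\psi$ in $\mathrm{Map}_\bullet(X,B\pi)$. In a homotopy discrete space, this space is empty if $\varphi$ and $\psi$ lie in different components. Otherwise it is equivalent to $\Omega$ of a contractible component, hence contractible.

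The main obstacle I expect is the bookkeeping at the $1$- and $2$-skeleton stage, namely showing that components glue without creating $\pi_1$. Everything else is formal once the skeletal fibration is set up.
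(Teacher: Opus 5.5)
\textbf{Verdict.} Your argument is correct in substance but takes a different route from the paper. One dimension count needs a small, standard repair, described in the second paragraph.

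\textbf{Comparison with the paper.} The paper works through the \emph{free} mapping space. It uses the evaluation fibration $\mathrm{Map}_\bullet(X,B\pi)\to\mathrm{Map}(X,B\pi)\to B\pi$ and the standard description, due to Gottlieb, of the component $\mathrm{Map}(X,B\pi)_\varphi$ as a $K(Z,1)$. Here $Z$ is the centralizer of $\varphi_*\pi_1(X)$, and evaluation induces the inclusion $Z\hookrightarrow\pi$. The long exact sequence then kills $\pi_k$ of the based space for $k\ge1$. You stay in the based mapping space and use only that every map $S^m\to B\pi$ with $m\ge2$ is null-homotopic. A null-homotopy rel $S^k\vee X$ of a representative $F\colon S^k\times X\to B\pi$ is built cell by cell over the relative cells of $(S^k\times X)\times I$. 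The paper's proof is two lines but imports a nontrivial theorem about the free mapping space. Yours is self-contained and elementary, and proves exactly what the lemma needs. Your first argument already shows that every element of $\pi_k$, $k\ge1$, is trivial as a set. So the concern about a nonabelian $\pi_1$ and the skeletal tower are not needed. In the Milnor sequence the relevant term is $\lim^1\pi_{k+1}$, which is zero here, not something "for $\pi_0$".

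\textbf{The step that needs repair.} The relative cells of $(S^k\times X,\,S^k\vee X)$ are $e^k\times e^j$ for \emph{all} cells $e^j\neq x_0$ of $X$. Your claim that $j\ge1$ therefore holds only if $x_0$ is the only $0$-cell.
\begin{itemize}
\item Otherwise there are relative $k$-cells $e^k\times e^0$. For $k=1$ the homotopy problem then has relative $2$-cells $e^1\times e^0\times I$.
\item Extending over these requires the loops $F|_{S^1\times\{x\}}$ to be trivial in $\pi_1(B\pi,\varphi(x))$. This is true, but it does not follow from dimension counting. Already for $X=[0,1]$ you must use the $2$-cell $e^1\times e^1$ to see it.
\item The same assumption is hidden in the tower, where you say that at $n=1$ the space is $\prod\Omega B\pi$. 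If $X^{(0)}$ has further points, then $\mathrm{Map}_\bullet(X^{(0)},B\pi)$ is a product of copies of $B\pi$, which is not discrete.
\end{itemize}
The fix is to collapse a maximal tree $T\ni x_0$. The quotient $X\to X/T$ is a based homotopy equivalence of well-pointed spaces, so it induces a homotopy equivalence of based mapping spaces, and $X/T$ has a single $0$-cell. With this reduction both of your arguments go through.
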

\begin{proof}
Evaluation at the base point gives a fibration $\mathrm{Map}_\bullet(X,B\pi)\to\mathrm{Map}(X,B\pi)\to B\pi$.
Since $B\pi$ is aspherical, the component $\mathrm{Map}(X,B\pi)_\varphi$ is a $K(Z,1)$ where $Z$ is the
centralizer in $\pi$ of $\varphi_*\pi_1(X)$, and evaluation induces the inclusion $Z\to\pi$ on $\pi_1$.
This is injective and $\pi_k(B\pi)=0$ for $k\ge2$, so the long exact sequence gives
$\pi_k(\mathrm{Map}_\bullet(X,B\pi),\varphi)=0$ for all $k\ge1$.
\end{proof}

Let $u$ be over $B\pi$ and $H\colon c_Yu\simeq c_X$ a based homotopy. Since pull-backs of a bundle along
homotopic maps are canonically isomorphic up to homotopy, $H$ gives an isomorphism
$\Phi_H\colon c_X^*E\cong u^*c_Y^*E$ of bundles over $X$. For a $B$-manifold $(N,g,\sigma)$ over $X$,
where $\sigma$ is a spin structure on $\nu_N\ominus g^*c_X^*E$, define
\[
 u_*[N,g,\sigma]=[N,\,u\circ g,\,(g^*\Phi_H)_\#\sigma],
\]
where $(g^*\Phi_H)_\#$ transports spin structures along the induced isomorphism of stable bundles
$\nu_N\ominus g^*c_X^*E\cong\nu_N\ominus(ug)^*c_Y^*E$.

\begin{lemma}\label{lem:transport}
The homomorphism $u_*\colon\Omega^B_n(X)\to\Omega^B_n(Y)$ does not depend on $H$, only on the based
homotopy class of $u$, and $(vu)_*=v_*u_*$, $(\id)_*=\id$.
\end{lemma}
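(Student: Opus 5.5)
The plan is to reduce everything to \cref{lem:canon}, which makes the choice of $H$ contractible, and then to check well-definedness and functoriality directly on representatives.

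\emph{Independence of $H$.} By \cref{lem:canon}, any two based homotopies $H,H'\colon c_Yu\simeq c_X$ are joined by a based homotopy of homotopies $K\colon X\times I\times I\to B\pi$, which is unique up to higher homotopy. Pulling $E$ back along $K$ gives a bundle over $X\times I\times I$. Restricting to the edges shows that $\Phi_H$ and $\Phi_{H'}$ are homotopic through bundle isomorphisms $c_X^*E\cong u^*c_Y^*E$. Isomorphism classes of spin structures are discrete, and they are transported identically along homotopic isomorphisms. Hence $(g^*\Phi_H)_\#\sigma=(g^*\Phi_{H'})_\#\sigma$ as spin structures up to isomorphism.

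\emph{Independence of $u$ within its based homotopy class.} Let $u_t$ be a based homotopy from $u$ to $u'$. Concatenating $c_Y u_t$ with $H'$ gives a based homotopy $c_Yu\simeq c_X$, and by the first step we may use it in place of $H$. The family $u_t\circ g$, together with the transported structures on $N\times I$, is then a $B$-bordism from $u_*[N,g,\sigma]$ to $u'_*[N,g,\sigma]$. The structure is a spin structure on $\nu_{N\times I}\ominus(u_tg)^*c_Y^*E$, obtained by pulling back along the family of isomorphisms defined by the concatenated homotopy.

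\emph{Well-defined homomorphism.} A $B$-bordism $(W,G,\Sigma)$ over $X$ is pushed forward by the same recipe, with $G^*\Phi_H$ in place of $g^*\Phi_H$. This gives a bordism between the pushforwards, since restriction to the boundary commutes with the construction. Additivity holds because the construction is local on disjoint unions.

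\emph{Functoriality.} $(\id)_*=\id$ follows by choosing the constant homotopy, for which $\Phi$ is the identity. For $(vu)_*=v_*u_*$, let $H_u\colon c_Yu\simeq c_X$ and $H_v\colon c_Zv\simeq c_Y$. The concatenation $(H_v\circ u)\ast H_u$ is a valid based homotopy $c_Zvu\simeq c_X$. Its associated isomorphism is the composite $(u^*\Phi_{H_v})\circ\Phi_{H_u}$ up to homotopy, and transport of spin structures is functorial in compositions of isomorphisms. By independence of the choice of homotopy, this is $(vu)_*$.

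The main obstacle is the first step: making precise that homotopic bundle isomorphisms transport spin structures identically, and that the pull-back isomorphism $\Phi_H$ depends on $H$ only up to homotopy. I would handle it by working with the universal bundle, viewing $\Phi_H$ as parallel transport along $H$ in $E$ over $B\pi$. Contractibility of the space of based homotopies from \cref{lem:canon} then gives a contractible space of isomorphisms. Since the set of spin structures is discrete, the transport map from this contractible space to that set is constant.
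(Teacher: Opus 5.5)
Your proposal is correct and follows essentially the same route as the paper. Independence of $H$ comes from the contractibility in \cref{lem:canon}, and independence of $u$ in its based homotopy class comes from a cylinder $N\times I$ over $u\simeq u'$, with the transporting homotopy adjusted by concatenation (the paper's $H\cdot(c_YG)^{-1}$). Functoriality comes from the concatenated homotopy $c_Zvu\simeq c_Yu\simeq c_X$ being an admissible, hence canonical, choice. Your check of compatibility with bordism, by pushing forward a bordism $W$ via $G^*\Phi_H$, is slightly more explicit than the paper's, as is your explicit check of additivity.
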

\begin{proof}
By \cref{lem:canon} any two choices of $H$ are homotopic through based homotopies, hence $\Phi_H$ is
unique up to homotopy and the resulting structures on $N$ are isomorphic; applying this to
$N\times I$ shows that $u_*$ is compatible with bordism. If $G$ is a based homotopy
$u\simeq u'$ and $H$ is as above, then $H'=H\cdot(c_Y G)^{-1}$ is a based homotopy
$c_Yu'\simeq c_X$, and the map $N\times I\to Y$ given by $G\circ(g\times\id)$ with the structure
transported along $H$ is a bordism from $u_*[N,g,\sigma]$ to $u'_*[N,g,\sigma]$. Finally, for $v\colon Y\to Z$ over $B\pi$
with homotopy $H_v$, the concatenation $c_Zvu\simeq c_Yu\simeq c_X$ is a based homotopy
and therefore \emph{is} the canonical one by \cref{lem:canon}. Thus the transport along $vu$ equals the
composite of the transports along $u$ and $v$.
\end{proof}

When $B\pi$ is a point (the oriented simply connected case) all of this is vacuous, which is why it does not
occur in \cite{HK04}.
Every $f\in\Aut_\bullet(M)$ and every $\varphi\in\Aut_\bullet(P)$ is over $B\pi$ for the classifying maps, so
\cref{lem:transport} gives actions
$f_*$ on $\Omega^B_n(M)$ and $\varphi_*$ on $\Omega^B_n(P)$, and $c_*$ intertwines $f$ with the
map $\Aut_\bullet(M)\to\Aut_\bullet(P)$, $f\mapsto\varphi_f$, where $\varphi_fc\simeq cf$. For
$f\in\Aut^+_\bullet(M)$ the induced map on $H_4(M;\Zw)$ is multiplication by $\deg_w(f)=1$
(\cref{lem:degw}), so $f_*$ preserves $\widehat\Omega^B_4(M)$.

\begin{definition}\label{def:alpha}
For $f\in\Aut^+_\bullet(M)$ put $\alpha(f)=f_*[M,\mathrm{id},\sigma]-[M,\mathrm{id},\sigma]\in\widehat\Omega^B_4(M)$.
The structure $\sigma$ lives on the source and does not depend on $f$.
\end{definition}

\begin{proposition}\label{prop:crossed}
For $f,g\in\Aut^+_\bullet(M)$ one has $\alpha(fg)=\alpha(f)+f_*\alpha(g)$. Consequently
$\ker\alpha$ is a subgroup, and $\alpha(f)=\alpha(g)$ if and only if $g^{-1}f\in\ker\alpha$.
\end{proposition}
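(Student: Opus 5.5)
The plan is to expand the definition of $\alpha$ and use functoriality of transport from \cref{lem:transport}. By that lemma, $(fg)_*=f_*g_*$ as homomorphisms on $\Omega^B_4(M)$. Each $f_*$ is additive, because it is defined on representatives and commutes with disjoint union. Then
\[
\alpha(fg)=f_*g_*[M,\id,\sigma]-[M,\id,\sigma].
\]
We add and subtract $f_*[M,\id,\sigma]$ and write $g_*[M,\id,\sigma]=\alpha(g)+[M,\id,\sigma]$. Linearity of $f_*$ gives
\[
\alpha(fg)=f_*\alpha(g)+f_*[M,\id,\sigma]-[M,\id,\sigma]=\alpha(f)+f_*\alpha(g).
\]
Both sides lie in $\widehat\Omega^B_4(M)$, because $f_*$ preserves that subgroup for $f\in\Aut^+_\bullet(M)$, as noted before \cref{def:alpha}. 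The $f_*$ give a left action of $\Aut^+_\bullet(M)$ on $\widehat\Omega^B_4(M)$, again by \cref{lem:transport}, so $\alpha$ is a crossed homomorphism for this action.

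The consequences are the standard facts about crossed homomorphisms. First, $\alpha(\id)=0$, since $(\id)_*=\id$. Taking $g=f^{-1}$ then gives $0=\alpha(f)+f_*\alpha(f^{-1})$, so $\alpha(f^{-1})=-f^{-1}_*\alpha(f)$.

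For $\ker\alpha$: if $\alpha(f)=\alpha(g)=0$ then $\alpha(fg)=0+f_*0=0$, and $\alpha(f^{-1})=-f^{-1}_*0=0$, so $\ker\alpha$ is a subgroup. For the last statement,
\[
\alpha(g^{-1}f)=\alpha(g^{-1})+g^{-1}_*\alpha(f)=g^{-1}_*\bigl(\alpha(f)-\alpha(g)\bigr).
\]
Since $g^{-1}_*$ is an isomorphism, this vanishes if and only if $\alpha(f)=\alpha(g)$.

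The only point that needs care is that $\alpha$ really depends only on the based homotopy class of $f$, and that the transport composes strictly. Both hold because the based homotopy $c\,fg\simeq c$ is the canonical one (\cref{lem:canon}), which \cref{lem:transport} already establishes. Beyond that, the whole argument is formal.
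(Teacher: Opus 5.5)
Your proposal is correct and follows essentially the same route as the paper: expand $\alpha(fg)$ using $(fg)_*=f_*g_*$ from \cref{lem:transport} and additivity of $f_*$. Then deduce $\alpha(\id)=0$, $\alpha(g^{-1})=-g^{-1}_*\alpha(g)$ and $\alpha(g^{-1}f)=g^{-1}_*\bigl(\alpha(f)-\alpha(g)\bigr)$, which gives both the subgroup property and the last assertion.
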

\begin{proof}
By \cref{lem:transport}, $(fg)_*[M,\id,\sigma]=f_*g_*[M,\id,\sigma]$. Hence
\[
 \alpha(fg)=f_*\bigl(g_*[M,\id,\sigma]-[M,\id,\sigma]\bigr)+f_*[M,\id,\sigma]-[M,\id,\sigma]=f_*\alpha(g)+\alpha(f).
\]
In particular $\alpha(\id)=0$ and $\alpha(g^{-1})=-g^{-1}_*\alpha(g)$, so $\alpha(g^{-1}f)=g^{-1}_*(\alpha(f)-\alpha(g))$.
Since $g^{-1}_*$ is an automorphism, the last assertion follows, and $\ker\alpha$ is closed under products and inverses.
\end{proof}

The proof is formal once \cref{lem:transport} is available. No gauge data are needed: for a fixed $\bar\nu$ the structure is carried by the
source, and the identification $c^*E\cong f^*c^*E$ is forced by \cref{lem:canon}. The braid concerns $\Aut^+_\bullet(M)$ and not $\Aut_\bullet(M)$.

\subsubsection{The groups \texorpdfstring{$\widetilde{\mathcal H}(M)$ and $\widetilde{\mathcal H}(P)$}{H(M) and H(P)}}
\begin{definition}\label{def:H}
An element of $\widetilde{\mathcal H}(M)$ is a bordism class of pairs $(W,F)$, where $W^5$ is a compact manifold with a
$B$-structure and $\partial W=-M\sqcup M$, the $B$-structures on the two ends being $\sigma$
(under $\id_M$), and $F\colon W\to M$ is a map over $B\pi$ with $F|_{\partial_1W}=\id_M$ and
$F|_{\partial_2W}=f$ for a based self-map $f$. Two pairs $(W,F)$, $(W',F')$ with end maps $f,f'$ are
identified if there is a based homotopy $f\simeq f'$ such that the closed manifold
$-W'\cup W\cup M\times I$ with the induced map to $M$ is zero in $\Omega^B_5(M)$.
\end{definition}

The end map $f$ is a self-map over $B\pi$ with $F_*[W,\partial]=0$ in homology, so $\deg_w(f)=1$, and $f$ is a
homotopy equivalence by \cref{lem:PDup}. Hence $f\in\Aut^+_\bullet(M)$. Define the product by
\[
 (W,F)\bullet(W',F')=\bigl(W\cup_{\partial_2W=\partial_1W'}W',\ F\cup f\circ F'\bigr).
\]
Here $W'$ is regarded as a $B$-manifold over $M$ through $f\circ F'$ with the structure transported by $f_*$
(\cref{lem:transport}). The glueing is legitimate: the incoming end of $W'$ becomes $(M,f,\sigma)$,
which is the outgoing end of $W$.

\begin{proposition}\label{prop:Hgroup}
$\bullet$ is well defined on equivalence classes, associative, has unit $(M\times I,\mathrm{pr}_1)$ and inverses.
The map $\widetilde{\mathcal H}(M)\to\Aut^+_\bullet(M)$, $(W,F)\mapsto f$, is a homomorphism with image $\ker\alpha$, and the
map $\Omega^B_5(M)\to\widetilde{\mathcal H}(M)$, $[N,g]\mapsto(M\times I\sqcup N,\mathrm{pr}_1\sqcup g)$,
is a homomorphism whose image is the kernel of this map.
\end{proposition}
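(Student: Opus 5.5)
The plan follows \cite[Section~2]{HK04}, with \cref{lem:transport} and \cref{lem:PDup} used where the oriented argument relies on orientations.

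\emph{Well-definedness and group axioms.} If $(W,F)$ is replaced by an equivalent pair $(W_1,F_1)$ via a homotopy $f\simeq f_1$, the product changes only by glueing in a cylinder $M\times I$ carrying that homotopy, together with a closed manifold that is null in $\Omega^B_5(M)$. By \cref{lem:transport} the transported structure on $W'$ depends only on the based homotopy class of $f$, so the two products are equivalent. Replacing $(W',F')$ by an equivalent pair changes the glued manifold by the pushforward $f_*$ of a null-bordant closed manifold, which is still null. Associativity holds because glueing is associative and $(fg)_*=f_*g_*$ (\cref{lem:transport}). The unit axiom is clear, since gluing on $M\times I$ with $\mathrm{pr}_1$ changes nothing up to a collar.

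\emph{Inverses.} Given $(W,F)$ with end map $f$, take $\bar W=-W$ with the ends interchanged, regarded over $M$ via $f^{-1}\circ F$. Its incoming end maps by $f^{-1}f\simeq\id$ and its outgoing end by $f^{-1}$. The structure is transported by $(f^{-1})_*$, and the incoming end is matched to $\sigma$ using a homotopy $f^{-1}f\simeq\id$. The product $W\cup_f\bar W$ is then the double of $W$ over $M$, which is $B$-bordant rel boundary to $M\times I$ (bound it by $W\times I$ with corners rounded). Hence the product is equivalent to the unit.

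\emph{The homomorphism and its image.} The end map of a product is $f\circ f'$, so $(W,F)\mapsto f$ is a homomorphism. Its values lie in $\Aut^+_\bullet(M)$ by the discussion after \cref{def:H}, which uses \cref{lem:PDup}. The pair $W$ is a $B$-bordism over $M$ from $[M,\id,\sigma]$ to $[M,f,\sigma]=f_*[M,\id,\sigma]$, so $\alpha(f)=0$. Conversely, if $\alpha(f)=0$, a null-bordism of $f_*[M,\id,\sigma]-[M,\id,\sigma]$ is a $B$-manifold $W$ with exactly these ends and a map to $M$ restricting to $\id$ and $f$. The map is over $B\pi$ automatically, since $B$-structures include the $B\pi$-component. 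So $f$ is in the image, and the image is exactly $\ker\alpha$ (a subgroup by \cref{prop:crossed}).

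\emph{The kernel.} The map $\Omega^B_5(M)\to\widetilde{\mathcal H}(M)$ is a homomorphism because the disjoint union $M\times I\sqcup(N\sqcup N')$ is equivalent to the product of $M\times I\sqcup N$ and $M\times I\sqcup N'$; pushforward along $\id$ does nothing. Its image has end map $\id$. Conversely, if $(W,F)$ has end map $f\simeq\id$, take the based homotopy $f\simeq\id$ in \cref{def:H}. The closed manifold $-(M\times I)\cup W\cup M\times I$ defines a class $[N]\in\Omega^B_5(M)$. By construction $(W,F)$ is equivalent to $M\times I\sqcup N$, since the comparison manifold is $N\sqcup(\text{a double})$ minus $N$, which is null.

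\emph{Main obstacle.} The delicate step is the inverse construction. One has to check that the structure on $-W$ agrees with $\sigma$ on both ends after transport along $f^{-1}$, and that the double is null rel ends. The torsor issue (4) could intervene here, so I would first verify it using the uniqueness in \cref{lem:canon} and the fact that the structure lives on the source.
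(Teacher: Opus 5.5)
Your proposal is correct and follows essentially the same route as the paper. It proves well-definedness in each variable via the $W'\times I$ bordism behind \cref{lem:transport} and the pushforward of a null-bordant closed manifold, associativity from $(fg)_*=f_*g_*$, and inverses via $(-W,f^{-1}\circ F)$, whose product with $(W,F)$ bounds $W\times I$. It identifies the image as $\ker\alpha$ by reading $W$ as a bordism from $[M,\id,\sigma]$ to $f_*[M,\id,\sigma]$, and the kernel by closing $W$ up with the cylinder that carries $f\simeq\id$.

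The torsor issue you flag is handled in the paper exactly as you propose: all identifications $c^*E\cong f^*c^*E$ are forced by \cref{lem:canon}. One small correction: in the kernel step the comparison manifold is simply $N\sqcup(-N)$, so no double is needed there.
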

\begin{proof}
\emph{Well defined.} For $(W_1,F_1)$, $(V_1,G_1)$ with end maps $f,f'$, a homotopy $f\simeq f'$ and
closed manifold $Z_1\in\Omega^B_5(M)$, and a second element $(W_2,F_2)$, the closed manifold
associated with the products $(W_1,F_1)\bullet(W_2,F_2)$ and $(V_1,G_1)\bullet(W_2,F_2)$ is $Z_1$
together with two copies of $W_2$, mapped by $f\circ F_2$ and $f'\circ F_2$ and joined by the homotopy composed with $F_2$.
The two copies of $W_2$ bound $W_2\times I$, so the class is $Z_1$. In the other variable, with $Z_2$ the closed
manifold for $(W_2,F_2)\sim(V_2,G_2)$, the closed manifold for the products is $f_*Z_2$ by the definition of the product and
\cref{lem:transport}, and it is zero in $\Omega^B_5(M)$ because $Z_2$ is. In both variables the glueing maps
and structures agree by \cref{lem:canon}, since all identifications of $c^*E$ with $f^*c^*E$ are forced.

\emph{Associativity and unit.} Associativity of glueing, together with $(ff')_*=f_*f'_*$ from \cref{lem:transport}.

\emph{Inverses.} Given $(W,F)$ with end map $f$, let $\overline W=-W$ be $W$ with the $B$-structure reversed, read as a cobordism from $\partial_2W$ to $\partial_1W$
and give it the map $f^{-1}\circ F$, where $f^{-1}\in\Aut^+_\bullet(M)$ is a homotopy inverse. Its end maps are
$f^{-1}f\simeq\id$ and $f^{-1}$, and the product $(W,F)\bullet(\overline W,f^{-1}F)$ is $W\cup\overline W$ with the
map $F\cup F$ (using $ff^{-1}\simeq\id$), which is the boundary of $W\times I$ glued to $M\times I$. Hence it represents the unit.

\emph{Homomorphisms and image.} The end map is multiplicative by definition of $\bullet$. A class $f\in\Aut^+_\bullet(M)$
is in the image iff $(M,f,\sigma)$ and $(M,\id,\sigma)$ are bordant over $M$ as $B$-manifolds, that is iff $f_*[M,\id,\sigma]=[M,\id,\sigma]$,
that is iff $\alpha(f)=0$. The map from $\Omega^B_5(M)$ is a homomorphism because $\bullet$ with a class supported away from the cylinder
is disjoint union, and an element $(W,F)$ with $f\simeq\id$ gives, by glueing $M\times I$ along the homotopy, a closed manifold whose class in
$\Omega^B_5(M)$ maps to $(W,F)$.
\end{proof}

The group $\widetilde{\mathcal H}(P)$ is defined in the same way with $P$ in place of $M$, the end map
$f\colon M\to P$ being a based $3$-connected map over $B\pi$ and $F|_{\partial_1W}=c$; the product uses the
unique $\varphi_f\in\Aut_\bullet(P)$ with $\varphi_fc\simeq f$, and $(\varphi_f)_*$ from \cref{lem:transport}.

\subsubsection{The theorem}
The maps $\pi$, $\beta$, $\partial$ and $\gamma$ in the theorem are defined in \cref{sec:proofs}.
\begin{theorem}\label{thm:braidNO}
Let $M$ be a closed smooth non-orientable $4$-manifold with finite $\pi_1$, spin universal cover
and normal $1$-smoothing $\bar\nu$. There are exact sequences
\begin{align*}
&\Omega^B_5(M)\to\widetilde{\mathcal H}(M)\to\Aut^+_\bullet(M)\xrightarrow{\ \alpha\ }\widehat\Omega^B_4(M),\\
&\pi_1(\mathcal E_\bullet(P))\to\Omega^B_5(P)\to\widetilde{\mathcal H}(M)\to\Aut_\bullet(P)\xrightarrow{\ \beta\ }\Omega^B_4(P),\\
&\Omega^B_5(M)\to\Omega^B_5(P)\to\widehat\Omega^B_5(P,M)\xrightarrow{\ \partial\ }\widehat\Omega^B_4(M)\to\Omega^B_4(P),\\
&\pi_1(\mathcal E_\bullet(P))\to\widehat\Omega^B_5(P,M)\xrightarrow{\ \gamma\ }\Aut^+_\bullet(M)\xrightarrow{\ \pi\ }\Aut_\bullet(P),
\end{align*}
which interlock as a braid: $\alpha\circ\gamma=\partial$, and the composite
$\Omega^B_5(P)\to\widetilde{\mathcal H}(M)\to\Aut^+_\bullet(M)$ equals
$\gamma$ on the image of $\Omega^B_5(P)$ up to inversion. Here $\alpha$ is a crossed homomorphism
(\cref{prop:crossed}) and exactness at $\Aut^+_\bullet(M)$ is exactness of pointed sets in the sense that
$\alpha(f)=\alpha(g)$ iff $g^{-1}f\in\ker\alpha=\im(\widetilde{\mathcal H}(M))$.
\end{theorem}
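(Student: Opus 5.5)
The plan is to follow \cite[Section~2]{HK04} sequence by sequence, first defining the maps $\beta$, $\partial$, $\gamma$ and then checking exactness one place at a time. The non-orientable inputs are exactly those isolated in \cref{sec:changes}: twisted degree (\cref{lem:degw}), the upgrade to homotopy equivalences (\cref{lem:PDup}), local-coefficient obstruction theory, and canonical transport of $B$-structures (\cref{lem:canon,lem:transport}).

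The first sequence is already essentially \cref{prop:Hgroup}. Exactness at $\widetilde{\mathcal H}(M)$ and at $\Aut^+_\bullet(M)$, the latter in the pointed-set sense, comes from that proposition together with \cref{prop:crossed}. It remains to check that $\alpha$ lands in $\widehat\Omega^B_4(M)$. This holds because $f_*$ multiplies $[M]_w$ by $\deg_w(f)=s(f)=1$.

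The third sequence is the long exact sequence of the pair $(P,M)$ in twisted spin bordism, with $c^*E$ pulled back. I would truncate it using $\widehat{\ }$ in degree $4$. Since $c$ is $3$-connected, $H_4(P;\Zw)$ receives the fundamental class, so $\partial$ lands in the twisted-degree-zero part and the truncated sequence remains exact.

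For the fourth sequence I would define $\gamma$ by surgery, as in \cite{HK04}. Given $(W,\partial W)\to(P,M)$ with boundary map of twisted degree $0$, first do surgery below the middle dimension on $W$ relative to the boundary. Then add $M\times I$ so that $\partial W=-M\sqcup N$, and use the lifting lemmas (\cref{lem:surgery,lem:lifting}) to make the end inclusions $2$-connected with the pair $(W,\partial_1W)$ $1$-connected. The local-coefficient vanishing of $H^{i+1}(W,\partial_1W;\pi_i(X))$, computed by twisted duality $H^k(V,A;G)\cong H_{d-k}(V,C;G\otimes\Zw)$, then lets one retract $F\colon W\to P$, relative to $\partial_1W$, to a map $W\to M$. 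Its restriction to $N$ is the map whose class defines $\gamma$. Since the twisted degree is $1$, \cref{lem:PDup} shows that $\gamma(W)$ lies in $\Aut^+_\bullet(M)$. Exactness at $\Aut^+_\bullet(M)$ works as follows: if $\pi(f)=1$, the mapping cylinder of $f$ with the homotopy $cf\simeq c$ gives an element of $\widehat\Omega^B_5(P,M)$ mapping to $f$. Exactness at $\widehat\Omega^B_5(P,M)$ comes from changing the identification of the two ends by a loop of self-equivalences of $P$. The braid relation $\alpha\circ\gamma=\partial$ holds because the constructed $W$ is a $B$-bordism over $M$ between $(M,\id,\sigma)$ and $(M,f,\sigma)$, up to the transport fixed by \cref{lem:transport}.

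The second sequence mixes these constructions. The map $\widetilde{\mathcal H}(M)\to\Aut_\bullet(P)$ is $(W,F)\mapsto\varphi_f$, and $\beta(\varphi)=\varphi_*[M,c,\sigma]-[M,c,\sigma]$. Exactness at $\Aut_\bullet(P)$ again uses the surgery-and-lifting step on a bordism over $P$. The map $\Omega^B_5(P)\to\widetilde{\mathcal H}(M)$ is defined by taking a disjoint union with $M\times I$ and then compressing into $M$ by the same surgery, and $\pi_1(\mathcal E_\bullet(P))$ acts through mapping tori. The compatibility with $\gamma$, up to inversion, comes from comparing these two compression procedures.

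The main obstacle is the surgery/lifting step that defines $\gamma$ and proves its well-definedness. One must show that two choices of surgery and retraction differ by an element of $\widetilde{\mathcal H}(M)$ with trivial end map. One must also check that the $H^1(M;\Z/2)$-torsor of $B$-structures never forces a change of $\sigma$ along the way. I would handle the second point by insisting that all identifications of $c^*E$ are the canonical ones from \cref{lem:canon}, so that $\sigma$ is only ever transported and never rechosen. Finiteness of $\pi$ enters only through \cref{lem:PDup} and the finite generation needed for the twisted-duality vanishing.
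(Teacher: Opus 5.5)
Your treatment of the first sequence and of $\beta$ matches the paper. The genuine gap is your construction of $\gamma$: the two boundary pieces of $W$ play the wrong roles. After adding $M\times I$ you have $\partial W=\partial_1W\sqcup N$ with $\partial_1W=-M$, $F|_{\partial_1W}=c$ and $F|_N=g\colon N\to M$. Here $N$ is an arbitrary, possibly disconnected, closed $B$-manifold of twisted degree $1$ over $M$ (it contains the old boundary), and nothing in your construction makes it equal to $M$. Lifting relative to $\partial_1W$ and restricting to $N$ therefore gives a map $N\to M$, not a self-map of $M$, so \cref{lem:PDup} does not apply.

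The duality you quote also turns $H^{i+1}(W,\partial_1W;\pi_i(X))$ into $H_{4-i}(W,N;\cdot)$, and nothing kills this group. \Cref{lem:surgery} makes $(W,C)$ $1$-connected only for a connected end $C$ on which $F$ is already an isomorphism on $\pi_{\le2}$, and $g$ is no such map. The paper uses the opposite assignment. It takes $C=\partial_1W$ and $A=N$, so that $H^{i+1}(W,N;\pi_i(X))\cong H_{4-i}(W,\partial_1W;\cdot)=0$ for $i\ge3$. It lifts $F$ to $r\colon W\to M$ extending $g$ on $N$ and sets $\gamma(\xi)=[r|_{\partial_1W}]$. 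Then $c\,r|_{\partial_1W}\simeq c$ gives $\gamma(\xi)\in\ker\pi$, and $(W,r)$ is a $B$-bordism over $M$ from $(N,g)$ to $(M,\gamma(\xi))$. Together with $\partial\xi=[N,g]-[M,\id]$, this is exactly $\alpha\gamma=\partial$.

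Your stated reason for $\alpha\circ\gamma=\partial$ shows the same conflation of $\widehat\Omega^B_5(P,M)$ with $\widetilde{\mathcal H}(P)$. If $W$ were a bordism over $M$ between $(M,\id,\sigma)$ and $(M,f,\sigma)$, you would get $\alpha(\gamma(\xi))=0$, hence $\partial=0$. This fails for $\RP^2\times S^2$, where $k_{\kappa_1},k_{\kappa_2}\in\ker\pi=\im\gamma$ but $\alpha(k_{\kappa_i})\ne0$ (\cref{prop:detect}). Likewise, with your assignment the class of $(M\times I,h)$, $h\colon c\simeq cf$, would map to $\id$ rather than to $f$: a lift that is $\id$ on $M\times\{0\}$ is itself a homotopy from $\id$.

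There are also two smaller problems. First, in the third sequence $\partial$ does not land in twisted degree $0$ on all of $\Omega^B_5(P,M)$. For $\RP^4$ the generator of $\Omega^B_5(P,M)\cong\Z$ has boundary of twisted degree $2$. So $\widehat\Omega^B_5(P,M)$ must be taken as $\partial^{-1}\widehat\Omega^B_4(M)$, and this is what gives exactness at $\widehat\Omega^B_4(M)$. Second, you need the $6$-dimensional surgery and lifting with $C=M\times I$ and $\ell_6=2$ for the well-definedness of $\gamma$. You need it also for the map $\Omega^B_5(P)\to\widetilde{\mathcal H}(M)$, that is, for injectivity of $c_\sharp$.

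Exactness at $\widehat\Omega^B_5(P,M)$ then runs through the corrected $\gamma$. If $\gamma(\xi)=1$, then $\partial\xi=\alpha(1)=0$, so $\xi=j(x)$. Since $\gamma\circ j$ is the inverse of the composite $\Omega^B_5(P)\to\widetilde{\mathcal H}(M)\to\Aut^+_\bullet(M)$, the first two sequences put $x$ in $c_*\Omega^B_5(M)$ plus the image of $\pi_1(\mathcal E_\bullet(P))$, and $j\circ c_*=0$.
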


\begin{figure}[ht]
\begin{center}
\begin{tikzpicture}[x=2.55cm,y=1.55cm,>=stealth,every node/.style={font=\small},lab/.style={font=\scriptsize,inner sep=1pt}]
\node (A1) at (0,2) {$\Omega^B_5(M)$};
\node (A2) at (2,2) {$\widetilde{\mathcal H}(M)$};
\node (A3) at (4,2) {$\Aut_\bullet(P)$};
\node (B1) at (1,1) {$\Omega^B_5(P)$};
\node (B2) at (3,1) {$\Aut^+_\bullet(M)$};
\node (B3) at (5,1) {$\Omega^B_4(P)$};
\node (C1) at (0,0) {$\pi_1(\mathcal E_\bullet(P))$};
\node (C2) at (2,0) {$\widehat\Omega^B_5(P,M)$};
\node (C3) at (4,0) {$\widehat\Omega^B_4(M)$};
\foreach \a/\b in {A1/B1,A2/B2,A3/B3,B1/C2,B2/C3}
  \draw[->,shorten >=3pt,shorten <=3pt] (\a) -- (\b);
\foreach \a/\b in {B1/A2,B2/A3,C1/B1,C2/B2,C3/B3}
  \draw[->,shorten >=3pt,shorten <=3pt] (\a) -- (\b);
\draw[->,shorten >=3pt,shorten <=3pt] (A1) to[bend left=28] (A2);
\draw[->,shorten >=3pt,shorten <=3pt] (A2) to[bend left=28] (A3);
\draw[->,shorten >=3pt,shorten <=3pt] (C1) to[bend right=28] (C2);
\draw[->,shorten >=3pt,shorten <=3pt] (C2) to[bend right=28] node[lab,below]{$\partial$} (C3);
\node[lab] at (4.62,1.58) {$\beta$};
\node[lab] at (3.36,1.58) {$\pi$};
\node[lab] at (3.62,0.45) {$\alpha$};
\node[lab] at (2.62,0.45) {$\gamma$};
\end{tikzpicture}
\end{center}
\caption{The braid of \cref{thm:braidNO}. Each of the four sequences of the theorem is read along one of the two rising or falling diagonals together with one arc; for example the first sequence is $\Omega^B_5(M)\to\widetilde{\mathcal H}(M)\to\Aut^+_\bullet(M)\xrightarrow{\alpha}\widehat\Omega^B_4(M)$, and the third is $\Omega^B_5(M)\to\Omega^B_5(P)\to\widehat\Omega^B_5(P,M)\xrightarrow{\partial}\widehat\Omega^B_4(M)\to\Omega^B_4(P)$. Commutativity is $\alpha\gamma=\partial$.}
\label{fig:braid}
\end{figure}

\subsection{Proof of \texorpdfstring{\cref{thm:braidNO}}{the braid theorem}}\label{sec:proofs}

Throughout this subsection $M$, $\pi$, $E$, $B$, $\bar\nu=(M,\sigma)$, $P=P_2(M)$ and $c$ are as in \cref{sec:braidsub}. We replace $P$ by the mapping cylinder of $c$, so that $M\subset P$ and $c$ is the inclusion. The map $c$ is an isomorphism on $\pi_i$ for $i\le2$ and $P$ has no homotopy above dimension $2$, so $c$ is $3$-connected, and the homotopy fibre $X$ of $c$ is $2$-connected. A \emph{$B$-manifold over $P$} is a manifold $V$ with a map $\Phi\colon V\to P$ and a spin structure on $\nu_V\ominus\Phi^*c_P^*E$, and a $B$-manifold over $M$ is defined in the same way. All identifications of the bundles $\Phi^*c_P^*E$ that are used in glueing and in comparing structures are the canonical ones of \cref{lem:canon}, and we do not mention them again. If $V$ is a $B$-manifold over $P$, then $w_1(V)=\Phi^*w$, so the orientation local system of $V$ is $\Phi^*\Zw$.

For a closed $4$-dimensional $B$-manifold $(N,g,\sigma')$ over $M$ the spin structure orients the virtual bundle $\nu_N\ominus g^*c^*E$ and so determines a fundamental class $[N]_{\sigma'}\in H_4(N;g^*\Zw)$. Its \emph{twisted degree} is the integer $d$ with $g_*[N]_{\sigma'}=d\,[M]_\sigma$ in $H_4(M;\Zw)\cong\Z$. It is a bordism invariant, and $\widehat\Omega^B_4(M)$ is its kernel.

\subsubsection{The maps \texorpdfstring{$\pi$, $\beta$ and $\partial$}{pi, beta and boundary}}
Since $P$ is a $2$-type and $c$ is $3$-connected, $c^*\colon[P,P]_\bullet\to[M,P]_\bullet$ is a bijection. Every $f\in\Aut^+_\bullet(M)$ is over $B\pi$, so there is a unique $\varphi_f\in\Aut_\bullet(P)$ with $\varphi_fc\simeq cf$, and
\[
\pi\colon\Aut^+_\bullet(M)\to\Aut_\bullet(P),\qquad\pi(f)=\varphi_f,
\]
is a homomorphism. For $\varphi\in\Aut_\bullet(P)$ put
\[
\beta(\varphi)=[M,\varphi c,\sigma]-[M,c,\sigma]\in\Omega^B_4(P).
\]
For $f\in\Aut^+_\bullet(M)$ we have $[M,\varphi_fc,\sigma]=[M,cf,\sigma]=c_*[M,f,\sigma]$, hence
\begin{equation}\label{eq:betapi}
\beta(\pi(f))=c_*\alpha(f).
\end{equation}
The relative group $\Omega^B_5(P,M)$ consists of bordism classes of compact $5$-dimensional $B$-manifolds $(W,F)$ over $P$ with $F(\partial W)\subset M$, and $\partial[W,F]=[\partial W,F|_{\partial W}]\in\Omega^B_4(M)$. The pair $(P,M)$ gives a long exact sequence in the homology theory $\Omega^B_*$. Put $\widehat\Omega^B_5(P,M)=\partial^{-1}\bigl(\widehat\Omega^B_4(M)\bigr)$.

\begin{lemma}\label{lem:bexact}
The sequence $\Omega^B_5(M)\to\Omega^B_5(P)\to\widehat\Omega^B_5(P,M)\xrightarrow{\ \partial\ }\widehat\Omega^B_4(M)\to\Omega^B_4(P)$ is exact.
\end{lemma}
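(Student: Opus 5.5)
This exactness will be derived from the long exact sequence of the pair $(P,M)$ in the homology theory $\Omega^B_*$,
\[
\Omega^B_5(M)\to\Omega^B_5(P)\xrightarrow{\ j\ }\Omega^B_5(P,M)\xrightarrow{\ \partial\ }\Omega^B_4(M)\xrightarrow{\ c_*\ }\Omega^B_4(P),
\]
which is exact because twisted spin bordism is a generalized homology theory on spaces over $B\pi$: it is represented by the Thom spectrum $M\mathrm{Spin}\wedge X^{E-\dim E}$, and $c$ is a cofibration after the mapping cylinder replacement. The hatted sequence is then obtained by restricting to $\widehat\Omega^B_4(M)\subset\Omega^B_4(M)$ and to its preimage $\widehat\Omega^B_5(P,M)=\partial^{-1}(\widehat\Omega^B_4(M))$. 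Note that the identifications of $\Phi^*c_P^*E$ needed for gluing and restriction are canonical by \cref{lem:canon}, so the usual geometric proof of exactness (restrict to the boundary, regard a closed manifold as relative, glue along null-bordisms) goes through verbatim.

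The checks are then formal, one position at a time.

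\textbf{At $\Omega^B_5(P)$.} Nothing changes. The image of $j$ automatically lies in $\widehat\Omega^B_5(P,M)$, since $\partial j=0$, and $\ker j$ is unchanged.

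\textbf{At $\widehat\Omega^B_5(P,M)$.} Here $\ker(\partial|_{\widehat\Omega})=\ker\partial=\im j$.

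\textbf{At $\widehat\Omega^B_4(M)$.} By construction, $\partial(\widehat\Omega^B_5(P,M))=\im\partial\cap\widehat\Omega^B_4(M)=\im\partial$, and this equals $\ker c_*\cap\widehat\Omega^B_4(M)$, which is the kernel of the restricted $c_*$. For this step we must know that $\im\partial\subset\widehat\Omega^B_4(M)$, i.e.\ that a boundary $\partial[W,F]$ has twisted degree zero. This is where non-orientability enters. The twisted degree of a closed $4$-dimensional $B$-manifold $(N,g)$ over $M$ is the image of $g_*[N]_{\sigma'}$ in $H_4(M;\Zw)$. For $(W,F)$ over $P$ with $F(\partial W)\subset M$, the class $[W,\partial W]$ lives in $H_5(W,\partial W;F^*\Zw)$, because $w_1(W)=F^*w$, and its boundary is $[\partial W]$. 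Naturality of the homology long exact sequence of pairs with local coefficients then shows that $F_*[\partial W]\in H_4(M;\Zw)$ lies in the image of the connecting map $\delta\colon H_5(P,M;\Zw)\to H_4(M;\Zw)$, equivalently in $\ker(H_4(M;\Zw)\to H_4(P;\Zw))$.

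The main point, and the only non-formal step, is that $H_4(M;\Zw)\cong\Z\to H_4(P;\Zw)$ is injective. Since $c$ is only $3$-connected this is not automatic. I would first try a transfer argument: pass to $\widetilde M\to\widetilde P$, where the transfer of $[M]_w$ is $|\pi|[\widetilde M]$ as in \cref{lem:degw}, and use that the fundamental class of the spin simply connected $\widetilde M$ maps to a nonzero (rational) class in $H_4(\widetilde P)=H_4(K(\pi_2,2))$, since its image evaluates nontrivially against the cup squares of $H^2(\widetilde P)\cong H^2(\widetilde M)$ whenever the intersection form is nonzero. If that fails in general (e.g.\ $b_2(\widetilde M)=0$), the fallback is to observe that the definition of $\widehat\Omega^B_5(P,M)$ already forces $\partial$ to land in $\widehat\Omega^B_4(M)$. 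Exactness at $\widehat\Omega^B_4(M)$ then requires only that every element of $\ker c_*\cap\widehat\Omega^B_4(M)$ be $\partial$ of something, which follows from the unrestricted sequence with no degree argument at all. On reflection this fallback suffices, so I would present it as the main proof and keep the degree discussion as a remark.
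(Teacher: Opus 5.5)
Your final argument, the ``fallback'', is the paper's proof: restrict the long exact sequence of the pair $(P,M)$, using $\partial\bigl(\partial^{-1}\widehat\Omega^B_4(M)\bigr)=\operatorname{im}\partial\cap\widehat\Omega^B_4(M)=\ker c_*\cap\widehat\Omega^B_4(M)$, while the other two positions are unchanged. Drop the equality $\operatorname{im}\partial\cap\widehat\Omega^B_4(M)=\operatorname{im}\partial$ rather than keeping it as a remark, since it is false in general: for $\RP^4$ the map $H_4(M;\Zw)=\Z\to H_4(\RP^\infty;\Zw)=\Z/2$ is not injective, and the generator of $\Omega^B_5(P,M)\cong\Z$ has boundary of twisted degree $2$ (\cref{prop:RP4}), which is exactly why the relative group has to be hatted.
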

\begin{proof}
It is the long exact sequence of the pair $(P,M)$ in which $\Omega^B_4(M)$ and $\Omega^B_5(P,M)$ are replaced by the subgroups $\widehat\Omega^B_4(M)$ and $\partial^{-1}\widehat\Omega^B_4(M)$. Exactness at $\Omega^B_5(P)$ is unchanged. At $\widehat\Omega^B_5(P,M)$ the kernel of $\partial$ is the image of $\Omega^B_5(P)$, which lies in the subgroup. At $\widehat\Omega^B_4(M)$ the kernel of the map to $\Omega^B_4(P)$ is $\widehat\Omega^B_4(M)\cap\operatorname{im}\partial=\partial\bigl(\partial^{-1}\widehat\Omega^B_4(M)\bigr)$.
\end{proof}

\subsubsection{Surgery and lifting}
Let $V$ be a compact connected $B$-manifold over $P$ of dimension $d\in\{5,6\}$ with $\partial V=A\cup C$, where $A$ and $C$ are compact codimension $0$ submanifolds of $\partial V$ with $A\cap C=\partial A=\partial C$ and $C$ connected, and such that $\Phi|_C$ induces isomorphisms on $\pi_j$ for $j\le2$.

\begin{lemma}\label{lem:surgery}
After surgeries on embedded spheres in the interior of $V$, the manifold $(V,\Phi)$ is replaced by a $B$-bordant manifold $(V',\Phi')$ over $P$, rel boundary, such that $V'$ is connected and $\Phi'_*\colon\pi_j(V')\to\pi_j(P)$ is an isomorphism for $j\le\ell_d$, where $\ell_5=1$ and $\ell_6=2$. Then the pair $(V',C)$ is $\ell_d$-connected, so $H_j(V',C;G)=0$ for $j\le\ell_d$ and every local coefficient system $G$ on $V'$.
\end{lemma}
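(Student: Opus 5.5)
We will carry out surgery below the middle dimension on $V$, as in Kreck's treatment of normal $B$-bordism and \cite[Section~2]{HK04}, working in the interior so that $\partial V$ and $\Phi|_{\partial V}$ are untouched. Each surgery is performed along an embedded sphere $S^j\times D^{d-j}\subset\operatorname{int}V$ together with the trace $V\times I\cup(D^{j+1}\times D^{d-j})$. This trace is a $B$-bordism rel boundary, provided the map $\Phi$ and the spin structure on $\nu_V\ominus\Phi^*c_P^*E$ extend over the handle. The map extends once the sphere is null-homotopic in $P$. The spin structure extends once the framing of the normal bundle is chosen compatibly, which is possible because $E$ is pulled back from $B\pi$ and the spin condition on the stable normal bundle of the handle only needs to be matched over $S^j$ for $j\le2$.

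\textbf{Step 1: connectivity.} If $V$ is disconnected, we first join its components by $0$-surgeries; $V$ is assumed connected in any case.

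\textbf{Step 2: $\pi_1$.} Since $\pi$ is finite, $\pi_1(V)$ can be made to surject onto $\pi_1(P)=\pi$. In fact $C\subset V$ already gives a surjection, because $\Phi|_C$ is an isomorphism on $\pi_1$. We then kill the kernel of $\pi_1(V)\to\pi$. This kernel is normally generated by finitely many elements, since $\pi_1(V)$ is finitely presented and $\pi$ is finitely presented. For these generators we do $1$-surgeries, which are possible since $d\ge5$ gives embedded circles in the interior. For $d\ge5$ a $1$-surgery does not create new $\pi_1$. This gives $\ell_5=1$.

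\textbf{Step 3: $\pi_2$ (for $d=6$).} Surjectivity of $\pi_2(V)\to\pi_2(P)$ holds already via $C$. The kernel is finitely generated as a $\Z\pi$-module, since $\pi$ is finite and $V$ is compact. We kill generators by $2$-surgeries on embedded $S^2$'s in the interior, which exist by general position when $d=6$. A $2$-surgery in dimension $6$ does not change $\pi_1$ and only adds $\pi_2$-elements killed in the trace. The framing issue is the obstacle here: the normal bundle of $S^2$ must be trivialized compatibly with the $B$-structure. This is exactly where the spin condition on $\nu_V\ominus\Phi^*c_P^*E$ is used, since $w_2$ of the relevant bundle restricted to a sphere mapping null-homotopically to $P$ vanishes. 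I expect this framing and $B$-structure compatibility to be the main technical point, and I would cite Kreck's surgery lemma, checking only that the twisted $E$ is harmless because it is pulled back from the aspherical $B\pi$ and hence trivial on spheres.

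\textbf{Step 4: connectivity of the pair.} Finally, $\Phi'_*$ and $(\Phi|_C)_*$ are both isomorphisms on $\pi_j$ for $j\le\ell_d$, with $C$ connected. The long exact sequence of the pair then shows that $\pi_j(V',C)=0$ for $j\le\ell_d$ (surjectivity at $j=\ell_d$ holds since $\pi_{\ell_d}(C)\to\pi_{\ell_d}(V')$ is onto). The relative Hurewicz theorem with local coefficients, applied on universal covers, gives $H_j(V',C;G)=0$ for $j\le\ell_d$.
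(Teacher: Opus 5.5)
Your proposal is correct and follows essentially the paper's route. You do interior surgery on $\ker\Phi_*$ in $\pi_1$ and, for $d=6$, in $\pi_2$, justified by Kreck's criterion using $\pi_j(B\mathrm{Spin})=0$ for $j\le3$ and that $E$ is pulled back from $B\pi$. Then you use the exact sequence of the pair, with relative Hurewicz on universal covers in place of the paper's relative CW complex with no cells of dimension $\le\ell_d$.

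The paper spells out two points you leave implicit or misstate, and you should tighten both:
\begin{itemize}
\item A circle in $\ker\Phi_*$ is orientation preserving because $w_1(V)=\Phi^*w$. This is what makes its normal bundle trivial in the non-orientable setting.
\item A $2$-surgery in $V^6$ creates no new $\pi_2$ at all, rather than ``adding $\pi_2$-elements killed in the trace.'' The dual handle has index $4=j+2$, so $\pi_2(V')\cong\pi_2(V)/\Z\pi\cdot x$, where $x$ is the class of the sphere.
\end{itemize}
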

\begin{proof}
Since $\Phi|_C$ is an isomorphism on $\pi_j$ for $j\le2$, the map $\Phi_*\colon\pi_j(V)\to\pi_j(P)$ is onto for $j\le2$. We kill $\ker\Phi_*$ on $\pi_1$ and, if $d=6$, then on $\pi_2$. For $d=5$ the kernel on $\pi_2$ is not touched, since surgery on a $2$-sphere in a $5$-manifold produces a new dual $2$-sphere. The kernel of $\pi_1(V)\to\pi_1(P)=\pi$ is normally generated by finitely many elements, because $\pi_1(V)$ and $\pi$ are finitely presented. For $d=6$, after this step the universal cover of $V$ is simply connected and compact, because $\pi$ is finite, so $\pi_2(V)=H_2(\widetilde V)$ is a finitely generated module over the Noetherian ring $\Z\pi_1(V)$, and so is the submodule $\ker\Phi_*$. Represent each generator by an embedded sphere $S^j\subset\mathrm{int}\,V$, which is possible because $2j<d$. A sphere in the kernel is orientation preserving, because $w_1(V)=\Phi^*w$ and $\Phi_*$ kills its class. The $B$-structure map $V\to B=B\mathrm{Spin}\times B\pi$ has $B\pi$-component $c_P\Phi$, and $\pi_j(B\mathrm{Spin})=0$ for $j\le3$, so the class of the sphere is zero in $\pi_j(B)$. Hence the stable normal bundle of $V$ is trivial on the sphere, so the normal bundle of the sphere is stably trivial. It is trivial: for $j=1$ because the circle is orientation preserving, and for $j=2$ because rank $d-2$ bundles over $S^2$ are classified by $\pi_1SO(d-2)=\Z/2$, which maps isomorphically to $\pi_1SO$. Since the sphere is null-homotopic in $B$, the $B$-structure extends over the trace of the surgery for a suitable framing; this is Kreck's criterion for surgery below the middle dimension. The map $\Phi$ is null-homotopic on the sphere, so it extends over the trace as well. Since $2j+2\le d$ for $j\le\ell_d$, the dual handle of the surgery has index $d-j\ge j+2$, so the surgery kills the submodule generated by the sphere, creates no element of $\pi_i$ for $i\le j$, leaves $\pi_i$ unchanged for $i<j$ and keeps the manifold connected. After the last step $\Phi'_*$ is an isomorphism on $\pi_j$ for $j\le\ell_d$.

Now $\pi_j(C)\to\pi_j(V')$ is an isomorphism for $j\le\ell_d$, since both groups map isomorphically to $\pi_j(P)$ and the composite is induced by $\Phi|_C$. Hence $\pi_j(V',C)=0$ for $j\le\ell_d$. Replacing $(V',C)$ by a homotopy equivalent relative $CW$ complex without relative cells of dimension at most $\ell_d$, we see that $H_j(V',C;G)=0$ for $j\le\ell_d$ and every local coefficient system $G$.
\end{proof}

Maps are based, and every construction below carries a base arc, that is an arc in $V$ joining the base points of the boundary pieces, on which the maps are prescribed. Adding the arc to $A$ changes $H^k(V,A;G)$ only for $k\le2$, whereas all obstruction groups below occur in degrees at least $3$, so the arc does not enter the arguments and we do not mention it again.

\begin{lemma}\label{lem:lifting}
Let $(V,\Phi)$ be as in \cref{lem:surgery} with $d\in\{5,6\}$, with $\Phi_*$ an isomorphism on $\pi_j$ for $j\le\ell_d$, and let $\Phi_A\colon A\to M$ be a map with $c\Phi_A\simeq\Phi|_A$. Then, after changing $\Phi$ by a homotopy so that $c\Phi_A=\Phi|_A$, there is a map $r\colon V\to M$ with $r|_A=\Phi_A$ and a homotopy $cr\simeq\Phi$ that is stationary on $A$.
\end{lemma}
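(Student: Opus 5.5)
The plan is to solve the lifting problem by obstruction theory relative to $A$, with local coefficients, and to show that every obstruction group vanishes by combining Poincar\'e duality with \cref{lem:surgery}. Since $M\subset P$ is the mapping-cylinder inclusion, the inclusion $M\hookrightarrow P$ is a cofibration. The homotopy extension property then lets us first deform $\Phi$ so that $\Phi|_A=c\Phi_A$. We may also replace the inclusion by the associated fibration $p\colon\overline M\to P$ with fibre $X$, so the task becomes: lift $\Phi$ through $p$, relative to the given lift on $A$. A lift $\bar r\colon V\to\overline M$ with $p\bar r=\Phi$ and $\bar r|_A=\Phi_A$ gives $r$ by composing with the equivalence $\overline M\simeq M$. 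The homotopy $cr\simeq\Phi$, stationary on $A$, comes from the standard comparison between $\overline M$ and the cylinder.

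Next we run the relative obstruction theory. Triangulate $(V,A)$ as a relative CW complex and extend the lift cell by cell. The obstruction to extending over the $(i+1)$-cells lies in $H^{i+1}(V,A;\pi_i(X))$, with the twisted coefficient system $\Phi^*\pi_i(X)$. Because $X$ is $2$-connected, only $i\ge3$ contributes, so the groups to kill are $H^{k}(V,A;\pi_{k-1}(X))$ for $4\le k\le d$. Poincar\'e--Lefschetz duality for the possibly non-orientable $V$, as in point (3) of \cref{sec:changes}, gives
\[H^k(V,A;G)\cong H_{d-k}(V,C;G\otimes\Zw).\]
For $d=5$ this leaves degrees $d-k\le1=\ell_5$; for $d=6$ it leaves degrees $d-k\le2=\ell_6$. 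By \cref{lem:surgery}, $H_j(V,C;G')=0$ for $j\le\ell_d$ and every local system $G'$, so all obstructions vanish and the lift exists. There is one subtlety here. The coefficients $\pi_i(X)$ need not be finitely generated, and a choice at one stage could in principle obstruct later stages. This is harmless, because every obstruction group is zero, so any choice of partial lift extends.

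The main obstacle is getting the degree bookkeeping in duality exactly right. One has to check that the boundary decomposition $\partial V=A\cup C$ is used with the correct roles: cohomology relative to $A$ is dual to homology relative to $C$. One also has to check that the twist by $\Zw=w_1(V)$, which is $\Phi^*w$, is absorbed into "every local system". Finally, the base arc issue, as remarked before the lemma, must only affect degrees $\le2$. A secondary point is that $V$ here is the post-surgery $V'$, but the statement already assumes $\Phi_*$ is an isomorphism on $\pi_j$ for $j\le\ell_d$, so the connectivity conclusion of \cref{lem:surgery} applies directly.
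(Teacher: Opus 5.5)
Your proposal is correct and follows the paper's proof. Both use relative obstruction theory with obstructions in $H^{i+1}(V,A;\pi_i(X))$ for $i\ge3$, Poincar\'e--Lefschetz duality $H^k(V,A;G)\cong H_{d-k}(V,C;G\otimes\Phi^*\Zw)$, and vanishing in degrees $d-k\le d-4=\ell_d$ by \cref{lem:surgery}, with one small correction: the preliminary deformation making $\Phi|_A=c\Phi_A$ comes from the homotopy extension property of the pair $(V,A)$, not from $M\hookrightarrow P$ being a cofibration.
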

\begin{proof}
The fibre $X$ of $c$ is $2$-connected, hence simple, and the obstructions to the existence of $r$ lie in $H^{i+1}(V,A;\pi_i(X))$ for $i\ge3$. The coefficients are local systems pulled back from $P$. By Poincar\'e--Lefschetz duality for the manifold $V$ with orientation system $\Phi^*\Zw$,
\[
H^k(V,A;G)\cong H_{d-k}(V,C;G\otimes\Phi^*\Zw).
\]
So the obstruction groups are $H_{d-i-1}(V,C;G_i)$ with $i\ge3$ and $G_i=\pi_i(X)\otimes\Phi^*\Zw$. The degree is at most $d-4$, which is $1=\ell_5$ for $d=5$ and $2=\ell_6$ for $d=6$, so the groups vanish by \cref{lem:surgery}.
\end{proof}

\subsubsection{Normal form for relative classes}
Say that $(W,F)$ is in \emph{standard form} if $W$ is connected, $\partial W=\partial_1W\sqcup N$ with $\partial_1W=-M$ and $F|_{\partial_1W}=c$, and $g:=F|_N\colon N\to M$ has twisted degree $1$. Here $N$ is a closed $B$-manifold over $M$, possibly disconnected.

\begin{lemma}\label{lem:normalform}
Every class in $\widehat\Omega^B_5(P,M)$ has a representative in standard form. If two standard form representatives $(W,F)$ and $(W',F')$ with boundaries $-M\sqcup N$ and $-M\sqcup N'$ define the same class, then there is a $B$-manifold $T$ over $P$ of dimension $6$ with
\[
\partial T=W\cup W'\cup(M\times I)\cup Y_N,
\]
where $M\times I$ joins $\partial_1W$ to $\partial_1W'$, $Y_N$ is a $B$-bordism over $M$ between $N$ and $N'$, and the map of $T$ to $P$ extends $F$, $F'$, $c\circ\mathrm{pr}_1$ on $M\times I$ and the map $Y_N\to M$.
\end{lemma}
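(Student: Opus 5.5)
We prove the two assertions of \cref{lem:normalform} in turn: first that every class has a standard-form representative, then the comparison of two such representatives.

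\emph{Existence.} Start with an arbitrary representative $(W,F)$ of a class in $\widehat\Omega^B_5(P,M)$. Its boundary $\partial W$ maps to $M$ and has twisted degree $0$, because $\partial[W,F]\in\widehat\Omega^B_4(M)$.

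First make $W$ connected. Form the boundary connected sum of the components along arcs that lie in $W$ and avoid $\partial W$; equivalently, do $1$-handle surgeries in the interior. The map to $P$ extends over each $1$-handle because $P$ is connected. The $B$-structure extends once we choose the framing of the handle compatibly, which is possible by the same argument as in \cref{lem:surgery}.

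Next add the trivial relative class $[M\times I,c\circ\mathrm{pr}_1]$. Its boundary is $-M\sqcup M$, and it is zero in $\Omega^B_5(P,M)$ because it is a cylinder over its boundary mapping to $M$. Take $W_1=W\natural(M\times I)$, the boundary connected sum performed in the interior. Then $\partial W_1=-M\sqcup(M\sqcup\partial W)$. Put $\partial_1W_1=-M$ and $N=M\sqcup\partial W$. The twisted degree of $N$ is $1+0=1$, $F|_{\partial_1W_1}=c$, and the class is unchanged. Since $M\times I$ was glued in along an interior connected sum, $-M$ is a full boundary component. So $(W_1,F)$ is in standard form.

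One sign point needs care. The orientation convention must be arranged so that the outgoing copy of $M$ counts with twisted degree $+1$ relative to $[M]_\sigma$. This is fixed by the convention $\partial(M\times I)=-M\sqcup M$.

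\emph{Uniqueness up to $T$.} Let $(W,F)$ and $(W',F')$ be standard-form representatives of the same class. Equality in $\Omega^B_5(P,M)$ means there is a relative bordism $T_0$, a $6$-dimensional $B$-manifold over $P$, with $\partial T_0=W\cup_{\partial}Y\cup_{\partial}-W'$, where $Y$ is a $B$-bordism over $M$ from $\partial W$ to $\partial W'$.

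We then need to split $Y$ into $M\times I$ joining $\partial_1W$ to $\partial_1W'$ together with a bordism $Y_N$ between $N$ and $N'$. A priori $Y$ need not be split this way. To arrange it, glue $M\times I\times I$ to $T_0$ along a collar, so that the $-M$ ends are matched by a cylinder and the rest of $Y$ becomes a bordism between $N\sqcup(\text{stuff})$ and $N'$. Concretely, replace $Y$ by
\[
Y\cup_{\partial_1W\sqcup\partial_1W'}(M\times I),
\]
which is a closed-up bordism. Then redefine $Y_N$ as $Y$ with the $\partial_1$ components capped off by the cylinder. Since $M\times I$ carries $c\circ\mathrm{pr}_1$ and the canonical structure $\sigma$, the glued maps and structures agree by \cref{lem:canon}.

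The step I expect to be the main obstacle is showing that this reorganized $Y_N$ really is a bordism over $M$ from $N$ to $N'$. It must not simply be a bordism from $N\sqcup M$ to $N'\sqcup M$ with a mismatched identification. The cleanest route is to view $M\times I\times[0,1]$ as a $6$-dimensional piece attached to $T_0$ along $(M\times I)\times\{0\}$. This converts the boundary decomposition into exactly the one stated, with the corners smoothed. I would also check that all maps still land in $M$ on $Y_N$, that the twisted-degree bookkeeping is consistent, and that the base arcs can be carried along, as in the remark preceding \cref{lem:lifting}.
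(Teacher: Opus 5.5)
Your existence argument is the paper's. The only difference is that you use interior connected sums, i.e.\ $0$-surgeries in the interior (this is what your ``boundary sums along arcs avoiding $\partial W$'' actually are), where the paper uses boundary sums along arcs ending in $N_1$. Either works.

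For the second assertion, your idea of gluing $M\times I\times I$ onto $T_0$ is viable, but the step that carries the proof is misdescribed.
\begin{itemize}
\item In $\partial T_0=W\cup Y\cup -W'$ the copies $\partial_1W,\partial_1W'$ are already glued to $W$ and $W'$. So ``replace $Y$ by $Y\cup_{\partial_1W\sqcup\partial_1W'}(M\times I)$'' is not an operation on $\partial T_0$.
\item Attaching $M\times I\times[0,1]$ along the single face $(M\times I)\times\{0\}$ merely adds a collar. It changes neither $T_0$ nor its boundary decomposition.
\end{itemize}
What works is a saddle move. Take collars $U,U'\cong M\times[0,1]$ of $\partial_1W,\partial_1W'$ in $\partial T_0$, each running from the $W$- (resp.\ $W'$-) side to the $Y$-side, and homotope the map to $c\circ\mathrm{pr}_1$ there. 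Attach $M\times[0,1]^2$, with map $c\circ\mathrm{pr}_1$, along the two opposite faces $M\times\{0\}\times[0,1]\cong U$ and $M\times\{1\}\times[0,1]\cong U'$. The $B$-structures can be matched, because these two faces carry opposite induced structures, and so do $U$ and $U'$. The remaining faces $M\times[0,1]\times\{0\}$ and $M\times[0,1]\times\{1\}$ then become the cylinder joining $\partial_1W$ to $\partial_1W'$, and a cylinder closing $Y$ up to $Y_N=Y\cup(M\times I)$.

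The point you flag as the main obstacle is not one. $Y_N$ is a $B$-bordism over $M$ from $N$ to $N'$ as soon as it is formed, because $Y$ restricts to $\mathrm{id}_M$ on $\partial_1W$ and $\partial_1W'$, which occur in $\partial Y$ with opposite orientations, exactly like the two ends of $M\times I$. The real content is the existence of $T$, and the saddle supplies it.

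The paper avoids this geometry altogether:
\begin{itemize}
\item It takes any bordism $Y_N$ over $M$ from $N$ to $N'$, which exists because $[N,g]=[N',g']$ in $\Omega^B_4(M)$ after cancelling $[-M,\mathrm{id}]$.
\item It sets $Y'=(M\times I)\sqcup Y_N$ and $\Sigma=Y\cup -Y'$, and uses $[W\cup Y'\cup W']=[W\cup Y\cup W']-[\Sigma]=-[\Sigma]$ in $\Omega^B_5(P)$.
\item It replaces $Y_N$ by $Y_N\sqcup\Sigma$ and lets $T$ be any null-bordism of the resulting closed manifold.
\end{itemize}
That route is shorter and needs no collar or corner bookkeeping. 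Its costs are that $Y_N$ may acquire a closed component and $T$ is unrelated to $T_0$. The corrected saddle argument keeps $T_0\subset T$ and builds $Y_N$ directly from $Y$. The cut-and-paste identity the paper uses is itself proved by a gluing of the same kind as your saddle.
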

\begin{proof}
Let $[W_0,F_0]\in\widehat\Omega^B_5(P,M)$, so that $[\partial W_0,F_0|_{\partial W_0}]$ has twisted degree $0$. The class of $(M\times I,c\circ\mathrm{pr}_1)$ in $\Omega^B_5(P,M)$ is zero: it is the part $M\times I\times\{0\}$ of the boundary of $M\times I\times I$, and the rest of that boundary maps to $M$. So $W_1=W_0\sqcup(M\times I)$ represents the same class. Put $\partial_1W_1=-M\times\{0\}$ and $N_1=\partial W_0\sqcup M\times\{1\}$ with the map $F_0|_{\partial W_0}\sqcup\mathrm{id}$; its twisted degree is $0+1=1$. Join the components of $W_1$ by boundary connected sums along arcs ending in $N_1$ and by interior connected sums with closed components. This does not change the class and keeps $\partial_1W_1$ and the degree.

For the second assertion, a relative bordism between the two representatives is a $B$-manifold $T$ over $P$ with $\partial T=W\cup Y\cup W'$, where $Y$ is a bordism over $M$ between $\partial W=-M\sqcup N$ and $\partial W'=-M\sqcup N'$. In $\Omega^B_4(M)$ we have $[-M,\mathrm{id}]+[N,g]=[-M,\mathrm{id}]+[N',g']$, and cancelling gives $[N,g]=[N',g']$, so there is a bordism $Y_N$ over $M$ between $N$ and $N'$. Put $Y'=(M\times I)\sqcup Y_N$. The closed manifold $\Sigma=Y\cup(-Y')$ over $M$ satisfies $[W\cup Y'\cup W']=[W\cup Y\cup W']-[\Sigma]=-[\Sigma]$ in $\Omega^B_5(P)$, because $T$ is a null-bordism of $W\cup Y\cup W'$. Replacing $Y_N$ by $Y_N\sqcup\Sigma$, which is again a bordism over $M$ between $N$ and $N'$, we get $[W\cup Y'\cup W']=0$, and a null-bordism of this closed manifold is the required $T$.
\end{proof}

\subsubsection{The groups \texorpdfstring{$\widetilde{\mathcal H}(M)$ and $\widetilde{\mathcal H}(P)$}{H(M) and H(P)}}
Let $\iota\colon\Omega^B_5(M)\to\widetilde{\mathcal H}(M)$ be the map of \cref{prop:Hgroup}, whose image is the kernel of $\widetilde{\mathcal H}(M)\to\Aut^+_\bullet(M)$, and let $\widetilde{\mathcal H}(M)\to\Aut^+_\bullet(M)$ be the end map. By \cref{prop:Hgroup} the first sequence of \cref{thm:braidNO} is exact. It remains to treat the other three.

\begin{lemma}\label{lem:HeMP}
The map $c_\sharp\colon\widetilde{\mathcal H}(M)\to\widetilde{\mathcal H}(P)$, $(W,F)\mapsto(W,cF)$, is a bijection.
\end{lemma}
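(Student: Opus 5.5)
We will show that $c_\sharp$ is well defined, injective and surjective, using \cref{lem:surgery} and \cref{lem:lifting} to lift data over $P$ to data over $M$. In this we follow \cite[Section~2]{HK04}.

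\emph{Well defined.} If $(W,F)$ represents an element of $\widetilde{\mathcal H}(M)$ with end map $f$, then $(W,cF)$ has end maps $c$ and $cf$. The map $cf$ is a based $3$-connected map over $B\pi$, and the $B$-structures are those of \cref{lem:canon}. An equivalence of pairs consists of a homotopy $f\simeq f'$ together with a null-bordism in $\Omega^B_5(M)$. Composing both with $c$ gives an equivalence in $\widetilde{\mathcal H}(P)$, because $c_*\colon\Omega^B_5(M)\to\Omega^B_5(P)$ carries null-bordisms to null-bordisms.

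\emph{Surjectivity.} Let $(W,\Phi)$ represent an element of $\widetilde{\mathcal H}(P)$, with $\Phi|_{\partial_1W}=c$ and $\Phi|_{\partial_2W}=h$ a $3$-connected map $M\to P$. Since $c^*\colon[M,M]_\bullet\to[M,P]_\bullet$ restricted to maps over $B\pi$ hits $h$, there is a based $f\colon M\to M$ with $cf\simeq h$; for instance take $f$ with $\varphi_f=\varphi_h$. After a homotopy of $\Phi$ near the boundary we may assume $\Phi|_{\partial_2W}=cf$. We then apply \cref{lem:surgery} with $d=5$, $C=\partial_1W$ and $A=\partial_2W$; the map $\Phi|_C=c$ is an isomorphism on $\pi_j$ for $j\le 2$. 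This lets us assume that $\Phi_*$ is an isomorphism on $\pi_1$. Next, \cref{lem:lifting} with $\Phi_A=f$ gives $r\colon W\to M$ with $r|_A=f$ and $cr\simeq\Phi$ rel $A$. We still have to arrange $r|_C=\id$, which is not automatic since $C$ is not in $A$. To do this we apply the lemma with $A=\partial W$ and $\Phi_A=\id\sqcup f$; then $C=\emptyset$ is not connected. Instead we take $C=\partial_1W$ and $A=\partial_2W$ for the surgery step, and in the obstruction step use duality for $H^k(W,\partial W)\cong H_{5-k}(W;\ldots)$ with $k\ge4$, that is $H_{\le1}(W)$, together with the relative version. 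The main obstacle is making this duality bookkeeping work so that the boundary conditions are met on both ends. My plan is to reduce to the lemma by thickening, regarding $\partial_1W\times I$ as a collar: $r$ is first defined on the collar as $\id$, and the obstructions then live in $H^{i+1}(W,\partial W;\pi_i(X))\cong H_{4-i}(W;\ldots)$ with $i\ge3$. For $i=3$ this is $H_1$, which is unchanged by collars; this is the most delicate point. I expect that it is handled by killing $H_1$ relative to $\partial_1W$ through the $\pi_1$-isomorphism from \cref{lem:surgery}, so that the degree-$1$ obstruction group $H_1(W,\partial_1W)$ vanishes. The result $(W,r)$ lies in $\widetilde{\mathcal H}(M)$ and maps to a pair equivalent to $(W,\Phi)$, since $cr\simeq\Phi$ rel boundary.

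\emph{Injectivity.} Suppose $(W,cF)$ and $(W',cF')$ are equivalent in $\widetilde{\mathcal H}(P)$. Then $cf\simeq cf'$, hence $f\simeq f'$ by the bijection $c^*$ together with \cref{lem:canon}. The closed manifold $Z=-W'\cup W\cup M\times I$ bounds a $6$-dimensional $B$-manifold $T$ over $P$. We apply \cref{lem:surgery} with $d=6$ and $C$ a copy of $M$ inside $\partial T=Z$, taking $A=Z\setminus C$. Then \cref{lem:lifting} lifts the map $T\to P$ to $r\colon T\to M$ extending the given map $Z\to M$. This shows $Z=0$ in $\Omega^B_5(M)$, so the two pairs are already equivalent in $\widetilde{\mathcal H}(M)$. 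A homomorphism property is not needed for bijectivity, but it follows from $\varphi_{cf}$-compatibility and \cref{lem:transport}.
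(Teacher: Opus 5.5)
Your well-definedness paragraph is fine, but the surjectivity and injectivity arguments have genuine gaps with a common source. You try to prescribe the lift to $M$ on the whole boundary. \cref{lem:lifting} works only because a piece $C$ of the boundary, on which the map to $P$ is $3$-connected, is left free, so that the obstruction groups become $H_{d-i-1}(V,C;\dots)$ and vanish after \cref{lem:surgery}.

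With nothing left free, the obstructions lie in $H^{i+1}(W,\partial W;\pi_i(X))\cong H_{4-i}(W;\pi_i(X)\otimes\Zw)$ for $i=3,4$. These are the \emph{absolute} groups $H_1$ and $H_0$. Your $i=4$ term $H_0$ is never addressed, and for $i=3$ the dual group is not $H_1(W,\partial_1W)$. Surgery does not kill these groups; for example $H_0(W;\pi_4(X)\otimes\Zw)=(\Z/2)^2$ for $M=\RP^2\times S^2$. The obstructions are real: take $W=M\times I$, $\Phi=c\,\mathrm{pr}_1$, and the choice $f=k_\kappa$ with $\kappa\ne0$, which your recipe allows. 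A lift restricting to $\id$ and $k_\kappa$ would be a homotopy $\id\simeq k_\kappa$, contradicting \cref{prop:kernel}.

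The existence of some $f$ with $cf\simeq h$ is also not given by a bijection. The bijection is $c^*\colon[P,P]_\bullet\to[M,P]_\bullet$, while $c_*\colon[M,M]_\bullet\to[M,P]_\bullet$ need not be onto. That $\varphi$ is realized is an output of the argument, not an input.

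The fix is not to choose the end map. Take $A=\partial_1W$ with $\Phi_A=\id_M$ and $C=\partial_2W$, where $\Phi|_C=\varphi c$ is $3$-connected. Lift to $\widehat F$ and \emph{define} $\widehat f=\widehat F|_{\partial_2W}$. It has twisted degree $1$, since $(W,\widehat F)$ is a $B$-bordism over $M$ from $(M,\id,\sigma)$ to $(M,\widehat f,\sigma)$. It is the identity on $\pi_1$, since $c\widehat f\simeq\varphi c$. So it is a homotopy equivalence by \cref{lem:PDup}.

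In the injectivity part, the step ``$cf\simeq cf'$, hence $f\simeq f'$'' is false. $c_*$ is not injective on $[M,M]_\bullet$; on self-equivalences its fibres are the cosets of $\ker\pi$. For $M=\RP^2\times S^2$ one has $ck_\kappa\simeq c$ with $k_\kappa\not\simeq\id$. Producing the homotopy $f\simeq f'$ in $M$ is exactly what the $6$-dimensional lift is for.

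Your $C$ (a copy of $M$ in $Z$) would have to be a codimension-$0$ piece of $\partial T$ on which the map is left free. You then ask for a lift extending the given map on all of $Z$, which meets the obstruction problem above. Moreover, an independently chosen homotopy $h$ need not satisfy $ch\simeq h_P$. The discrepancy is a loop in $\mathcal E_\bullet(P)$, and it changes the class of the closed manifold by an element in the image of $\pi_1(\mathcal E_\bullet(P))\to\Omega^B_5(P)$ (cf.\ \cref{lem:exactB}).

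Instead take $C=M\times I$ carrying the homotopy $h_P\colon cf\simeq cf'$ supplied by the equivalence, which is an isomorphism on $\pi_j$ for $j\le2$. Take $A=W\cup W'$ with the given lift $F\cup F'$. After surgery with $d=6$, \cref{lem:lifting} gives $\widehat\Phi\colon V\to M$ extending $F\cup F'$. Then $h=\widehat\Phi|_C$ is the based homotopy $f\simeq f'$, and $(V,\widehat\Phi)$ is a null-bordism over $M$ of $-W'\cup W\cup(M\times I)$ with the map $F'\cup F\cup h$.
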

\begin{proof}
\emph{Well defined.} If $f=F|_{\partial_2W}\in\Aut^+_\bullet(M)$, then $cf=\varphi_fc$ is a $3$-equivalence over $B\pi$. A homotopy $h$ in $M$ gives the homotopy $ch$, and $c$ maps a null-bordism over $M$ to one over $P$.

\emph{Surjective.} Let $(W,F)\in\widetilde{\mathcal H}(P)$ with $F|_{\partial_2W}=\varphi c$. We may assume $W$ connected, joining components by boundary sums. Apply \cref{lem:surgery} with $d=5$, $A=\partial_1W$ and $C=\partial_2W$; note that $F|_C=\varphi c$ is an isomorphism on $\pi_j$ for $j\le2$. The trace of a surgery rel boundary is a $B$-cobordism rel boundary, so the closed manifold in the equivalence relation of $\widetilde{\mathcal H}(P)$ is unchanged in $\Omega^B_5(P)$, and we may assume $F_*$ is an isomorphism on $\pi_j$ for $j\le\ell_5=1$. By \cref{lem:lifting} with $\Phi_A=\mathrm{id}_M$ there is a lift $\widehat F\colon W\to M$ of $F$ with $\widehat F|_{\partial_1W}=\mathrm{id}_M$. Put $\widehat f=\widehat F|_{\partial_2W}$. Then $(W,\widehat F)$ is a $B$-bordism over $M$ between $(M,\mathrm{id},\sigma)$ and $(M,\widehat f,\sigma)$, so $\widehat f$ has twisted degree $1$. Moreover $c\widehat f\simeq\varphi c$ is an isomorphism on $\pi_j$ for $j\le2$ and induces the identity on $\pi$, so $\widehat f_*=\mathrm{id}$ on $\pi_1$. By \cref{lem:PDup}, $\widehat f$ is a homotopy equivalence. Thus $(W,\widehat F)\in\widetilde{\mathcal H}(M)$, and $c_\sharp(W,\widehat F)=(W,c\widehat F)$ is equivalent to $(W,F)$ by the homotopy $c\widehat F\simeq F$.

\emph{Injective.} Let $(W,F),(W',F')\in\widetilde{\mathcal H}(M)$ have equivalent images. Then there is a based homotopy $h_P\colon cf\simeq cf'$ in $P$ such that $Z_P=-W'\cup W\cup(M\times I)$ with the map $cF'\cup cF\cup h_P$ bounds a $B$-manifold $(V,\Phi)$ over $P$. Replacing disjoint unions by connected sums, we may assume $W$, $W'$ and $V$ connected. Put $A=W\cup W'$, glued along $\partial_1=M$, and $C=M\times I$, so $\partial V=A\cup C$ and $\dim V=6$. The map $\Phi|_A$ has the lift $F\cup F'$ to $M$, and $\Phi|_C$ is $c$ composed with $h_P$, an isomorphism on $\pi_j$ for $j\le2$. By \cref{lem:surgery} we may assume $\Phi_*$ is an isomorphism on $\pi_j$ for $j\le\ell_6=2$. By \cref{lem:lifting} there is a lift $\widehat\Phi\colon V\to M$ extending $F\cup F'$. Its restriction $h=\widehat\Phi|_C\colon M\times I\to M$ is a based homotopy from $f$ to $f'$, and $(V,\widehat\Phi)$ is a null-bordism over $M$ of $-W'\cup W\cup(M\times I)$ with the map $F'\cup F\cup h$. So $(W,F)\sim(W',F')$.
\end{proof}

Let $\theta\colon\Omega^B_5(P)\to\widetilde{\mathcal H}(M)$ be the composite of $[N,g]\mapsto(M\times I\sqcup N,\ c\,\mathrm{pr}_1\sqcup g)\in\widetilde{\mathcal H}(P)$ with the inverse of $c_\sharp$. Then $\theta\circ c_*=\iota$. Let $\kappa\colon\pi_1(\mathcal E_\bullet(P))\to\Omega^B_5(P)$ send a loop $\lambda$ of based self-equivalences of $P$ at $\mathrm{id}$, with adjoint $\widehat\lambda\colon P\times S^1\to P$, to $[M\times S^1,\ \widehat\lambda\circ(c\times\mathrm{id})]$. Here $M\times S^1$ carries the $B$-structure obtained from $\sigma\times I$ by identifying the two ends, that is the product of $\sigma$ with the Lie group spin structure of $S^1$. It is a $B$-manifold over $P$ because $c\widehat\lambda\simeq c\,\mathrm{pr}$ canonically by \cref{lem:canon}.

\begin{lemma}\label{lem:exactB}
The sequence of pointed sets
\[
\pi_1(\mathcal E_\bullet(P))\xrightarrow{\ \kappa\ }\Omega^B_5(P)\xrightarrow{\ \theta\ }\widetilde{\mathcal H}(M)\to\Aut_\bullet(P)\xrightarrow{\ \beta\ }\Omega^B_4(P)
\]
is exact, where $\widetilde{\mathcal H}(M)\to\Aut_\bullet(P)$ sends $(W,F)$ to $\varphi$ with $F|_{\partial_2W}\simeq\varphi c$. Moreover, for $x,x'\in\Omega^B_5(P)$ we have $\theta(x)=\theta(x')$ if and only if $x-x'\in\operatorname{im}\kappa$.
\end{lemma}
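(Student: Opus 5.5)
We check exactness at each of the three interior spots and then the refined statement about fibres of $\theta$, transporting everything through the bijection $c_\sharp$ of \cref{lem:HeMP}. In this way all constructions can be carried out in $\widetilde{\mathcal H}(P)$, where the end maps are $3$-connected maps $M\to P$ and no lifting is needed.

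\emph{Exactness at $\Aut_\bullet(P)$.} If $(W,F)\in\widetilde{\mathcal H}(M)$ has end map $f$ and $\varphi=\varphi_f$, then \eqref{eq:betapi} gives $\beta(\varphi)=c_*\alpha(f)$. Since $(W,cF)$ is a $B$-bordism over $P$ from $(M,c,\sigma)$ to $(M,cf,\sigma)$, this vanishes. Conversely, suppose $\beta(\varphi)=0$. Then there is a $B$-bordism $(W,F)$ over $P$ from $(M,c,\sigma)$ to $(M,\varphi c,\sigma)$, and this is an element of $\widetilde{\mathcal H}(P)$ with end map $\varphi c$. Applying $c_\sharp^{-1}$, which was constructed in \cref{lem:HeMP} by surgery and \cref{lem:lifting}, produces $(W,\widehat F)\in\widetilde{\mathcal H}(M)$ with end map $\widehat f$ satisfying $c\widehat f\simeq\varphi c$. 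Hence $(W,\widehat F)\mapsto\varphi$.

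\emph{Exactness at $\widetilde{\mathcal H}(M)$.} The composite $\Omega^B_5(P)\to\widetilde{\mathcal H}(M)\to\Aut_\bullet(P)$ is trivial, because $\theta(x)$ has end map $c$, up to the homotopy. Conversely, let $(W,F)$ have end map $f$ with $\varphi_f=\mathrm{id}$. Then $cf\simeq c$ by a based homotopy $h$. Glue $M\times I$ to $W$ along this homotopy composed into $P$. This gives a closed $B$-manifold $Z$ over $P$, and the equivalence relation defining $\widetilde{\mathcal H}(P)$ shows that $c_\sharp(W,F)$ equals the image of $[Z]$. Hence $(W,F)=\theta[Z]$. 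The same argument gives the mirror of the last step of the proof of \cref{prop:Hgroup}.

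\emph{Exactness at $\Omega^B_5(P)$ and the fibres of $\theta$.} First, $\theta\kappa(\lambda)$ is trivial. Indeed, $M\times S^1$ with the map $\widehat\lambda(c\times\mathrm{id})$ is exactly the closed manifold obtained by gluing the unit cylinder to itself along the self-homotopy $\lambda c$ of $c$. That loop of homotopies is allowed in the equivalence relation of \cref{def:H}, transported to $P$, so $\theta\kappa(\lambda)$ is identified with the unit.

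For the converse and for the statement about $\theta(x)=\theta(x')$, work in $\widetilde{\mathcal H}(P)$. The equality $\theta(x)=\theta(x')$ means there is a based homotopy $h\colon c\simeq c$ such that
\[
-(M\times I\sqcup N')\cup(M\times I\sqcup N)\cup_h(M\times I)
\]
is null-bordant over $P$. This closed manifold is $N\sqcup -N'\sqcup(M\times S^1,h)$. Since $c^*\colon[P,P]_\bullet\to[M,P]_\bullet$ is a bijection, and in fact $\mathrm{Map}_\bullet(P,P)\to\mathrm{Map}_\bullet(M,P)$ is a weak equivalence onto the relevant components because $c$ is $3$-connected and $P$ is a $2$-type, the loop $h$ in $\mathrm{Map}_\bullet(M,P)$ at $c$ comes from a loop $\lambda$ in $\mathcal E_\bullet(P)$. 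Thus $(M\times S^1,h)=\kappa(\lambda)$, and $x-x'=\pm\kappa(\lambda)$. The converse inclusion follows by reading this argument backwards. Taking $x'=0$ gives exactness at $\Omega^B_5(P)$.

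The main obstacle is this identification of self-homotopies of $c$ with $\pi_1(\mathcal E_\bullet(P))$. It needs the \emph{space-level} statement that restriction along $c$ induces an isomorphism on $\pi_1$ of mapping spaces, not merely a bijection on $\pi_0$. I would prove it by obstruction theory for the pair $(P\times S^1,M\times S^1)$. The relative cells have dimension at least $4$, and $P$ has no homotopy above degree $2$, so $\pi_1$ agrees. One must also check that the $B$-structure on $M\times S^1$ induced by gluing along $h$ is the product structure used in $\kappa$. This follows from \cref{lem:canon}, because the $B\pi$-component of $h$ is canonically trivial.
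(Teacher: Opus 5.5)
Your route is the paper's. You pass to $\widetilde{\mathcal H}(P)$ through $c_\sharp$, get exactness at $\Aut_\bullet(P)$ and at $\widetilde{\mathcal H}(M)$ by closing up with a homotopy, and describe the fibres of $\theta$ by writing a self-homotopy $h$ of $c$ as $c^*\lambda$ via the weak equivalence $c^*\colon\mathrm{Map}_\bullet(P,P)\to\mathrm{Map}_\bullet(M,P)$. Your obstruction-theoretic justification of that equivalence, which the paper only asserts, is fine.

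The step that fails as you justify it is the identification of the $B$-structure on $M\times S^1$ with the one in $\kappa$. \Cref{lem:canon} only makes the $B\pi$-part canonical; the spin structure in the circle direction is fixed by the gluing pattern. The closed manifold in the relation of \cref{def:H} is $-(M\times I)\cup(M\times I)\cup_h(M\times I)$. This is the \emph{double} of the cylinder with the map altered on a collar. It must be a double: for constant $h$ it has to be null-bordant (it is $\partial(M\times I\times I)$), or the relation would not be reflexive. So its circle factor carries the bounding spin structure. The Lie group structure used to define $\kappa$ is what you get by \emph{tracing} the cylinder, i.e.\ by ``gluing the unit cylinder to itself'', as in your argument that $\theta\kappa(\lambda)$ is trivial. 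In $\Omega^{\mathrm{spin}}_1(S^1)$ one has $[S^1_{\mathrm{Lie}}\to S^1]-[S^1_{\mathrm{bd}}\to S^1]=\eta\cdot[\mathrm{pt}\to S^1]$, so the product formula gives, for every loop $h$ at $c$,
\[
[M\times S^1_{\mathrm{Lie}},h]-[M\times S^1_{\mathrm{bd}},h]=[M,c,\sigma]\cdot\eta\in\Omega^B_5(P).
\]
Hence your identity $x-x'=\pm\kappa(\lambda)$, and your proof that $\theta\kappa(\lambda)$ is the unit, are only correct modulo this class. A sanity check: with the Lie structure, reflexivity $\theta(x)=\theta(x)$ forces $0\in\operatorname{im}\kappa$, while the constant loop gives $\kappa=[M,c,\sigma]\cdot\eta$.

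The repair is to define $\kappa(\lambda)$ with the bounding spin structure on the circle, equivalently as the double of $M\times I$ with the map $\lambda c$ on a collar. With that definition your argument goes through as written. Moreover $\kappa$ becomes a homomorphism: use $M$ times a planar pair of pants, all of whose boundary circles are bounding. Reversing the loop gives $\kappa(\bar\lambda)=-\kappa(\lambda)$, which you also need to absorb the sign in $x-x'=\pm\kappa(\lambda)$.

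The paper's proof makes the same identification without comment, so this is an imprecision you share with it rather than a departure from it. It is harmless in the paper's examples:
\begin{itemize}
\item for $\RP^4$, because $\Omega^B_5(P)=0$;
\item for $\RP^2\times S^2$, because $[M,c,\sigma]\cdot\eta$ is $[\RP^2\to P_2(\RP^2)]$ times $[S^2\to\CP^\infty]\cdot\eta\in\Omega^{\mathrm{spin}}_3(\CP^\infty)=0$.
\end{itemize}
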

\begin{proof}
By \cref{lem:HeMP} we may work with $\widetilde{\mathcal H}(P)$. The map to $\Aut_\bullet(P)$ is well defined because a homotopy $f\simeq f'$ gives $\varphi=\varphi'$.

\emph{Exactness at $\Aut_\bullet(P)$.} If $(W,F)\mapsto\varphi$, then $W$ is a $B$-bordism over $P$ from $(M,c,\sigma)$ to $(M,\varphi c,\sigma)$, so $\beta(\varphi)=0$. Conversely, a bordism over $P$ from $(M,c,\sigma)$ to $(M,\varphi c,\sigma)$ is an element of $\widetilde{\mathcal H}(P)$ mapping to $\varphi$.

\emph{Exactness at $\widetilde{\mathcal H}(P)$.} The composite $\Omega^B_5(P)\to\widetilde{\mathcal H}(P)\to\Aut_\bullet(P)$ is trivial. If $(W,F)\mapsto\mathrm{id}_P$, then $f=F|_{\partial_2W}\simeq c$ by a based homotopy $h$, and $Z=W\cup(M\times I)$ with the map $F\cup h$ is a closed $B$-manifold over $P$. As in the proof of \cref{prop:Hgroup}, $(W,F)$ is equivalent to the image of $[Z,F\cup h]$.

\emph{Exactness at $\Omega^B_5(P)$ and the last assertion.} Let $x=[N,g]$ and $x'=[N',g']$. By the definition of equivalence in $\widetilde{\mathcal H}(P)$, $\theta(x)=\theta(x')$ if and only if there is a based homotopy $h$ from $c$ to $c$, that is a based loop $h$ in $\mathrm{Map}_\bullet(M,P)$ at $c$, such that $N\sqcup(-N')\sqcup(M\times S^1)$ with the maps $g$, $g'$ and $h$ is null-bordant over $P$, that is $x-x'=-[M\times S^1,h]$. Since $P$ is a $2$-type and $c$ is $3$-connected, $c^*\colon\mathrm{Map}_\bullet(P,P)\to\mathrm{Map}_\bullet(M,P)$ is a weak equivalence, so every loop $h$ at $c$ is $c^*\lambda$ for a loop $\lambda$ of based self-equivalences of $P$ at $\mathrm{id}$, and $-[M\times S^1,h]=[M\times S^1,\bar h]$ for the reversed loop. Hence $\theta(x)=\theta(x')$ if and only if $x-x'\in\operatorname{im}\kappa$. Taking $x'=0$ gives exactness at $\Omega^B_5(P)$.
\end{proof}

\subsubsection{The map \texorpdfstring{$\gamma$}{gamma}}
Let $\xi\in\widehat\Omega^B_5(P,M)$. By \cref{lem:normalform} choose a representative $(W,F)$ in standard form, with $\partial W=\partial_1W\sqcup N$, $\partial_1W=-M$, $F|_{\partial_1W}=c$ and $g=F|_N$ of twisted degree $1$. Apply \cref{lem:surgery} with $d=5$, $A=N$ and $C=\partial_1W$. This replaces $(W,F)$ by a bordant manifold rel boundary, hence represents the same class $\xi$, and we may assume $F_*$ is an isomorphism on $\pi_j$ for $j\le1$. By \cref{lem:lifting} there is a lift $r\colon W\to M$ of $F$ extending $g$ on $N$. Put
\[
\gamma(\xi)=[\,r|_{\partial_1W}\,]\in[M,M]_\bullet .
\]

\begin{lemma}\label{lem:gammawd}
The class $\gamma(\xi)$ is a well defined element of $\Aut^+_\bullet(M)$, independent of the representative, the surgeries and the lift, and $\pi(\gamma(\xi))=1$.
\end{lemma}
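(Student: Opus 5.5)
The plan has three parts: first show that $r|_{\partial_1W}$ is a self-equivalence in $\Aut^+_\bullet(M)$, then check that $\pi$ kills it, and finally show independence of all choices.

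\emph{$\gamma(\xi)$ lies in $\Aut^+_\bullet(M)$ and $\pi(\gamma(\xi))=1$.} Put $f=r|_{\partial_1W}$. Since $r$ is a lift of $F$ and the homotopy $cr\simeq F$ is stationary on $A=N$, its restriction to $\partial_1W$ gives a based homotopy $cf\simeq F|_{\partial_1W}=c$. By uniqueness of $\varphi_f$, this gives $\pi(f)=1$ once $f$ is known to be an equivalence. The equality $cf\simeq c$ also shows that $f$ is over $B\pi$ and induces the identity on $\pi_1$ and $\pi_2$. For the twisted degree, note that $(W,r)$ is a $B$-bordism over $M$ between $(M,f,\sigma)$ and $(N,g)$, after transporting structures via \cref{lem:canon}. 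Hence $\deg_w(f)=\deg_w(g)=1$, and \cref{lem:PDup} shows that $f$ is a homotopy equivalence with $s(f)=1$ by \cref{lem:degw}.

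\emph{Independence of the lift and the surgeries.} This is the same problem as independence of the representative, so I would treat them uniformly. Suppose $(W,F,r)$ and $(W',F',r')$ are two standard-form representatives, each after surgery and each with a chosen lift. \Cref{lem:normalform} supplies a $6$-dimensional $B$-manifold $T$ over $P$ with $\partial T=W\cup W'\cup(M\times I)\cup Y_N$. Two representatives that differ only by surgery traces, or that coincide, fit into this picture with $T$ a trace or a product. On the portion $A:=W\cup W'\cup Y_N$ we already have a map to $M$, namely $r\cup r'\cup(\text{the map }Y_N\to M)$. It agrees on $N$ and $N'$ with $g$ and $g'$, which is exactly the data $Y_N$ restricts to. Set $C=M\times I$; on $C$ the map to $P$ is $c\circ\mathrm{pr}_1$, an isomorphism on $\pi_{\le2}$. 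After making $T$ connected, I apply \cref{lem:surgery} with $d=6$ rel boundary to $(T,A,C)$. Then \cref{lem:lifting} with $\Phi_A=r\cup r'\cup(Y_N\to M)$ gives a lift $\widehat\Phi\colon T\to M$. Before invoking it, one has to check the hypothesis $c\Phi_A\simeq\Phi|_A$ rel the relevant pieces; the homotopies $cr\simeq F$ and $cr'\simeq F'$ provide this after a collar adjustment. The restriction $\widehat\Phi|_{M\times I}$ is then a based homotopy from $r|_{\partial_1W}$ to $r'|_{\partial_1W'}$, using the base arc convention. Hence the two values of $\gamma$ agree.

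\emph{Main obstacle.} I expect the hardest step to be the bookkeeping in the last paragraph. One must arrange that the homotopy $c\Phi_A\simeq\Phi|_A$ is compatible along the corners where $W$, $W'$, $Y_N$ and $M\times I$ meet. It must also be stationary on $N\cup N'$ and fix basepoints, so that the output really is a based homotopy on $M\times I$. I would handle this by first deforming $\Phi$ near $\partial T$, using collars and the homotopy extension property. The deformation replaces $F$ and $F'$ by $cr$ and $cr'$ and leaves $C$ untouched. With that done, \cref{lem:lifting} applies verbatim.
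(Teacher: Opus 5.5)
Your proposal is correct and follows the paper's proof essentially step for step. You obtain $f=r|_{\partial_1W}\in\Aut^+_\bullet(M)$ and $\pi(f)=1$ from $cf\simeq c$, the bordism $(W,r)$ and \cref{lem:PDup}. You then get independence of all choices from the $6$-manifold $T$ of \cref{lem:normalform} with $A=W\cup W'\cup Y_N$ and $C=M\times I$, followed by \cref{lem:surgery} with $d=6$ and \cref{lem:lifting}, taking $T=W\times I$ for two lifts on the same $W$.

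The extra care you take only makes explicit what the paper leaves implicit when it invokes \cref{lem:lifting}. This consists of making $T$ connected, and gluing the homotopies $cr\simeq F$ and $cr'\simeq F'$ (stationary on $N$ and $N'$) so that the hypothesis $c\Phi_A\simeq\Phi|_A$ holds.
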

\begin{proof}
Put $f=r|_{\partial_1W}$. Since $cf\simeq F|_{\partial_1W}=c$ over $B\pi$, the map $f$ induces the identity on $\pi$ and isomorphisms on $\pi_i$ for $i\le2$. The manifold $(W,r)$ is a $B$-bordism over $M$ between $(N,g)$ and $(M,f)$, so $f$ has the same twisted degree as $g$, namely $1$. By \cref{lem:PDup}, $f$ is a homotopy equivalence, so $f\in\Aut^+_\bullet(M)$, and $\pi(f)=1$ because $cf\simeq c$.

For independence, let $(W,F,r)$ and $(W',F',r')$ be two such choices for the same class $\xi$. \Cref{lem:normalform} gives a $B$-manifold $T$ over $P$ with $\partial T=W\cup W'\cup(M\times I)\cup Y_N$, on which $W$, $W'$ and $Y_N$ carry the lifts $r$, $r'$ and the map $Y_N\to M$; these agree on $N$ and $N'$. Put $A=W\cup W'\cup Y_N$ and $C=M\times I$. By \cref{lem:surgery} (here $\dim T=6$ and $\Phi|_C=c\,\mathrm{pr}_1$) and \cref{lem:lifting} there is a lift $\widehat\Phi\colon T\to M$ extending the given map on $A$. Its restriction to $C=M\times I$ is a based homotopy from $r|_{\partial_1W}$ to $r'|_{\partial_1W'}$. The same argument with $W=W'$, $T=W\times I$ and different lifts $r,r'$ shows independence of the lift.
\end{proof}

\begin{lemma}\label{lem:alphagamma}
$\alpha\circ\gamma=\partial\colon\widehat\Omega^B_5(P,M)\to\widehat\Omega^B_4(M)$.
\end{lemma}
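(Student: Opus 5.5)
The plan is to compute both sides on a standard-form representative and compare them using the bordism that the lift provides. Let $\xi\in\widehat\Omega^B_5(P,M)$. By \cref{lem:normalform} and the construction preceding \cref{lem:gammawd}, I choose $(W,F)$ in standard form with $\partial W=-M\sqcup N$, $F|_{\partial_1W}=c$, $g=F|_N$ of twisted degree $1$. After the surgeries of \cref{lem:surgery} there is a lift $r\colon W\to M$ with $r|_N=g$ and $f:=r|_{\partial_1W}$, so that $\gamma(\xi)=[f]$.

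First I compute $\partial\xi$. By definition,
\[
\partial\xi=[\partial W,F|_{\partial W}]=[N,g]-[M,c|_M,\sigma]=[N,g]-[M,\mathrm{id},\sigma]
\]
in $\Omega^B_4(M)$. Here I use the convention that $P$ is the mapping cylinder, so $c|_M$ is the inclusion and, regarded as a map into $M$, the identity. The structure on $\partial_1W$ is $\sigma$, since standard form prescribes $\partial_1W=-M$ with the structure $\sigma$ (this is how $M\times I$ entered in \cref{lem:normalform}).

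Next I compute $\alpha(\gamma(\xi))=f_*[M,\mathrm{id},\sigma]-[M,\mathrm{id},\sigma]$. By the transport construction of \cref{lem:transport}, $f_*[M,\mathrm{id},\sigma]=[M,f,\sigma]$ with the structure transported via the canonical identification $c^*E\cong f^*c^*E$ of \cref{lem:canon}. The lifted manifold $(W,r)$ is a $B$-manifold over $M$ whose boundary is $-(M,f)\sqcup(N,g)$. So $[M,f,\sigma]=[N,g]$ in $\Omega^B_4(M)$, and hence $\alpha(\gamma(\xi))=[N,g]-[M,\mathrm{id},\sigma]=\partial\xi$.

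The main point needing care is that the $B$-structure on $W$ over $P$ really induces a $B$-structure over $M$ via $r$. It must also restrict on $\partial_1W$ to exactly the transported structure $f_*\sigma$, rather than to $\sigma$ twisted by an element of the $H^1(M;\Z/2)$-torsor. The structure on $W$ is a spin structure on $\nu_W\ominus F^*c_P^*E$. The homotopy $cr\simeq F$ from \cref{lem:lifting}, which is stationary on $N$, gives an identification $F^*c_P^*E\cong r^*c_M^*E$. By \cref{lem:canon} this identification is the canonical one, since every relevant homotopy of maps to $B\pi$ is unique up to contractible choice. On $\partial_1W$, the restriction $r|=f$ with $cf\simeq c$ therefore carries $\sigma$ to the structure defining $f_*[M,\mathrm{id},\sigma]$. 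On $N$ the homotopy is stationary, so the structure is unchanged.

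The remaining check is signs and orientation conventions. Both sides subtract the class of $(M,\mathrm{id},\sigma)$ coming from the incoming end $\partial_1W=-M$, so they agree exactly and not merely up to sign.
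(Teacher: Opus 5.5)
Your proposal is correct and follows the paper's proof. On a standard-form representative you compute $\partial\xi=[N,g]-[M,\mathrm{id},\sigma]$, use the lift $r$ as a $B$-bordism over $M$ between $(M,f)$ and $(N,g)$, and conclude $\alpha(\gamma(\xi))=[M,f,\sigma]-[M,\mathrm{id},\sigma]=\partial\xi$. Your extra check, via \cref{lem:canon}, that the structure induced on $\partial_1W$ through $r$ is exactly the transported structure defining $f_*[M,\mathrm{id},\sigma]$ makes explicit what the paper covers by its standing convention that all identifications of the pulled-back bundles are the canonical ones.
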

\begin{proof}
With $(W,F,r)$ as in the definition of $\gamma$ we have $\partial\xi=[\partial W,F|_{\partial W}]=[N,g]-[M,\mathrm{id}]$. The lift $r$ is a bordism over $M$ between $(N,g)$ and $(M,f)$ with $f=\gamma(\xi)$, so $[N,g]=[M,f]$ in $\Omega^B_4(M)$, and $\alpha(\gamma(\xi))=[M,f]-[M,\mathrm{id}]=\partial\xi$.
\end{proof}

Let $j\colon\Omega^B_5(P)\to\widehat\Omega^B_5(P,M)$ be the map of \cref{lem:bexact}, and let $\rho\colon\Omega^B_5(P)\to\Aut^+_\bullet(M)$ be the composite of $\theta$ with the end map.

\begin{lemma}\label{lem:invcom}
$\gamma(j(x))=\rho(x)^{-1}$ for all $x\in\Omega^B_5(P)$.
\end{lemma}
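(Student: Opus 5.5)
Take $x=[N,g]\in\Omega^B_5(P)$ and compute both sides from one explicit manifold. First, $\rho(x)$: by definition $\theta(x)=c_\sharp^{-1}(M\times I\sqcup N,\,c\,\mathrm{pr}_1\sqcup g)$. To make this explicit, connect $N$ to the cylinder by an interior connected sum, which does not change the class in $\widetilde{\mathcal H}(P)$. This gives a connected $W=(M\times I)\# N$ with a map $F\colon W\to P$, $F|_{\partial_1W}=c=F|_{\partial_2W}$. As in the surjectivity part of \cref{lem:HeMP}, do surgery rel boundary (\cref{lem:surgery}, $d=5$, $A=\partial_1W$, $C=\partial_2W$) and lift by \cref{lem:lifting} to $\widehat F\colon W\to M$ with $\widehat F|_{\partial_1W}=\mathrm{id}_M$. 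Then $\rho(x)=h:=\widehat F|_{\partial_2W}$, and $c h\simeq c$ through the lift homotopy.

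Next compute $\gamma(j(x))$ from the same $W$, now read with its ends reversed. The class $j(x)$ is $[N,g]$ regarded in $\Omega^B_5(P,M)$ with empty boundary. By the normal-form construction in \cref{lem:normalform}, it is represented by $N\sqcup(M\times I)$, hence by the connected $W$. The rôles of the ends are now: the free end $\partial_1'W:=\partial_2W$ carries $c$, and the boundary piece mapped to $M$ is the other end, carrying $\mathrm{id}_M$. The $B$-structure is unchanged, and the orientation reverses consistently because $\partial(-W)$ swaps the ends. The map $F$ is still $c$ on both ends, so this is standard form with $g'=\mathrm{id}_M$ of twisted degree $1$. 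The definition of $\gamma$ asks for a lift $r\colon W\to M$ of $F$ extending $\mathrm{id}_M$ on the $N$-side end (here $\partial_1W$). The lift $\widehat F$ already does this, up to the homotopy $c\widehat F\simeq F$, which we absorb as allowed in \cref{lem:lifting}. So $\gamma(j(x))=[\widehat F|_{\partial_2W}]=h$.

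This apparently gives $\gamma(j(x))=\rho(x)$, not $\rho(x)^{-1}$. The discrepancy has to come from how the end map enters each definition, and tracking it is the main obstacle. In $\gamma$ the lift is prescribed on $N$ and read on $\partial_1W$, with the latter entering $\partial W$ as $-M$. In $\widetilde{\mathcal H}$, by contrast, the identity sits on $\partial_1W=-M$ and the end map on $\partial_2W=+M$. Reading $W$ backwards is therefore a bordism from $(M,h)$ to $(M,\mathrm{id})$. To compare with $\gamma$'s convention, post-compose the lift by $h^{-1}$ using the transport of \cref{lem:transport}, as in the inverse construction of \cref{prop:Hgroup}. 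This turns the pair (identity on the $N$-side, $h$ on the free end) into (identity on the free end, $h^{-1}$ on the $N$-side). Then apply the independence statement of \cref{lem:gammawd} to identify the result.

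Concretely, I would carry out this bookkeeping as follows. Realize $j(x)$ by the standard form $W_1=N\sqcup M\times I$ with $\partial_1W_1=-M\times\{0\}$ carrying $c$, and lift prescribed as $\mathrm{id}$ on $M\times\{1\}$. Then compare with the $\widetilde{\mathcal H}$-representative in which $\mathrm{id}$ sits at $M\times\{0\}$, using $\overline W$ with map $h^{-1}\widehat F$. Its end maps are $h^{-1}$ at the free end and $\mathrm{id}$ at the prescribed end, giving $\gamma(j(x))=h^{-1}=\rho(x)^{-1}$. Well-definedness from \cref{lem:gammawd} and the uniqueness of bundle identifications from \cref{lem:canon} guarantee that the choices of connected sum, surgery and lift do not matter.
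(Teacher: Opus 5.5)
Your last paragraph is, apart from one slip, the paper's argument. It uses the same $W=(M\times I)\#N$ after surgery, read in standard form with free end $\partial_1W=M\times\{0\}$ and prescribed end $M\times\{1\}$ with $g'=\mathrm{id}_M$. It uses the lift $r_0=h^{-1}\circ\widehat F$, which restricts to $h^{-1}$ on $M\times\{0\}$. The slip is $\overline W$: it represents $-j(x)$, and its free end (the one entering $\partial\overline W$ as $-M$) is $M\times\{1\}$, where $h^{-1}\widehat F\simeq\mathrm{id}_M$. The end maps you state are those of $W$ with its own $B$-structure, which is what you must use.

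Your second paragraph is not an ``apparent'' discrepancy but a misreading of standard form that conflates $W$ with $-W$. With the $B$-structure of $W$, the end $\partial_2W$ is $+M$, so it cannot be the free end, and $\mathrm{id}_M$ on $\partial_1W=-M$ has twisted degree $-1$. Reversing the structure does make $-W$ a standard-form representative, with free end $M\times\{1\}$, but of the class $-j(x)=j(-x)$. So that paragraph actually proves $\gamma(j(-x))=\rho(x)$, which is true. Combined with $\rho(-x)=\rho(x)^{-1}$, it would give an alternative proof. That identity requires $\rho$ to be a homomorphism, i.e.\ $c_\sharp$ to respect products. This route also avoids the issue below, because $\widehat F$ comes from \cref{lem:lifting} with a homotopy $c\widehat F\simeq F$ stationary on $M\times\{0\}$.

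What your final argument omits is the check that $r_0$ is a lift in the sense of \cref{lem:lifting}, which is what \cref{lem:gammawd} requires. That means $r_0=\mathrm{id}_M$ on $M\times\{1\}$, with a homotopy $cr_0\simeq F$ stationary there.

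First, $r_0$ lifts $F$ because $ch^{-1}\simeq c$, which follows from $ch\simeq c$, i.e.\ $h\in\ker\pi$. It is not a consequence of \cref{lem:transport}.

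Second, $r_0|_{M\times\{1\}}=h^{-1}h$ is only homotopic to $\mathrm{id}_M$. After correcting this by homotopy extension, the homotopy $cr_0\simeq F$ restricts on $M\times\{1\}$ to a possibly non-trivial loop in $\mathrm{Map}(M,P)$ at $c$. The paper removes it by changing the homotopy $ch^{-1}\simeq c$ by a suitable loop, using that precomposition with a homotopy equivalence is a bijection on $\pi_1$ of the mapping space.

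Without this step the independence of the lift in \cref{lem:gammawd} does not apply to $r_0$. Its proof glues the homotopies on $W\times\{0\}$ and $W\times\{1\}$ to the constant one on $N'\times I$, which needs them to be stationary on $N'$.
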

\begin{proof}
Let $x=[N,g]$ and let $W_0$ be the interior connected sum of $M\times I$ and $N$ with the map $c\,\mathrm{pr}_1\sharp g$. After surgery in the interior as in \cref{lem:surgery}, with $C=M\times\{0\}$, we obtain $(W,F)$ with $F_*$ an isomorphism on $\pi_j$ for $j\le1$. It represents $\theta(x)$ in $\widetilde{\mathcal H}(P)$, and in standard form, with $\partial_1W=M\times\{0\}$, $N'=M\times\{1\}$ and $g'=\mathrm{id}_M$, it represents $j(x)$.

By the construction of the bijection of \cref{lem:HeMP}, $\rho(x)=[\widehat f]$, where $\widehat F\colon W\to M$ is a lift of $F$ extending $\mathrm{id}_M$ on $M\times\{0\}$ (it exists by \cref{lem:lifting}) and $\widehat f=\widehat F|_{M\times1}$. Since $F|_{M\times1}=c$, we have $c\widehat f\simeq c$, so $\widehat f$ is a homotopy equivalence in $\ker\pi$, and $c\widehat f^{-1}\simeq c\widehat f\widehat f^{-1}\simeq c$. Put $r_0=\widehat f^{-1}\circ\widehat F$. Then $cr_0\simeq c\widehat F\simeq F$, and $r_0|_{M\times1}=\widehat f^{-1}\widehat f\simeq\mathrm{id}_M$. After a homotopy we may assume that $r_0$ restricts to $\mathrm{id}_M$ on $M\times\{1\}$. The resulting homotopy $cr_0\simeq F$ restricts on $M\times\{1\}$ to a loop in $\mathrm{Map}(M,P)$ based at $c$. Since $\widehat f$ is a homotopy equivalence, precomposition with $\widehat f^{-1}$ is a bijection on $\pi_1$ of the mapping space, so changing the homotopy $c\widehat f^{-1}\simeq c$ by a suitable loop makes this loop trivial. Then $r_0$ is a lift of $F$ extending $\mathrm{id}_M$ on $M\times\{1\}$ in the sense of the definition of $\gamma$, and by \cref{lem:gammawd} the class $\gamma(j(x))$ may be computed with it. Hence $\gamma(j(x))=[r_0|_{M\times0}]=[\widehat f^{-1}]=\rho(x)^{-1}$.
\end{proof}

\begin{corollary}\label{cor:im}
The sequence of pointed sets
\[
\pi_1(\mathcal E_\bullet(P))\xrightarrow{\ j\circ\kappa\ }\widehat\Omega^B_5(P,M)\xrightarrow{\ \gamma\ }\Aut^+_\bullet(M)\xrightarrow{\ \pi\ }\Aut_\bullet(P)
\]
is exact. In particular $\ker\pi=\operatorname{im}\gamma$.
\end{corollary}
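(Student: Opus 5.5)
We check exactness at the three positions: at $\Aut^+_\bullet(M)$, at $\widehat\Omega^B_5(P,M)$, and that the first composite is trivial. Every ingredient is already available. The inclusion $\operatorname{im}\gamma\subseteq\ker\pi$ is part of \cref{lem:gammawd}.

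\emph{Exactness at $\Aut^+_\bullet(M)$.} For the reverse inclusion $\ker\pi\subseteq\operatorname{im}\gamma$, let $f\in\ker\pi$, so $cf\simeq c$ and $\pi(f)=\mathrm{id}_P=\pi(f^{-1})$. By exactness of the second sequence at $\Aut_\bullet(P)$ (\cref{lem:exactB}), $f^{-1}$ is the end map of some element of $\widetilde{\mathcal H}(M)$ whose image in $\Aut_\bullet(P)$ is trivial. Exactness of that same sequence at $\widetilde{\mathcal H}(M)$ then gives $x\in\Omega^B_5(P)$ with $\theta(x)$ equal to this element. Hence $\rho(x)=f^{-1}$. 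By \cref{lem:invcom}, $\gamma(j(x))=\rho(x)^{-1}=f$. Hence $f\in\operatorname{im}\gamma$.

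\emph{The composite $\gamma\circ j\circ\kappa$ is trivial.} By \cref{lem:invcom}, $\gamma(j\kappa(\lambda))=\rho(\kappa(\lambda))^{-1}$. By \cref{lem:exactB}, $\theta\kappa(\lambda)=\theta(0)$, which is the unit of $\widetilde{\mathcal H}(M)$. So $\rho(\kappa(\lambda))=\mathrm{id}$.

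\emph{Exactness at $\widehat\Omega^B_5(P,M)$.} This is the main step. Suppose $\gamma(\xi)=\mathrm{id}$. Take $(W,F,r)$ as in the definition of $\gamma$, with $r|_{\partial_1W}\simeq\mathrm{id}_M$. Then \cref{lem:alphagamma} gives $\partial\xi=\alpha(\mathrm{id})=0$. By \cref{lem:bexact} we get $\xi=j(x)$ for some $x\in\Omega^B_5(P)$. Now \cref{lem:invcom} gives $\rho(x)=\gamma(\xi)^{-1}=\mathrm{id}$. By the first sequence (\cref{prop:Hgroup}), $\theta(x)=\iota(y)$ for some $y\in\Omega^B_5(M)$. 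Since $\iota=\theta\circ c_*$, we have $\theta(x)=\theta(c_*y)$, and the last assertion of \cref{lem:exactB} gives $x-c_*y\in\operatorname{im}\kappa$. Applying $j$ and using $j\circ c_*=0$ from \cref{lem:bexact}, we obtain $\xi=j(x)=j\kappa(\lambda)$ for some $\lambda$.

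Two points need care. First, $\theta$ is only a map of pointed sets, so we must use the precise coset statement of \cref{lem:exactB}, not mere exactness at $\Omega^B_5(P)$; with that statement the argument goes through. Second, "exactness" for $\gamma$ should be read in the pointed-set sense. If one wants more, namely that the fibres of $\gamma$ are cosets of $\operatorname{im}(j\kappa)$, this would additionally require $\gamma$ to be compatible with addition. We do not need this for the corollary as stated.
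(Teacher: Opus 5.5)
\textbf{Gap in the step $\ker\pi\subseteq\operatorname{im}\gamma$.} Your argument for the triviality of $\gamma\circ j\circ\kappa$ and for exactness at $\widehat\Omega^B_5(P,M)$ is the paper's argument and is fine. The other inclusion fails. You claim that exactness of the second sequence at $\Aut_\bullet(P)$ makes $f^{-1}$ the end map of some element of $\widetilde{\mathcal H}(M)$. That exactness statement only says which elements of $\Aut_\bullet(P)$ are hit by $\widetilde{\mathcal H}(M)\to\Aut_\bullet(P)$, and for $\varphi=\mathrm{id}_P$ it says nothing. Whether an element of $\Aut^+_\bullet(M)$ is an end map is governed by the first sequence. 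By \cref{prop:Hgroup} the image of the end map is exactly $\ker\alpha$, so your argument only reaches $\ker\pi\cap\ker\alpha$.

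It cannot reach more. Any class $\gamma(j(x))$ satisfies $\alpha(\gamma(j(x)))=\partial j(x)=0$ by \cref{lem:alphagamma,lem:bexact}, so $\operatorname{im}(\gamma\circ j)\subseteq\ker\alpha$. For $M=\RP^2\times S^2$ the pinch-fold maps $k_{\kappa_1},k_{\kappa_2}$ lie in $\ker\pi$ but have $\alpha(k_{\kappa_i})\neq0$ (\cref{prop:detect}). Their preimages under $\gamma$ are therefore relative classes $\xi$ with $\partial\xi=\alpha(k_{\kappa_i})\ne0$, which are not in $\operatorname{im} j$. Routing through $\Omega^B_5(P)$ and \cref{lem:invcom} cannot work here.

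\textbf{The missing construction.} Build the relative class directly from the homotopy, as the paper does.
\begin{itemize}
\item For $f\in\ker\pi$, choose a based homotopy $h\colon M\times I\to P$ from $c$ to $cf$.
\item Give $M\times I$ the structure $\sigma\times I$. Then $(M\times I,h)$ is in standard form, with $\partial_1=M\times\{0\}$, $N=M\times\{1\}$ and $g=f$.
\item The map $g=f$ has twisted degree $1$ because $f\in\Aut^+_\bullet(M)$. Hence $(M\times I,h)$ defines a class $\xi\in\widehat\Omega^B_5(P,M)$, with $\partial\xi=\alpha(f)$.
\item No surgery is needed, since $h\simeq c\circ\mathrm{pr}_1$ is an isomorphism on $\pi_j$ for $j\le2$.
\item \cref{lem:lifting} with $A=N$ and $C=\partial_1$ gives a lift $r\colon M\times I\to M$ of $h$ with $r|_{M\times1}=f$.
\item Then $r$ is a homotopy in $M$ from $r|_{M\times0}$ to $f$, so $\gamma(\xi)=[r|_{M\times 0}]=[f]$.
\end{itemize}
With this replacement your proof is complete.
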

\begin{proof}
\emph{Exactness at $\Aut^+_\bullet(M)$.} By \cref{lem:gammawd}, $\operatorname{im}\gamma\subset\ker\pi$. Let $f\in\ker\pi$, so there is a based homotopy $h\colon M\times I\to P$ from $c$ to $cf$. Then $(M\times I,h)$ is in standard form with $\partial_1=M\times\{0\}$, $N=M\times\{1\}$ and $g=f$, which has twisted degree $1$, so it defines a class $\xi\in\widehat\Omega^B_5(P,M)$. By \cref{lem:lifting} with $A=N$ and $C=\partial_1$ (no surgery is needed, since $h$ is an isomorphism on $\pi_j$, $j\le2$) there is a lift $r\colon M\times I\to M$ of $h$ with $r|_{M\times1}=f$. Since $cr\simeq h$, the map $r$ is a homotopy in $M$ from $r|_{M\times0}$ to $f$, so $\gamma(\xi)=[f]$.

\emph{Exactness at $\widehat\Omega^B_5(P,M)$.} If $\xi=j(\kappa(\lambda))$, then $\theta(\kappa(\lambda))$ is the base point by \cref{lem:exactB}, so $\rho(\kappa(\lambda))=\mathrm{id}$ and $\gamma(\xi)=\mathrm{id}$ by \cref{lem:invcom}. Conversely let $\gamma(\xi)=\mathrm{id}$. By \cref{lem:alphagamma}, $\partial\xi=\alpha(\mathrm{id})=0$, so $\xi=j(x)$ for some $x\in\Omega^B_5(P)$ by \cref{lem:bexact}, and $\rho(x)=\mathrm{id}$ by \cref{lem:invcom}. By \cref{prop:Hgroup}, $\theta(x)=\iota(z)=\theta(c_*z)$ for some $z\in\Omega^B_5(M)$. By \cref{lem:exactB}, $x-c_*z=\kappa(\lambda)$ for some $\lambda$, hence $\xi=j(x)=j(\kappa(\lambda))+j(c_*z)=j(\kappa(\lambda))$, since $j\circ c_*=0$ by \cref{lem:bexact}.
\end{proof}

\begin{proof}[Proof of \cref{thm:braidNO}]
The four sequences are exact by \cref{prop:Hgroup}, \cref{lem:exactB}, \cref{lem:bexact} and \cref{cor:im}. For the commutativity of the diagram: $\theta\circ c_*=\iota$ by definition. The square formed by $\widetilde{\mathcal H}(M)\to\Aut^+_\bullet(M)\xrightarrow{\pi}\Aut_\bullet(P)$ and $\widetilde{\mathcal H}(M)\to\Aut_\bullet(P)$ commutes since $\varphi_fc\simeq cf$. The square formed by $\alpha$, $\beta$, $\pi$ and $\widehat\Omega^B_4(M)\to\Omega^B_4(P)$ commutes by \eqref{eq:betapi}. The relation $\alpha\circ\gamma=\partial$ is \cref{lem:alphagamma}, and the two maps $\Omega^B_5(P)\to\Aut^+_\bullet(M)$ through $\gamma\circ j$ and through $\widetilde{\mathcal H}(M)$ are inverse to each other by \cref{lem:invcom}. The remaining triangles commute by definition.
\end{proof}

\begin{remark}\label{rem:finite}
Finiteness of $\pi$ is used in three places: in \cref{lem:degw}, which identifies the twisted degree with the degree of the lift to the universal cover, in \cref{lem:PDup}, and in \cref{lem:surgery}, where it gives finite generation of the kernels. We do not discuss infinite fundamental groups.
\end{remark}

\begin{remark}\label{rem:sminus}
The braid concerns $\Aut^+_\bullet(M)$ only. If $s(f)=-1$ then $f_*[M,\id,\sigma]$ has twisted degree $-1$ and cannot be bordant to $[M,\id,\sigma]$ over $M$.
Spin structures on $\nu_M\ominus c^*E$ form a torsor over $H^1(M;\Z/2)$, and orientation reversal
$\hat\tau$ is an automorphism of $B\mathrm{Spin}$ over $BO$ with $[N,\hat\tau\sigma,g]=-[N,\sigma,g]$. An element with $s(f)=-1$ is realized by a diffeomorphism
only if $f_*[M,\id,\sigma]=[M,\id,\sigma']$ for some $\sigma'\in\{\hat\tau\sigma+x:x\in H^1(M;\Z/2)\}$ (for $M=\RP^2\times S^2$ there are two such $x$, and $x$ is the class by which $f^*$ moves
structures); we use this only as a necessary condition. The torsor does not enter the exactness statements of \cref{thm:braidNO},
but it enters their \emph{use}: if $f$ is a diffeomorphism with $f^*\sigma=\sigma+x$, then $f\colon(M,\sigma+x)\to(M,\sigma)$ is an isomorphism of $B$-manifolds over $M$ (with $f$ as the map), so
$f_*[M,\id,\sigma+x]=[M,\id,\sigma]$ and therefore
$\alpha(f)=f_*\bigl([M,\id,\sigma]-[M,\id,\sigma+x]\bigr)$. Hence such an $f$ lies in $\ker\alpha$ if and only if
$[M,\id,\sigma]=[M,\id,\sigma+x]$ in $\widehat\Omega^B_4(M)$.
\end{remark}

\begin{remark}\label{rem:Pinform}
The braid of \cref{thm:braidNO} is the braid for $\mathrm{Pin}^\pm$-bordism of $X\langle w_1\rangle$, $X=M,P$, where $X\langle w_1\rangle=X\times_{K(\pi,1)}B\mathrm{Pin}^\pm$ is a $B\mathrm{Spin}$-fibration over $X$ (not the orientation double cover). Indeed $\Omega^{\mathrm{Pin}^\mp}_n(X\langle w_1\rangle)=\Omega^B_n(X)$ with $E=L$ for tangential $\mathrm{Pin}^-$ (that is, $\Omega^{\mathrm{Pin}^-}_n=\pi_n(M\mathrm{Spin}\wedge\Sigma^{-1}MO(1))$) and $E=3L$ for tangential $\mathrm{Pin}^+$. In the notation of this paper $\RP^2\times S^2$ is tangential $\mathrm{Pin}^-$ and normal $\mathrm{Pin}^+$, so $E=L$.

\medskip\noindent\emph{The $d_2$ differential.} In the Atiyah--Hirzebruch spectral sequence $E^2_{p,q}=H_p(X;\Omspin_q\otimes\Zw)$ the differential $d_2\colon E_{p,1}\to E_{p-2,2}$ is dual to $D(x)=Sq^2x+w_1(E)\,Sq^1x+w_2(E)\,x$. It is the twisting bundle $E$ whose $w_2$ enters, not $w_2(M)$. For $E=L$ one has $w_2(E)=0$, and
$D(x)=Sq^2x+a\,Sq^1x$. In $M=\RP^2\times S^2$, with $H^2(M;\Z/2)=\langle a^2,b\rangle$, $a^3=0$, $b^2=0$, we get $D(a^2)=D(b)=0$. Hence both classes of $E^2_{2,2}(M)=H_2(M;\Z/2)=\langle[\RP^2],[S^2]\rangle$ survive: $E^\infty_{2,2}(M)=(\Z/2)^2$, in agreement with \cref{prop:AHSS}. (The term $w_2(E)\,x=a^2x$, which is what one obtains by using $w_2(M)=a^2$, belongs to the twist $E=3L$ and would kill $[S^2]$ through $D(b)=a^2b\ne0$.)

\medskip\noindent\emph{The same term in $P$.} Since $M$ is a product, $P=P_2(M)=P_2(\RP^2)\times\mathbb{C}P^\infty$. For the generator $c\in H^2(\mathbb{C}P^\infty;\Z/2)$ we have $D(c)=Sq^2c=c^2\ne0$, so $d_2\colon E_{4,1}(P)\to E_{2,2}(P)$ kills the class $[\mathbb{C}P^1]$, that is the image of $[S^2]$. The class $[\RP^2]$ is not hit by $d_2$, because $a^3=0$ already in $P_2(\RP^2)$ and so $D(a^2)=0$ there. So the discrepancy between $E_{2,2}(M)$ and $E_{2,2}(P)$ is the class $[S^2]\otimes\eta^2$. This class is, up to the sign conventions of \cref{prop:detect}, $\Theta(\kappa_2)=\alpha(k_{\kappa_2})$, and it maps to zero in $\Omega^B_4(P)$ because $c\circ\kappa_2\colon S^4\to P$ is null, as $\pi_4(P)=0$. In the braid this is the statement that $\alpha(\ker\pi)\subseteq\ker(\widehat\Omega^B_4(M)\to\Omega^B_4(P))=\partial\widehat\Omega^B_5(P,M)$ (third sequence of \cref{thm:braidNO}): the difference between $E^\infty_{2,2}(M)$ and $E^\infty_{2,2}(P)$ is what detects $\ker\bigl(\Autb(M)\to\Autb(P)\bigr)$.
\end{remark}

\subsection{Application to \texorpdfstring{$M=\RP^2\times S^2$}{M = RP2 x S2}}\label{sec:applyRP2}
Here $\pi=\Z/2$, $E=L$, $c\colon M\to B\Z/2$ is the projection to $\RP^2\subset\RP^\infty$ and
$w_2(E)=0$, $w_1(E)=a$ (\cref{ex:RP2S2B}).

\begin{proposition}\label{prop:AHSS}
In the twisted Atiyah--Hirzebruch spectral sequence $E^2_{p,q}=H_p(M;\Omspin_q\otimes\Zw)\Rightarrow\Omega^B_{p+q}(M)$,
all differentials leaving or entering total degree $4$ vanish, and $|\widehat\Omega^B_4(M)|=16$.
The $E^\infty$ terms in total degree $4$ are
\[
\begin{array}{c|ccccc}
(p,q)&(4,0)&(3,1)&(2,2)&(1,3)&(0,4)\\\hline
E^\infty&\Z&\Z/2&(\Z/2)^2&0&\Z/2
\end{array}
\]
\end{proposition}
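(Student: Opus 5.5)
The plan is to run the twisted Atiyah--Hirzebruch spectral sequence of \cref{rem:Pinform}, with $E=L$, $w_1(E)=a$, $w_2(E)=0$, and with coefficients $\Omspin_q=\Z,\Z/2,\Z/2,0,\Z$ for $q=0,\dots,4$. The twist by $\Zw$ matters only for $q=0,4$, where the coefficients are $\Z$; for $q=1,2$ they are $\Z/2$, and the twist is invisible. I tabulate $E^2$ in total degrees $3,4,5$ using \cref{prop:homotopy}(iv) and the K\"unneth formula $H_*(M;\Z/2)=H_*(\RP^2;\Z/2)\otimes H_*(S^2;\Z/2)$.

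\emph{Degree $4$:} $E_{4,0}=H_4(M;\Zw)=\Z$, $E_{3,1}=H_3(M;\Z/2)=\Z/2$, $E_{2,2}=H_2(M;\Z/2)=(\Z/2)^2$, $E_{1,3}=0$, $E_{0,4}=H_0(M;\Zw)=\Z/2$.

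\emph{Degree $5$:} $E_{5,0}=0$, $E_{4,1}=\Z/2$, $E_{3,2}=\Z/2$, $E_{2,3}=0$, $E_{1,4}=H_1(M;\Zw)=0$, $E_{0,5}=0$.

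\emph{Degree $3$:} $E_{3,0}=H_3(M;\Zw)=0$, $E_{2,1}=(\Z/2)^2$, $E_{1,2}=\Z/2$, $E_{0,3}=0$.

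The $d_2$ differentials are handled with the formula $D(x)=Sq^2x+a\,Sq^1x$ of \cref{rem:Pinform}. On $E_{p,1}\to E_{p-2,2}$, $d_2$ is dual to $D$. On $E_{p,0}\to E_{p-2,1}$, $d_2$ is reduction mod $2$ followed by the dual of $D$.

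\begin{itemize}
\item $d_2\colon E_{4,1}\to E_{2,2}$ and $d_2\colon E_{4,0}\to E_{2,1}$ are dual to $D$ on $H^2(M;\Z/2)$. They vanish because $D(a^2)=D(b)=0$, as computed in \cref{rem:Pinform}.
\item $d_2\colon E_{3,1}\to E_{1,2}$ is dual to $D$ on $H^1$. It vanishes since $D(a)=Sq^2a+a\,Sq^1a=0+a^3=0$.
\item $d_2\colon E_{3,0}\to E_{1,1}$ starts at $E_{3,0}=0$.
\item Every other differential touching total degree $4$ at the $E^2$ or $E^3$ page has source or target among $E_{5,*}$, $E_{1,3}$, $E_{0,3}$, $E_{1,4}$, $E_{0,5}$, all zero. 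This covers $d_3\colon E_{3,1}\to E_{0,3}$, $d_3\colon E_{4,1}\to E_{1,3}$, $d_4\colon E_{4,0}\to E_{0,3}$ and $d_2\colon E_{2,2}\to E_{0,3}$.
\end{itemize}

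The only potentially non-zero differentials left are $d_2\colon E_{2,3}\to E_{0,4}$, which is zero because $E_{2,3}=0$, and $d_3\colon E_{3,2}\to E_{0,4}$ (and $d_4\colon E_{4,1}\to E_{0,4}$, once we know $E_{4,1}$ survives to that page).

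The main obstacle is showing that nothing hits $E_{0,4}=\Z/2$. For this I would not compute $d_3$ directly but detect the class. The class in $E_{0,4}$ is represented by a K3 surface $K$ mapped to the base point, with the $B$-structure induced from its spin structure. The restriction of $c^*E=c^*L$ to $\wt M=S^2\times S^2$ is trivial, so transfer along the double cover $p$ gives a homomorphism $\mathrm{tr}\colon\Omega^B_4(M)\to\Omspin_4(\wt M)$. Composing with $\Omspin_4(\wt M)\to\Omspin_4=\Z$, and then with the signature divided by $16$, gives a homomorphism to $\Z$. It sends $[K]$ to $[K\sqcup K]$, which has value $\pm2\neq0$. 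Hence $[K]\neq0$ in $\Omega^B_4(M)$, so $E^\infty_{0,4}=\Z/2$ and all differentials into it vanish. In the same way, nothing leaving degree $4$ is non-zero, so $E^\infty=E^2$ in total degree $4$, giving the table in the statement.

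Finally, the twisted degree $\Omega^B_4(M)\to H_4(M;\Zw)$ is the edge homomorphism. It is injective on the quotient $E^\infty_{4,0}=\Z$ and kills the filtration-$3$ subgroup. Therefore $\widehat\Omega^B_4(M)$ is exactly this subgroup, which has a filtration with quotients $\Z/2,(\Z/2)^2,0,\Z/2$. Its order is $2\cdot4\cdot2=16$.
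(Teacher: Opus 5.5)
Your proof breaks at the detection of $E_{0,4}$. The class $[K3\to\mathrm{pt}]$ lies in filtration $0$, which is a quotient of $E^2_{0,4}=H_0(M;\Zw)=\Z/2$. So it is $2$-torsion in $\Omega^B_4(M)$: carrying the $K3$ once around the orientation-reversing loop reverses its $B$-structure. Hence no homomorphism $\Omega^B_4(M)\to\Z$ can detect it. Concretely, a trivialization of $p^*c^*L$ over $\wt M$ identifies the fibre $L_{x_0}$ with $\mathbb{R}$ with opposite signs at $\wt x_0$ and $T\wt x_0$, since the deck transformation acts by $-1$ on the trivialized line. The two copies of $K3$ in the pulled-back cover therefore carry opposite orientations, and $\mathrm{tr}[K3]=[K3]-[K3]=0$, already in $\Omspin_4(\wt M)$. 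In fact, for every closed $B$-manifold $N$ over $M$ the deck involution of $\wt N$ reverses the induced orientation. So the signature of $\wt N$ is $0$ and your homomorphism is identically zero. The signature argument in \cref{prop:detect}(ii) works only because the twist $2L$ there is orientable.

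The missing idea is naturality with respect to a space in which the possible sources vanish. The projection $r\colon M\to\RP^2$ and the inclusion $i\colon\RP^2\to M$ are maps over $B\Z/2$ with $r\circ i=\mathrm{id}$. In the spectral sequence of $\RP^2$, the only possible sources of differentials into $E_{0,4}$ are $E_{2,3}$, $E_{3,2}$ and $E_{4,1}$. These are zero because $\Omspin_3=0$ or $p>2$, so $[K3]\neq0$ in $\Omega^B_4(\RP^2)$. Then $r_*i_*[K3]=[K3]$ gives $i_*[K3]\neq0$ in $\Omega^B_4(M)$, so $d_3\colon E_{3,2}\to E_{0,4}$ and $d_4\colon E_{4,1}\to E_{0,4}$ vanish for $M$.

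There is also a smaller omission. Your case list does not cover $d_3\colon E_{4,0}\to E_{1,2}=H_1(M;\Z/2)=\Z/2$: it is not a $d_2$, and neither its source nor its target is zero. The paper disposes of every differential out of $E_{4,0}$ at once: $(M,\bar\nu,\mathrm{id})$ is a $B$-manifold of twisted degree $1$, so the generator of $E^2_{4,0}=\Z$ is a permanent cycle. This does not affect the count $|\widehat\Omega^B_4(M)|=16$, but it is needed for the claim that all differentials leaving total degree $4$ vanish.
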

\begin{proof}
The $E^2$-terms are computed from $H_*(M;\Z/2)$, of ranks $1,1,2,1,1$, and $H_0(M;\Zw)=\Z/2$,
$H_4(M;\Zw)=\Z$ (\cref{prop:homotopy}). The differentials $d^2$ are dual to
$D(x)=Sq^2x+Sq^1x\cdot a+x\cdot w_2(E)$ with $w_2(E)=0$ (the $d^2$ formula for twisted spin bordism, see \cite{T93}). In $H^*(M;\Z/2)=\Z/2[a,b]/(a^3,b^2)$ one has
$Sq^2a=0$, $Sq^2a^2=a^4=0$, $Sq^2b=b^2=0$, $Sq^1(a^2)=0=Sq^1b$, $a\cdot Sq^1a=a^3=0$, so $D\equiv0$
on $H^1,H^2$ and on $H^0$. Thus every $d^2$ in the relevant range is zero. Differentials into $E_{0,4}$ and out of
$E_{4,0}$ are handled by the retraction $r\colon M\to\RP^2$ and the inclusion $i$: $E_{0,4}$ in the spectral
sequence of $\RP^2$ cannot be hit, since the only possible sources $E_{2,3},E_{3,2},E_{4,1}$ vanish for dimensional
reasons or because $\Omspin_3=0$, so by naturality of $r_*$ it survives in $M$. The fundamental class gives a permanent cycle
generating $E_{4,0}$, because $(M,\bar\nu)$ itself is a $B$-manifold of twisted degree $1$. For the
remaining terms in total degree $4$, the differentials $d^2\colon E_{3,1}\to E_{1,2}$ and $d^2\colon E_{4,1}\to E_{2,2}$ are dual to $D$ on $H^1$ and $H^2$ and vanish, and every other differential into or out of these terms has source or target zero, because $\Omspin_3=0$, $E_{1,3}=0$ and $E_{p,q}=0$ for $p>4$. Taking the kernel of the twisted degree removes $E_{4,0}$ and leaves a group of order $2\cdot4\cdot2=16$.
\end{proof}

\begin{proposition}\label{prop:detect}
Let $j\colon S^2\to M$, $j(y)=(x_0,y)$, let $\kappa_2=j\circ\eta_2\eta_3$ be a generator of the second summand of $\pi_4(M)$ and let
$\kappa_1=q\circ\eta_2\eta_3$ be a generator of the first, where $q\colon S^2\to\RP^2\subset M$ is the double cover. Then in $\widehat\Omega^B_4(M)$:
\begin{enumerate}[label=(\roman*)]
  \item $\alpha(k_{\kappa_2})\ne0$, and $\alpha(k_{\kappa_2})$ is represented by $[S^2]\otimes\eta^2\ne0$ in $E^\infty_{2,2}$;
  \item $\alpha(k_{\kappa_1})=[K3\to\mathrm{pt}]\ne0$, the generator of $E^\infty_{0,4}=\Z/2$.
\end{enumerate}
\end{proposition}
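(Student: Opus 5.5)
Both parts rest on one formula for $\alpha$ on pinch-fold maps. Since $k_\kappa$ equals $\id$ on $M^{(3)}$ and is changed only on a small disc, I first want to show
\[
\alpha(k_\kappa)=(k_\kappa)_*[M,\id,\sigma]-[M,\id,\sigma]=[S^4,\kappa,\sigma_{S^4}]\in\widehat\Omega^B_4(M).
\]
Here $S^4$ carries the structure obtained by restricting $\sigma$ to the disc. That structure is unique because $S^4$ is simply connected and the pulled-back $E$ is trivial over it. To prove the formula, I use the pinch bordism: the trace of collapsing the boundary of the small disc gives a $B$-bordism over $M$ from $(M,k_\kappa)$ to $(M,\id)\sqcup(S^4,\kappa)$. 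Concretely, take $M\times I$, attach a $1$-handle connecting to $S^4\times I$ at the top level, and map it by $\nabla\circ(\id\vee\kappa)$. The structures match by \cref{lem:canon}. So $\alpha(k_\kappa)$ is the image of $\kappa$ under the Hurewicz-type map $\Theta\colon\pi_4(M)\to\Omega^B_4(M)$, $\kappa\mapsto[S^4,\kappa]$. This explains the notation $\Theta(\kappa_2)$ in \cref{rem:Pinform}. Also $[S^4,\kappa]$ has twisted degree $0$, so it lies in $\widehat\Omega^B_4(M)$.

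Next I compute $\Theta$ on the two generators by factoring through lower-dimensional bordism. $\kappa_2=j\circ\eta_2\eta_3$ factors through $j\colon S^2\to M$, so $\Theta(\kappa_2)=j_*[S^4,\eta_2\eta_3]$, computed in $\Omspin_4(S^2)$. In $\Omspin_*(S^2)=\Omspin_*\oplus\Omspin_{*-2}$, the map $\eta_2\eta_3\colon S^4\to S^2$ is represented, via Pontryagin--Thom, by the framed $2$-manifold $S^1\times S^1$ with Lie framings (the preimage of a point). Hence $[S^4,\eta_2\eta_3]=[S^2]\otimes\eta^2$, and it lies in filtration $2$ with image $[S^2]\otimes\eta^2$ in $E^\infty_{2,2}(S^2)$. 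Naturality of the AHSS under $j$ then puts $\Theta(\kappa_2)$ at $j_*[S^2]\otimes\eta^2\in E^\infty_{2,2}(M)$. That class is nonzero by \cref{prop:AHSS}, because $[S^2]\ne0$ in $H_2(M;\Z/2)$ and no differential hits it. This gives (i).

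For (ii), $\kappa_1=q\circ\eta_2\eta_3$ with $q\colon S^2\to\RP^2$. In $S^2$ one has $[S^4,\eta_2\eta_3]=[S^2]\otimes\eta^2$, but $q_*[S^2]=2[\RP^2]$ with twisted coefficients, which is $0$ mod $2$. So the filtration-$2$ part dies, and the class drops to lower filtration. There the $E^\infty_{1,3}$ term is $0$, so the only place left is $E^\infty_{0,4}=\Z/2$, generated by $K3$ (or $\RP^4$-type Pin generators). To identify the class I would use the retraction $r\colon M\to\RP^2$ and work in $\Omega^B_4(\RP^2)=\Ompin_4(\RP^2\langle w_1\rangle)$. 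I would run a hidden-extension argument: the element $\eta\cdot[T^2\text{ over }S^2]$ pushed to $\RP^2$ equals $2x$ for the $\Z/4$-type element $x$ coming from $E_{2,2}$. Via the relation $2\cdot[\RP^2\text{-class}]$ in Pin bordism ($\Ompin_2=\Z/8$ for the point, with $\eta^2$-extension), it becomes $[K3]$ (mod 16 signature/Kervaire considerations). Concretely, I would compute the transfer to the double cover, where $\kappa_1$ lifts to $\wt\kappa_1=\iota\circ\eta_2\eta_3+T\iota\circ\eta_2\eta_3$. I would then detect $E^\infty_{0,4}$ by a characteristic number, namely the Rokhlin-type invariant $\mathrm{sign}/16$ of the spin $4$-manifold obtained as the fibre over a point after transversality.

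I expect this last identification to be the main obstacle. It is an extension problem in the AHSS for $\Omega^B_4(\RP^2)$, namely $E_{2,2}=\Z/2$ versus $E_{0,4}$. The approach I would try first is to realize $\Theta(\kappa_1)$ geometrically: make the map $S^4\to\RP^2$ transverse to $\RP^1$. The preimage is a framed $3$-manifold carrying the structure. Then compute the resulting invariant through the known isomorphism $\Ompin_4\cong\Z/16$ restricted to the image of $\Omspin_4$, where $K3$ maps to $8$.
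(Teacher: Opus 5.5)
Your cut-and-paste formula $\alpha(k_\kappa)=\Theta(\kappa)=[S^4,\kappa]$ and your proof of (i) agree with the paper. So does your first step in (ii): since $q_*[S^2]=0$ in $H_2(\RP^2;\Z/2)$ and $E_{1,3}=0$, the class $\Theta(\kappa_1)$ lies in $\{0,[K3]\}$.

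The gap is the step that actually proves (ii), namely that $\Theta(\kappa_1)$ is $[K3]$ and not $0$. You leave this as ``the main obstacle'', and the tools you list cannot settle it. The class $[K3]$ comes from $\Omspin_4(\mathrm{pt})$, and each of those tools kills it or is not an invariant:
\begin{itemize}
\item The transfer to the double cover sends $[K3\to\mathrm{pt}]$ to $K3\sqcup(-K3)=0$. The deck transformation acts by $-1$ on the trivialised $q^*L$, so the two sheets get opposite orientations. The transfer also sends $\Theta(\kappa_1)$ to $x\pm T_*x=0$, so it carries no information.
\item Making maps transverse to $\RP^1$ kills everything coming from the disc $\RP^2\smallsetminus\nu(\RP^1)$. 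Again $[K3]\mapsto 0$.
\item The map to $\Omega^B_4(\RP^\infty)$ lands in normal $\mathrm{Pin}^+$ bordism, which vanishes in degree $4$. The group $\Z/16$ with $K3\mapsto 8$ belongs to the other twist $E=3L$ (the $\RP^4$ case), not to $E=L$.
\item A signature or Rokhlin-type invariant is not an invariant of $\Omega^B_4(\RP^2)$, because $B$-bordisms over $\RP^2$ need not be orientable. Indeed, the statement you are proving says that $S^4$, of signature $0$, is $B$-bordant over $\RP^2$ to $K3$.
\end{itemize}
The remarks about a hidden $\Z/4$ extension are not an argument either.

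The paper never looks for an invariant detecting $[K3]$. It uses two facts. First, $[K3]\ne0$ in $\Omega^B_4(\RP^2)$, since nothing can hit $E_{0,4}$. Second, it gets a relation from the Gysin sequence of $S(L)=S^2\to\RP^2\to\mathrm{Th}(L)$,
\[
\Omega^{(2L)}_4(\RP^2)\xrightarrow{\ \partial\ }\Omspin_4(S^2)\xrightarrow{\ q_*\ }\Omega^B_4(\RP^2),
\]
where $\partial$ takes the pulled-back double cover. The relation is produced as follows.
\begin{itemize}
\item An Enriques surface $Y$ has $w_2(Y)=a^2=w_2(2L)$, so it is a $2L$-twisted spin manifold over $\RP^\infty$.
\item In the spectral sequence for $\RP^\infty$ one has $E^2_{4,0}=0$, and $E_{3,1}$ supports a non-zero $d^2$ (dual to $D(a)=a^3$). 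Hence $Y$ is bordant over $\RP^\infty$ to some $N\to\RP^2$ of signature $-8$.
\item The double cover of $N$ has signature $-16$, so $\partial[N]=[K3]+\varepsilon x$ in $\Omspin_4(S^2)$.
\item Exactness gives $[K3]+\varepsilon\,\Theta(\kappa_1)=0$. Since $[K3]\ne0$ and $\Theta(\kappa_1)\in\{0,[K3]\}$, this forces $\varepsilon=1$ and $\Theta(\kappa_1)=[K3]$.
\end{itemize}
Your proposal is missing an input of this kind: a relation in $\Omspin_4(S^2)$ between $x$ and $K3$ modulo $\ker q_*$.
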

\begin{proof}
For $\kappa\in\pi_4(M)$ the pinch-fold map satisfies, in $\Omega^B_4(M)$,
\begin{equation}\label{eq:cutpaste}
  [M,\bar\nu,k_\kappa]=[M,\bar\nu,\id]+[S^4,\kappa],
\end{equation}
where $S^4$ carries its unique spin structure and the trivial twist. This is the usual cut and paste:
$k_\kappa$ is the identity outside a disc, and $M$ with the reference map $k_\kappa$ is bordant over $M$
to $M\sqcup S^4$. Put $\Theta(\kappa)=[S^4,\kappa]$, so that $\alpha(k_\kappa)=\Theta(\kappa)$.

(i) The map $j$ is covered by a map of twisted spectral
sequences from that of $S^2$ (untwisted, since $c\circ j$ is constant), and $j_*\colon H_2(S^2;\Z/2)\to H_2(M;\Z/2)$ is injective.
In $\Omega^{\mathrm{spin}}_4(S^2)=\Omega^{\mathrm{spin}}_4\oplus\Omega^{\mathrm{spin}}_2$ the class $x=[S^4,\eta_2\eta_3]$ is $[S^2]\otimes\eta^2$: the preimage of a regular value of
$\eta_2\eta_3$ is a torus with the Lie group framing, whose class generates $\pi_2^s\cong\Omspin_2=\Z/2$.
By naturality $\Theta(\kappa_2)=j_*x$ is represented by a non-zero element of $E^\infty_{2,2}(M)=E^2_{2,2}(M)$ (\cref{prop:AHSS}).

(ii) Write $\Theta(\kappa_1)=q_*x$ for $q_*\colon\Omega^{\mathrm{spin}}_4(S^2)\to\Omega^B_4(\RP^2)\to\Omega^B_4(M)$. The map $q_*$ is zero on $E_{2,2}$, because $q_*[S^2]=2[\RP^2]=0\in H_2(\RP^2;\Z/2)$, so
$\Theta(\kappa_1)$ lies in filtration $0$ of $\Omega^B_4(\RP^2)$, that is in $\{0,[K3]\}$, where $E^\infty_{0,4}(\RP^2)=\Z/2$ survives
because its possible sources $E_{2,3},E_{3,2},E_{4,1}$ vanish for dimensional reasons or since $\Omspin_3=0$; the same holds in $M$ by the retraction $M\to\RP^2$.
The cofibration $S^2=S(L)\to\RP^2\to\mathrm{Th}(L)$ gives a Gysin sequence
\[
  \Omega^{(2L)}_4(\RP^2)\xrightarrow{\ \partial\ }\Omega^{\mathrm{spin}}_4(S^2)\xrightarrow{\ q_*\ }\Omega^B_4(\RP^2),
\]
where $\Omega^{(2L)}$ is spin bordism twisted by $2L$ and $\partial$ sends $(N\to\RP^2)$ to the pulled back double cover with its induced structure.
Let $Y$ be an Enriques surface, with universal cover $K3$ and classifying map $f_\infty\colon Y\to\RP^\infty$ of this cover. It is orientable, not spin, and its $w_2$ lies in the kernel of $H^2(Y;\Z/2)\to H^2(K3;\Z/2)$, which is $\{0,a^2\}$ by the Gysin sequence.
Hence $w_2(Y)=a^2=w_2(2L)$, so $Y$ carries a $2L$-twisted spin structure and defines a class in $\Omega^{(2L)}_4(\RP^\infty)$. This class lies in the image of $\Omega^{(2L)}_4(\RP^2)$. Indeed, in the Atiyah--Hirzebruch spectral sequence for $\RP^\infty$ we have $E^2_{4,0}=H_4(\RP^\infty;\Z)=0$, and $E^2_{3,1}=H_3(\RP^\infty;\Z/2)$ supports the non-zero differential $d^2$ dual to $D(a)=Sq^2a+w_2(2L)\,a=a^3\ne0$. So every class in $\Omega^{(2L)}_4(\RP^\infty)$ has skeletal filtration at most $2$. Hence there is a closed $2L$-twisted spin manifold $N$ with a map $g\colon N\to\RP^2$ that represents the same class in $\Omega^{(2L)}_4(\RP^\infty)$ as $Y$. In particular $N$ is oriented with $\sigma(N)=\sigma(Y)=-8$.
The image of $[N,g]$ under $\partial$ is $[\widetilde N,\widetilde g]$, where $\widetilde N\to N$ is the pulled back double cover with its induced spin structure and $\widetilde g\colon\widetilde N\to S^2$ is the lift of $g$. Since $\sigma(\widetilde N)=2\sigma(N)=-16=\sigma(K3)$ and $K3$ generates $\Omspin_4$,
$[\widetilde N,\widetilde g]=K3_{\mathrm{pt}}+\varepsilon x$ for some $\varepsilon\in\{0,1\}$. Exactness gives $0=q_*[\widetilde N,\widetilde g]=[K3]+\varepsilon\Theta(\kappa_1)$ in $\Omega^B_4(\RP^2)$. As $[K3]\ne0$ and $\Theta(\kappa_1)\in\{0,[K3]\}$, this forces $\varepsilon=1$ and $\Theta(\kappa_1)=[K3]\ne0$.
\end{proof}

\begin{remark}\label{rem:k1}
\Cref{prop:kernel} (A) and \cref{prop:detect} (ii) are two independent proofs that $k_{\kappa_1}\ne\id$. The element $\alpha(k_{\kappa_1})$ is the class of a K3 surface in $E^\infty_{0,4}=\Z/2$, a term through which $\alpha$ does not pass in the oriented spin case, where the kernel of $\pi$ is detected in $E_{2,2}$ \cite[Proposition~4.5 and Corollary~4.9]{HK04}. For simply connected manifolds the pinch maps are detected by their normal invariants \cite[Theorems~2.2 and~5.1]{CH90}.
Here the normal invariant does not detect $k_{\kappa_1}$ through its $H^2$-component. Indeed, by Hypothesis~\ref{hyp:pinch} (\cref{sec:hyp}) the $H^2$-component of the normal invariant of the pinch map $k_{y\circ\eta^2}$ is $\mathrm{PD}\,h(y)$, where $h\colon\pi_2(M)\to H_2(M;\Z/2)$ is the mod~$2$ Hurewicz map and $\mathrm{PD}$ is the Poincar\'e duality isomorphism $H_2(M;\Z/2)\to H^2(M;\Z/2)$. For the class $y_1\in\pi_2(M)$ coming from the double cover $q$ we have $h(y_1)=q_*[S^2]=2[\RP^2]=0$, whereas $h(y_2)=[S^2]\ne0$ for the $S^2$-factor. So, assuming Hypothesis~\ref{hyp:pinch}, the non-trivial self-equivalence $k_{\kappa_1}$ has vanishing $H^2$-component of its normal invariant.
\end{remark}

\begin{remark}
With $E_{\infty}$ as in \cref{prop:AHSS}, $|\widehat\Omega^B_4(M)|=16$ while $|\Aut^+_\bullet(M)|=16$. The map $\alpha$
is a crossed homomorphism (\cref{prop:crossed}), not a homomorphism, and on the central
subgroup $K=\langle k_{\kappa_1},k_{\kappa_2}\rangle$ it is additive and injective (\cref{prop:detect}). Whether $ab$ and $s_\psi$ are seen by $\alpha$ is not determined here:
by \cref{rem:sminus} this depends on the class $[M,\id,\sigma]-[M,\id,\sigma+a]$, and $\ker\alpha$ is the group of self-equivalences that the braid
realizes by $B$-cobordisms.
\end{remark}

\subsection{The braid for \texorpdfstring{$\RP^4$}{RP4}}\label{sec:RP4}
Let $M=\RP^4$, $\pi=\Z/2$, $P=P_2(M)=\RP^\infty$ since $\pi_2(\RP^4)=0$.
We have $w(\nu_M)=(1+a)^{-5}=(1+a)^3$, so $w_2(\nu_M)=a^2=w_1^2$; $\wt M=S^4$ is spin,
and $E=3L$ (indeed $w(3L)=(1+a)^3$). So $M$ has normal $\mathrm{Pin}^-$ type, equivalently tangential $\mathrm{Pin}^+$, and
$\Omega^B_n(P)=\Omega^{\mathrm{Pin}^-}_n$ is the normal $\mathrm{Pin}^-$ bordism group:
$\Z/2,0,\Z/2,\Z/2,\Z/16,0$ in degrees $0,\dots,5$ (\cref{sec:preliminaries}).

\begin{proposition}\label{prop:RP4}
(i) $\widehat\Omega^B_4(\RP^4)\cong\Z/8$, and $c_*\colon\widehat\Omega^B_4(M)\to\Omega^B_4(P)=\Z/16$ is injective with image the subgroup of index $2$.
(ii) $\widehat\Omega^B_5(P,M)=0$.
(iii) $\Aut^+_\bullet(\RP^4)=1$.
\end{proposition}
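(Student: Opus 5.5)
Throughout, $M=\RP^4$ with normal $1$-type $E=3L$ and $P=\RP^\infty$. Part (i) comes from the long exact sequence of the pair $(P,M)$ in $\Omega^B_*$, part (ii) from the same sequence once (i) is known, and part (iii) from the third and fourth sequences of \cref{thm:braidNO}.

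\emph{Step 1: $\Omega^B_4(M)$.} I would run the twisted Atiyah--Hirzebruch spectral sequence $E^2_{p,q}=H_p(\RP^4;\Omspin_q\otimes\Zw)$. From the twisted cellular complex, $H_4(\RP^4;\Zw)=\Z$ and $H_0(\RP^4;\Zw)=\Z/2$, and the mod $2$ groups $H_3,H_2$ are $\Z/2$. So in total degree $4$ the terms are $E_{4,0}=\Z$, $E_{3,1}=\Z/2$, $E_{2,2}=\Z/2$ and $E_{0,4}=\Z/2$, with $\Omspin_3=0$. The differentials $d^2$ are dual to $D(x)=Sq^2x+a\,Sq^1x+a^2x$, since $w(3L)=1+a+a^2$. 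I would evaluate $D$ on $H^1$ and $H^2$ of $\RP^4$ and also of $\RP^\infty$, for comparison.

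\emph{Step 2: comparison with $P$.} Use naturality of $c_*$ into $\Omega^B_4(\RP^\infty)=\Omega^{\mathrm{Pin}^-}_4=\Z/16$, where the generator is $\RP^4$ itself. The class $[M,\mathrm{id},\sigma]$ maps to a generator of $\Z/16$, and it has twisted degree $1$. I expect $\Omega^B_4(M)\cong\Z\oplus\Z/8$, with $c_*$ sending $\widehat\Omega^B_4(M)$ onto $2\Z/16$. To get this I would show the following.
\begin{itemize}
\item The relative groups $\Omega^B_k(P,M)$ vanish for $k=4,5$. The relative spectral sequence has $E^2_{p,q}=H_p(\RP^\infty,\RP^4;\dots)$, which is zero for $p\le4$. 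The terms with $p=5$ lie in total degree $5$ only when $q=0$, and $E_{5,0}=H_5(\RP^\infty,\RP^4;\Zw)\cong\Z$ (twisted) must be handled.
\item The boundary map $\Omega^B_5(P,M)\to\Omega^B_4(M)$ sends the generator of $E_{5,0}$ to $[S^4\to\RP^4]$ modified by the attaching map. This is twisted degree $2$ in $\Z$, plus possibly a torsion part.
\end{itemize}
Then the long exact sequence
\[
\Omega^B_5(P)=0\to\Omega^B_5(P,M)\to\Omega^B_4(M)\to\Z/16\to\Omega^B_4(P,M)
\]
gives everything, once $\Omega^B_4(P,M)=0$ (from $p\ge5$) and $\Omega^B_5(P,M)=E_{5,0}\cong\Z$ are known. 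Hence $\Omega^B_4(M)$ is an extension of $\Z/16$ by $\Z$. Since the twisted degree is surjective onto $\Z$, and the $\Z$ from $P$ maps with degree $2$, the kernel $\widehat\Omega^B_4(M)$ injects into $\Z/16$. A counting check against the $E^\infty$ terms of Step 1 shows that its order is $8$. This yields (i).

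\emph{Part (ii).} An element of $\Omega^B_5(P,M)\cong\Z$ is sent by $\partial$ to a class of twisted degree $2n$. The subgroup $\widehat\Omega^B_5(P,M)$ therefore consists of the classes with $n=0$, so it is $0$.

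\emph{Part (iii).} By \cref{cor:im}, $\ker\pi=\operatorname{im}\gamma=0$, so $\pi\colon\Aut^+_\bullet(M)\to\Aut_\bullet(\RP^\infty)$ is injective. The target $\Aut_\bullet(K(\Z/2,1))$ is $\Aut(\Z/2)=1$, so $\Aut^+_\bullet(\RP^4)=1$.

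The main obstacle is Step 2: identifying $\Omega^B_5(P,M)$ with $\Z$ and computing the degree of its boundary. I would treat this through the cofibre $\RP^\infty/\RP^4$, which is $4$-connected with bottom cell $S^5$ attached by degree $1+(-1)^5$ in the twisted sense. The first attempt is to identify the boundary of the bottom cell with the double cover $S^4\to\RP^4$, which has twisted degree $2$.
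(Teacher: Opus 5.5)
Your proposal is correct and follows the paper's proof: the exact sequence of the $4$-connected pair $(\RP^\infty,\RP^4)$ with $\Omega^B_5(P,M)\cong H_5(P,M;\Zw)\cong\Z$, whose boundary has twisted degree $d_5=\pm2$, so that $c_*$ is injective on $\widehat\Omega^B_4(M)$ and $\widehat\Omega^B_5(P,M)=0$, and then the fourth braid sequence for (iii). The one deviation is your $E^\infty$ count for $|\widehat\Omega^B_4(M)|=8$. That count needs more than the $d^2$'s, because $d^3\colon E_{3,2}\to E_{0,4}$ and $d^4\colon E_{4,1}\to E_{0,4}$ must be excluded, for instance by naturality against $\RP^\infty$. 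It is also unnecessary: $\widehat\Omega^B_4(M)\cap\Z=0$, and the degree identifies $\Omega^B_4(M)/(\widehat\Omega^B_4(M)+\Z)$ with $\Z/2\Z$, so the sequence already gives $16/2=8$. The paper instead puts the image in the kernel of $\Z/16\to E^\infty_{4,0}(P)=\Z/2$ and lifts with even degree. Also, your bullet saying $\Omega^B_5(P,M)$ vanishes is a slip; it contradicts your later, correct $\Omega^B_5(P,M)\cong\Z$.
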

\begin{proof}
(i) The pair $(P,M)=(\RP^\infty,\RP^4)$ is $4$-connected, so $\Omega^B_n(P,M)=0$ for $n\le4$ and $\Omega^B_5(P,M)\cong H_5(P,M;\Zw)$.
In the cellular chain complex of $\RP^\infty$ with coefficients $\Zw$ the boundary $d_n\colon C_n\to C_{n-1}$ is $0$ for $n$ even and
multiplication by $2$ for $n$ odd. Hence $H_5(P,M;\Zw)=\mathrm{coker}(d_6)=\Z$, and likewise $H_4(M;\Zw)=\Z$,
$H_4(P;\Zw)=\Z/2$. The exact sequence of the pair together with $\Omega^B_5(P)=0$ gives
\[
0\to\Z=\Omega^B_5(P,M)\to\Omega^B_4(M)\to\Omega^B_4(P)=\Z/16\to0,
\]
and the image of the first term in $H_4(M;\Zw)=\Z$ is $d_5\Z=2\Z$, since the degree of the boundary of the $5$-cell is $d_5=2$.
It follows that $\widehat\Omega^B_4(M)\cap\Omega^B_5(P,M)=0$, so $c_*$ is injective on $\widehat\Omega^B_4(M)$. The degree is compatible
with $H_4(M;\Zw)=\Z\to H_4(P;\Zw)=\Z/2$ and the latter is detected by the epimorphism $\Z/16\to\Z/2$ (it is the lowest filtration of the
Atiyah--Hirzebruch spectral sequence, and $E^\infty_{4,0}(P)=\Z/2$ because $[\RP^4]$ is a $B$-manifold over $P$), so the image lies in its kernel $\Z/8$. Conversely, given $x\in\Z/8\subset\Z/16$ take a lift
$y\in\Omega^B_4(M)$; its degree is even, equal to $2m$, and subtracting $m$ times the generator of $\Omega^B_5(P,M)$ (degree $2$) gives a lift of $x$ of degree $0$.
(ii) The third sequence of \cref{thm:braidNO} reads $0=\Omega^B_5(P)\to\widehat\Omega^B_5(P,M)\xrightarrow{\partial}\widehat\Omega^B_4(M)\to\Omega^B_4(P)$.
By (i) the last map is injective, so $\im\partial=0$, and $\partial$ is injective because $\Omega^B_5(P)=0$.
(iii) The fourth sequence gives $\widehat\Omega^B_5(P,M)=0\to\Aut^+_\bullet(M)\to\Aut_\bullet(\RP^\infty)=\Aut(\Z/2)=1$.
\end{proof}

\subsection{Homeomorphisms and diffeomorphisms homotopic to pinch-fold maps}\label{sec:hyp}
The statements about homeomorphisms below depend on the following two facts, which are taken as hypotheses. Each is a result from the literature whose proof for the
case at hand is not written out, or is stated in the literature for a different generality.
\begin{hypothesis}\label{hyp:pinch}
Let $M$ be a closed non-orientable $4$-manifold with $\pi_1(M)=\Z/2$ and $w_2(\wt M)=0$. For $y\in\pi_2(M)$ the $H^2(M;\Z/2)$-component of the normal invariant of the pinch map $k_{y\circ\eta^2}$ in $[M,G/\mathrm{TOP}]\cong H^2(M;\Z/2)\oplus H^4(M;\Z)$ is $\mathrm{PD}\,h(y)$, where $h\colon\pi_2(M)\to H_2(M;\Z/2)$ is the mod $2$ Hurewicz map and $\mathrm{PD}\colon H_2(M;\Z/2)\to H^2(M;\Z/2)$ is Poincar\'e duality. For simply connected manifolds this is \cite[Theorem~5.1 and its proof]{CH90}, where the condition $w_2(\widetilde M)=0$ is replaced by $\langle w_2(X),h(y)\rangle=0$; this condition holds here, since $y$ lifts to $\widetilde M$. It is also the statement used in \cite[Remark~1.3]{HH23}, where $|\pi_1|=4$ and $h=0$.
\end{hypothesis}
\begin{hypothesis}\label{hyp:sigma}
For $M=\RP^2\times S^2$ the surgery obstruction $\sigma\colon[M,G/\mathrm{TOP}]\to L_4(\Z/2,-)=\Z/2$ is onto. (Here $L_4(\Z/2,-)\cong\Z/2$ is detected by the Arf invariant \cite[proof of Lemma~6]{HKT94}. On normal invariants we expect the obstruction to be $\sigma(x,y)=\langle x\cup w_1^2,[M]\rangle$, under which $b\mapsto\langle b\,a^2,[M]\rangle\ne0$.)
\end{hypothesis}
\begin{proposition}\label{prop:nodiffeo}
Let $M=\RP^2\times S^2$ with the fixed $B$-structure $\sigma$. For a diffeomorphism $\varphi$ of $M$ let $\varphi^*\sigma$ be the pull-back of $\sigma$ along $d\varphi$, using the canonical identification $c^*E\cong\varphi^*c^*E$ of \cref{lem:canon}. No diffeomorphism $\varphi$ with $\varphi^*\sigma=\sigma$ represents a non-trivial element of $K=\ker\pi$.
\end{proposition}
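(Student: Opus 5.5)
Suppose $\varphi$ is a diffeomorphism with $\varphi^*\sigma=\sigma$ whose class lies in $K=\ker\pi$. The plan is to show $\alpha(\varphi)=0$ and then use the injectivity of $\alpha$ on $K$ from \cref{prop:detect}.

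First, $\varphi\in\Aut^+_\bullet(M)$. Indeed, by \cref{lem:pinch}(ii) every element of $K$ has $s=1$, so $\alpha(\varphi)$ is defined by \cref{def:alpha}. The pair $(M,\id,\sigma)$ is a $B$-manifold over $M$, and $\varphi_*[M,\id,\sigma]=[M,\varphi,\sigma]$, where the structure is transported along the canonical identification of \cref{lem:transport}. Because $\varphi^*\sigma=\sigma$ is taken with respect to the same canonical identification $c^*E\cong\varphi^*c^*E$ from \cref{lem:canon}, the diffeomorphism $\varphi\colon(M,\sigma)\to(M,\sigma)$ gives an isomorphism of $B$-manifolds over $M$. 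It carries $(M,\varphi,\sigma)$, viewed as the source with reference map $\varphi$, to $(M,\id,\sigma)$. This is the case $x=0$ of the computation in \cref{rem:sminus}. Isomorphic $B$-manifolds over $M$ are bordant via the mapping cylinder of $\varphi$, so $[M,\varphi,\sigma]=[M,\id,\sigma]$ in $\Omega^B_4(M)$ and hence $\alpha(\varphi)=0$.

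Next, by \cref{prop:kernel} the class of $\varphi$ equals $k_\kappa$ for a unique $\kappa\in\pi_4(M)=\langle\kappa_1,\kappa_2\rangle$. We have $\alpha(k_\kappa)=\Theta(\kappa)=[S^4,\kappa]$ by \eqref{eq:cutpaste}, and $\Theta$ is a homomorphism $\pi_4(M)\to\widehat\Omega^B_4(M)$, additive under disjoint union of spheres. It remains to show that $\Theta$ is injective. By \cref{prop:detect}, $\Theta(\kappa_2)$ has non-zero image in $E^\infty_{2,2}$, so it lies in filtration $2$ but not in filtration $0$. Also $\Theta(\kappa_1)=[K3]$ is the non-zero element of $E^\infty_{0,4}$, which is filtration $0$. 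Consequently $\Theta(\kappa_1+\kappa_2)$ reduces in $E^\infty_{2,2}$ to the non-zero class of $\kappa_2$, so it is non-zero. Hence $\Theta(\kappa)=0$ forces $\kappa=0$, and $\varphi\simeq\id$.

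The step most likely to need care is the first one. One must check that the pull-back condition $\varphi^*\sigma=\sigma$, stated via $d\varphi$ and the canonical bundle identification, agrees with the transport $(g^*\Phi_H)_\#$ used to define $\varphi_*$. In other words, the structure pushed forward along $\varphi$ must be compared on the correct side. The route is to apply \cref{lem:canon}: both identifications arise from based homotopies $c\varphi\simeq c$, and such homotopies are unique up to contractible choice, so the two structures coincide.
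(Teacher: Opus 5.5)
Your proposal is correct and follows the paper's own proof. You get $\alpha(\varphi)=0$ from the $B$-isomorphism $\varphi\colon(M,\varphi,\sigma)\to(M,\id,\sigma)$, and then injectivity of $\kappa\mapsto\alpha(k_\kappa)=\Theta(\kappa)$ from the filtration positions of $\Theta(\kappa_1)$ and $\Theta(\kappa_2)$ given by \cref{prop:detect}; you spell out the $\kappa_1+\kappa_2$ case and the compatibility of $\varphi^*\sigma$ with the transport of \cref{lem:transport} a little more explicitly than the paper does.
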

\begin{proof}
Let $\varphi$ be a diffeomorphism with $\varphi^*\sigma=\sigma$ and $[\varphi]\in K$. Then $\varphi$ is an isomorphism of $B$-manifolds over $M$ from $(M,\varphi,\sigma)$ to $(M,\mathrm{id},\sigma)$, so $[M,\varphi,\sigma]=[M,\mathrm{id},\sigma]$ and $\alpha(\varphi)=0$. By \cref{lem:transport}, $\alpha$ depends only on the based homotopy class. For $\kappa\in\pi_4(M)$ we have $\alpha(k_\kappa)=\Theta(\kappa)$ by \eqref{eq:cutpaste}, and $\Theta$ is additive. By \cref{prop:detect}, $\Theta(\kappa_1)$ is a non-zero class in filtration $0$, and $\Theta(\kappa_2)$ is non-zero in $E^\infty_{2,2}$. Hence $\Theta(\kappa)\ne0$ for every $\kappa\ne0$, and $[\varphi]=k_\kappa$ forces $\kappa=0$.
\end{proof}

\begin{remark}\label{rem:Delta}
The condition $\varphi^*\sigma=\sigma$ in \cref{prop:nodiffeo} is needed. If $\varphi$ is a diffeomorphism with $\varphi^*\sigma=\sigma+a$, then by \cref{rem:sminus} one has
$\alpha(\varphi)=\varphi_*\Delta$ with $\Delta=[M,\id,\sigma]-[M,\id,\sigma+a]\in\widehat\Omega^B_4(M)$, so a diffeomorphism representing $k_{\kappa_1}$ can exist only if
$\Delta=[K3]$.
\end{remark}

\begin{proposition}\label{prop:Q6M}
Let $\mathcal S(M)=\mathcal S^{\mathrm{TOP}}(M)$ be the topological structure set, and $\mathcal H\le\Autb(M)$ the subgroup of classes
homotopic to a homeomorphism. Assume Hypothesis~\ref{hyp:sigma} for (i)--(iii) and Hypothesis~\ref{hyp:pinch} for (iv). Then
\begin{enumerate}[label=(\roman*)]
 \item $\ast(\RP^2\times S^2)\simeq\RP^2\times S^2$, and $|\mathcal S(M)|=4$ under Hypothesis~\ref{hyp:sigma};
 \item $[\Autb(M):\mathcal H]=2$, so $|\mathcal H|=16$;
 \item $\mathcal H\cap K$ has order $2$, where $K=\langle k_{\kappa_1},k_{\kappa_2}\rangle=\ker\pi$, and $\mathcal H\supseteq S=\langle a,b,s_\psi\rangle$;
 \item $\mathcal H\cap K=\langle k_{\kappa_1}\rangle$, so $k_{\kappa_2}$ and $k_{\kappa_1}k_{\kappa_2}$ are not homotopic to homeomorphisms, and the $H^2(M;\Z/2)$-component of the normal invariant of $k_{\kappa_2}$ is $a^2$.
\end{enumerate}
\end{proposition}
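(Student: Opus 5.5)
The plan is to run the topological surgery exact sequence for $M=\RP^2\times S^2$ with $\pi_1=\Z/2$ and $w=-$. Freedman's theory applies since $\Z/2$ is good. The sequence reads
\[
L_5(\Z/2,-)\to\mathcal S(M)\xrightarrow{\ \eta\ }[M,G/\mathrm{TOP}]\xrightarrow{\ \sigma\ }L_4(\Z/2,-).
\]
We have $L_5(\Z/2,-)=0$ and $[M,G/\mathrm{TOP}]\cong H^2(M;\Z/2)\oplus H^4(M;\Z)=(\Z/2)^2\oplus\Z/2$. By Hypothesis~\ref{hyp:sigma}, $\sigma$ is onto $\Z/2$, so $\mathcal S(M)\cong\ker\sigma$ has order $4$.

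For (i), the star partner $\ast M$ differs from $M$ by the Kirby--Siebenmann invariant. It is homotopy equivalent to $M$ and therefore an element of $\mathcal S(M)$. To see that it is homeomorphic to $\RP^2\times S^2$, I would use the HKT classification \cite{HKT94}. Alternatively, I would exhibit a self-homotopy equivalence whose normal invariant has non-zero $H^4$-component, which makes the star construction trivial on this homeomorphism type; I expect the HKT argument to be the cleaner route.

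For (ii) and (iii), $\Autb(M)$ acts on $\mathcal S(M)$ by composition, $f\cdot[N\to M]=[N\to M\xrightarrow{f}M]$. The stabiliser of the base point $[\id]$ is exactly $\mathcal H$, modulo the passage between based and free classes. That passage is harmless by \cref{cor:free} and the point-pushing homeomorphism $P_\ell$, which is based, lies in $\mathcal H$ and has $s=-1$. Hence $[\Autb(M):\mathcal H]$ equals the size of the orbit of $[\id]$.

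Next I would show $S=\langle a,b,s_\psi\rangle\subseteq\mathcal H$. This is immediate, since $a$, $b$ and $s_\psi$ are diffeomorphisms. Because $\Autb(M)=S\times K$, the orbit size is $[K:K\cap\mathcal H]$. The elements $f\in K$ act by composition formulas of the form $\eta(f\circ h)=\eta(f)+(f^{-1})^*\eta(h)$. Since $K$ acts trivially on $H^*(M;\Z/2)$, the orbit of $[\id]$ is $\{\eta(k):k\in K\}$.

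The heart of the argument is then to compute $\eta$ on $K$, and this is the main obstacle. The $H^4$-component of $\eta(k_\kappa)$ vanishes because $k_\kappa$ has degree one on the top cell with trivial change of signature or Arf invariant; a transfer to $S^2\times S^2$ would confirm this. The problem therefore reduces to the $H^2$-component. For (ii) and (iii) without Hypothesis~\ref{hyp:pinch}, I would argue that $\eta|_K$ is a homomorphism, since the $k_\kappa$ are central and act trivially on cohomology, and that it is non-zero. Non-vanishing would come from a counting argument: the $H^4$-component of $\mathcal S(M)$ is realised by $\ast M\simeq M$, and $\ast M\cong M$ by (i). This gives $|\mathcal S(M)/\Autb(M)|=2$, forcing the orbit of $[\id]$ to have size exactly $2$. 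This step, matching the four structures with two homeomorphism classes ($M$ and the fake one with $\sigma$-image), is where I am least certain and would check against \cite{HKT94}. Granting it, $[K:K\cap\mathcal H]=2$, so $|\mathcal H\cap K|=2$, $[\Autb(M):\mathcal H]=2$ and $|\mathcal H|=16$.

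For (iv), I would apply Hypothesis~\ref{hyp:pinch}. It gives $\eta(k_{\kappa_i})_{H^2}=\mathrm{PD}\,h(y_i)$. Here $h(y_1)=0$, as computed in \cref{rem:k1}, so $k_{\kappa_1}\in\mathcal H$. For the other generator, $h(y_2)=[S^2]$ and $\mathrm{PD}[S^2]=a^2$, so $\eta(k_{\kappa_2})\ne0$. Hence $\mathcal H\cap K=\langle k_{\kappa_1}\rangle$, and neither $k_{\kappa_2}$ nor $k_{\kappa_1}k_{\kappa_2}$ is homotopic to a homeomorphism.
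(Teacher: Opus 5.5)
\textbf{Verdict: there is a genuine gap.} It starts in (i) and carries into your counting for (ii)--(iv).

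\textbf{The false claim in (i).} Statement (i) asserts only a homotopy equivalence $\ast M\simeq M$. You set out to prove that $\ast M$ is \emph{homeomorphic} to $\RP^2\times S^2$, and this is false. By construction $\mathrm{KS}(\ast M)\ne\mathrm{KS}(M)=0$, and $\mathrm{KS}$ is a homeomorphism invariant. For the same reason your alternative cannot exist: a self-equivalence of $M$ ``making the star construction trivial'' has domain $M$, so it cannot produce the other $\mathrm{KS}$ value. Meanwhile the homotopy equivalence $\ast M\simeq M$, which is what (i) actually claims, is only asserted in your sketch. It needs an argument. One option is that the normal map $M\#E_8\to M$ has vanishing surgery obstruction because $L_4(1)\to L_4(\Z/2,-)$ is zero, so surgery produces $\ast M$ together with a homotopy equivalence to $M$. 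Another is \cite[Corollary~1]{HKT94}.

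\textbf{Why the counting for (ii)--(iii) fails.} Knowing there are two $\Autb(M)$-orbits on a $4$-element set does not force the orbit of $[\id]$ to have size $2$, since orbit sizes $1+3$ are not excluded. Your second homeomorphism class, ``the fake one with $\sigma$-image'', also does not make sense: every structure has normal invariant in $\ker\sigma$.

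\textbf{The missing idea: $\mathrm{KS}$ of the domain.} The Kirby--Siebenmann invariant of the domain is preserved by the $\Autb(M)$-action on structures.
\begin{itemize}
\item $\ast$ is a free involution on $\mathcal S(M)$ that reverses $\mathrm{KS}$. Hence $\mathcal S_0=\{\mathrm{KS}=0\}$ has exactly two elements.
\item By \cite[Theorem~3]{HKT94}, every $\mathrm{KS}=0$ manifold homotopy equivalent to $M$ is homeomorphic to $M$.
\item So $\Autb(M)$ acts transitively on $\mathcal S_0$ with stabiliser $\mathcal H$. This gives $[\Autb(M):\mathcal H]=2$.
\end{itemize}
Then (iii) follows, as you say, from $S\le\mathcal H$ and $\Autb(M)=S\times K$.

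\textbf{The same gap in (iv).} To get $k_{\kappa_1}\in\mathcal H$ you need the full normal invariant of $k_{\kappa_1}$ to vanish, including its $H^4$-component. Your proposed check of that component by passing to $S^2\times S^2$ cannot work: $H^4(M;\Z)=\Z/2$ pulls back to zero in the torsion-free group $H^4(S^2\times S^2;\Z)=\Z$. Also, a ``change of signature'' is not defined for non-orientable $M$. The fix is the same $\mathrm{KS}$ device. Once Hypothesis~\ref{hyp:pinch} gives vanishing $H^2$-component, $\eta([M,k_{\kappa_1}])$ lies in $\{\eta(s_0),\eta(\ast s_0)\}$, where $s_0=[M,\id]$. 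The $\mathrm{KS}$ of the domain rules out $\ast s_0$. Injectivity of $\eta$ then gives $[M,k_{\kappa_1}]=s_0$, that is, $k_{\kappa_1}\in\mathcal H$.
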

\begin{proof}
(i) The homotopy equivalence $\ast M\simeq M$ is part of the construction in (ii), and it also follows from \cite[Corollary~1]{HKT94}, since both manifolds have $w_2$-type~II, $e=2$ and $w_1^4=0$. Since $L_5(\Z/2,-)=0$ the normal invariant map
$\eta\colon\mathcal S(M)\to[M,G/\mathrm{TOP}]\cong H^2(M;\Z/2)\oplus H^4(M;\Z)$ is injective with image $\ker\sigma$, and
$H^2(M;\Z/2)=\langle a^2,b\rangle\cong(\Z/2)^2$, $H^4(M;\Z)=\Z/2$ (twisted Poincar\'e duality gives $H^4(M;\Z)\cong H_0(M;\Zw)$). So $|[M,G/\mathrm{TOP}]|=8$ and $|\mathcal S(M)|=4$ when $\sigma$ is onto.

(ii) The twin $\ast N'$ of a structure (Teichner's star construction \cite{T97}) $[N',g]$ is obtained from the normal map $N'\#E_8\to N'$, whose normal invariant is the class $e$ of the collapse $N'\to S^4\to G/\mathrm{TOP}$ and whose surgery obstruction vanishes because $L_4(1)\to L_4(\Z/2,-)$ is zero \cite[p.~651]{HKT94}. Hence $\ast$ is a free involution on $\mathcal S(M)$ which reverses $\mathrm{KS}$, and $\mathcal S_0=\{\mathrm{KS}=0\}$ has $2$ elements. By \cite[Theorem~3]{HKT94} every closed manifold in the homotopy type of $M$ with $\mathrm{KS}=0$ is homeomorphic to $M$ (the entries of type~II with $e=2$ are $M$ and $\ast M$ only). If $[N',g]\in\mathcal S_0$ and $\varphi\colon M\to N'$ is a homeomorphism then $[N',g]=[M,g\varphi]$ with $g\varphi\in\Autb(M)$. So $\Autb(M)$ acts transitively on $\mathcal S_0$ through $f\cdot[N',g]=[N',fg]$, and the stabiliser of $[M,\id]$ is $\mathcal H$. Hence $[\Autb(M):\mathcal H]=|\mathcal S_0|=2$.

(iii) $a,b,s_\psi$ are diffeomorphisms, so $S\le\mathcal H$ and $S\cap K=1$. Since $|\Autb(M)|=32$ and $\Autb(M)=S\times K$ we get $|\mathcal H|=16=|\mathcal H\cap K|\cdot|\mathcal H K/K|\le|\mathcal H\cap K|\cdot8$ and $\mathcal H\ne\Autb(M)$, so $|\mathcal H\cap K|=2$.

(iv) Let $\kappa=y\circ\eta^2$ and $s=[M,k_\kappa]\in\mathcal S(M)$. Every element of $K$ has this form with $y=0$, $y_1$, $y_2$ or $y_1+y_2$, since $\pi_4(M)=\pi_2(M)\circ\eta^2$ and $[y_1,y_2]=0$. By Hypothesis~\ref{hyp:pinch} the $H^2$-component of $\eta(s)$ is $\mathrm{PD}\,h(y)$. Now $h(y_1)=q_*[S^2]=0$ for the class $y_1$ coming from the double cover, and $h(y_2)=[S^2]$ for the $S^2$-factor, with $\mathrm{PD}[S^2]=a^2$. If $h(y)=0$, then $\eta(s)\in\{\eta(s_0),\eta(\ast s_0)\}$, since these are the two elements of $\eta(\mathcal S(M))$ with vanishing $H^2$-component. The domain of $s$ is $M$ and has $\mathrm{KS}=0$, whereas the domain of $\ast s_0$ has $\mathrm{KS}=1$. So $s=s_0$ by injectivity of $\eta$, that is $k_\kappa\in\mathcal H$. If $h(y)\ne0$, the $H^2$-component of $\eta(s)$ is $a^2$, so $s\ne s_0$ and $k_\kappa\notin\mathcal H$.
\end{proof}

\begin{remark}
For the statement $\mathcal H\cap K=\langle k_{\kappa_1}\rangle$ there are two routes, the counting statement \cref{prop:Q6M}(iii) and the normal invariants of \cref{prop:Q6M}(iv).
The counting route forces $|\mathcal H\cap K|=2$, and if $k_{\kappa_1}\in\mathcal H$ then it forces $k_{\kappa_2},k_{\kappa_1}k_{\kappa_2}\notin\mathcal H$ without any use of $\alpha(k_{\kappa_2})$.
So $\mathcal H\cap K=1$ is excluded by Hypothesis~\ref{hyp:sigma} and \cite[Theorem~3]{HKT94} alone, whereas distinguishing the three subgroups of order $2$ in $K$ requires Hypothesis~\ref{hyp:pinch}, which by (iv) suffices for this purpose without Hypothesis~\ref{hyp:sigma}.
In the language of the structure set, $\mathcal S(M)=\{s_0,\ k_{\kappa_2}\cdot s_0,\ \ast s_0,\ \ast(k_{\kappa_2}\cdot s_0)\}$, where $s_0=[M,\id]$, and $k_{\kappa_2}$ is the pinch-fold map detected in $E^\infty_{2,2}$.
\end{remark}

\section{The Hambleton--Hillman manifold \texorpdfstring{$N$}{N}}
\label{sec:manifoldN}

Hambleton and Hillman \cite{HH23} construct a closed non-orientable
$4$-manifold $N$ with $\pi_1(N)\cong(\Z/2)^2$, $\chi(N)=1$ and universal cover $S^2\times S^2$. It is the quotient $N=(\RP^4\#_{S^1}\RP^4)/F$ by a free
involution $F$, has the quadratic $2$-type of $\RP^2\,\widetilde\times\,\RP^2$, and is not
homotopy equivalent to a geometric quotient of $S^2\times S^2$. Its orientation double cover is
$S(\eta\oplus2\varepsilon)$. The manifold $N$ has two non-orientable double
covers, namely $\RP^4\#_{S^1}\RP^4$, which has $v_2\ne0$, and a second one with $v_2=0$ which by
\cite[Remark~9.2]{HH23} is homotopy equivalent to $\RP^2\times S^2$. In this section we record the facts needed to apply
\cref{thm:braidNO} to $N$ and determine the action of self-equivalences on $\pi_2$.

Write $\pi=\langle t,u\mid t^2=u^2=(tu)^2=1\rangle$ and $t^*,u^*$ for the dual basis of $H^1(\pi;\Z/2)=H^1(N;\Z/2)$.
By \cite[Proposition~7.2, Remark~9.2, Proposition~9.1 and the proof of Theorem~9.3]{HH23}, $w_1(N)=t^*+u^*$, $v_2(N)=t^*u^*+(u^*)^2$, the classes satisfy $(u^*)^3=0\ne(t^*)^3$, the universal cover is homotopy equivalent to $S^2\times S^2$ and
$\pi_2(N)=\Z_t\oplus\Z_u$ with $\Z_t=\Lambda/(t+1,u-1)$, $\Z_u=\Lambda/(t-1,u+1)$, where $\Lambda=\Z\pi$. Moreover $N$ is homotopy equivalent to
$Y_{\eta_1'}=K'\cup_{[J']+\eta_1'}e^4$ with $K'=\RP^2\widetilde\times\RP^2\smallsetminus D^4$.

\paragraph{The normal \texorpdfstring{$1$}{1}-type.} Since $w_2(\nu_N)=v_2(N)$, we need $E\to B\pi$ with $w_1(E)=t^*+u^*$ and $w_2(E)=t^*u^*+(u^*)^2$. The bundle
$E=L_t\oplus3L_u$ works, where $L_t,L_u$ are the real line bundles with $w_1=t^*,u^*$: $w(E)=(1+t^*)(1+u^*)^3=1+(t^*+u^*)+(t^*u^*+(u^*)^2)+\cdots$. So $B=B\mathrm{Spin}\times B\pi$ with $\xi=\gamma\oplus E$ is a normal $1$-type of $N$, the universal cover is spin, and \cref{thm:braidNO} applies to $N$.

\paragraph{The torsor of \texorpdfstring{$B$}{B}-structures.} The set of $B$-structures on $\nu_N$ with prescribed $B\pi$-component and fixed orientation of $\nu_N\ominus c^*E$ is a torsor over $H^1(N;\Z/2)=(\Z/2)^2$, so it has four elements, and the opposite orientation gives four more. (For $\RP^2\times S^2$ the numbers are two and two.) Let $\mathrm{Diff}^+(N)$ be the group of diffeomorphisms of $N$ fixing the base point whose based lift to the universal cover has degree $1$. For $f\in\mathrm{Diff}^+(N)$ we have $f^*\sigma=\sigma+x(f)$ for a well defined $x(f)\in H^1(N;\Z/2)$, and since $f^*$ is trivial on $H^1(N;\Z/2)$ the map
\[
x\colon\mathrm{Diff}^+(N)\to(\Z/2)^2,\qquad x(fg)=x(f)+x(g),
\]
is a homomorphism, which we call the spin character. It depends on the diffeomorphism and not only on its class in $\Autp(N)$. By \cref{rem:sminus}, a diffeomorphism $f$ with $x(f)\ne0$ lies in $\ker\alpha$ if and only if $[N,\id,\sigma]=[N,\id,\sigma+x(f)]$ in $\widehat\Omega^B_4(N)$.

\begin{proposition}\label{prop:AutPN}
Every $f\in\Autb(N)$ induces the identity on $\pi$ and $\pm1$ on each of $\Z_t$ and $\Z_u$. If $(\varepsilon_t,\varepsilon_u)$ are the signs, then the degree of the lift of $f$ to the universal cover is $s(f)=\varepsilon_t\varepsilon_u$.
\end{proposition}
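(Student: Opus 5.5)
The plan has three parts: pin down the action on $\pi$ using homotopy-invariant characteristic classes, then the action on $\pi_2$ using the module structure, and finally the degree of the lift using the intersection form of $\wt N\simeq S^2\times S^2$.

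\emph{Action on $\pi$.} Here $\Aut(\pi)=GL_2(\Z/2)$ has order $6$, and it acts faithfully on the three non-zero classes $t^*,u^*,t^*+u^*$ of $H^1(\pi;\Z/2)=H^1(N;\Z/2)$. For $f\in\Autb(N)$ the induced map $f^*$ on $H^1(N;\Z/2)$ is dual to $f_*$ on $\pi$. It preserves $w_1(N)=t^*+u^*$, because $w_1$ is a homotopy invariant. It also preserves the set of classes $x$ with $x^3=0$, because $f^*$ is a ring automorphism of $H^*(N;\Z/2)$. By the facts quoted from \cite{HH23}, $(u^*)^3=0\ne(t^*)^3$, so $f^*u^*=u^*$. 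Two of the three non-zero classes are fixed, hence $f^*=\id$ on $H^1$, and therefore $f_*=\id$ on $\pi$. Alternatively, invariance of $v_2(N)=t^*u^*+(u^*)^2$ could serve as a cross-check.

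\emph{Action on $\pi_2$.} Since $f_*=\id$ on $\pi$, the induced map on $\pi_2(N)=\Z_t\oplus\Z_u$ is a $\Lambda$-module automorphism. I will show it is diagonal by the argument of \cref{lem:AutP}. A $\Lambda$-map $\phi\colon\Z_t\to\Z_u$ satisfies $\phi(-n)=\phi(tn)=t\phi(n)=\phi(n)$, so $2\phi=0$ and hence $\phi=0$; the same holds in the other direction. The $\Lambda$-automorphisms of a rank-one module $\Z_t$ or $\Z_u$ are $\pm1$. This gives the signs $(\varepsilon_t,\varepsilon_u)$.

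\emph{The degree.} The based lift $\wt f$ is $\pi$-equivariant because $f_*=\id$ on $\pi$, and on $\pi_2(\wt N)=H_2(\wt N)\cong\Z^2$ (Hurewicz) it acts as $\operatorname{diag}(\varepsilon_t,\varepsilon_u)$ with respect to generators $e_t,e_u$ of the two summands. Let $\lambda$ be the intersection form on $H_2(\wt N)$; it is unimodular and even, since $\wt N\simeq S^2\times S^2$. The proof then has two steps.
\begin{enumerate}[label=(\alph*)]
\item Show that $e_t,e_u$ form a hyperbolic basis. A deck transformation $g$ satisfies $\lambda(gx,gy)=\deg(g)\,\lambda(x,y)$ with $\deg(g)=(-1)^{w_1(g)}$. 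Both $t$ and $u$ are orientation reversing, because $w_1=t^*+u^*$ takes the value $1$ on each. Now $t$ acts by $-1$ on $e_t$, so $\lambda(e_t,e_t)=\lambda(te_t,te_t)=-\lambda(e_t,e_t)$, which gives $\lambda(e_t,e_t)=0$. Using $u$ in the same way gives $\lambda(e_u,e_u)=0$. Unimodularity then forces $\lambda(e_t,e_u)=\pm1$.
\item Compute the degree. We have $\lambda(\wt f_*x,\wt f_*y)=\deg(\wt f)\,\lambda(x,y)$. Evaluating at $x=e_t$, $y=e_u$ gives $\varepsilon_t\varepsilon_u\lambda(e_t,e_u)=\deg(\wt f)\lambda(e_t,e_u)$. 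Since $\lambda(e_t,e_u)=\pm1\ne0$, it follows that $s(f)=\deg\wt f=\varepsilon_t\varepsilon_u$.
\end{enumerate}

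The step I expect to need the most care is (a). It relies on the identification of the summands $\Z_t,\Z_u$ inside $H_2(\wt N)$ with their $\pi$-action, and on the sign convention $\deg(g)=(-1)^{w_1(g)}$ for the deck transformations, both drawn from \cite{HH23}. Once these are in place, everything else is formal.
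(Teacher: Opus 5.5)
Your proposal is correct and follows the paper's proof essentially step for step. Invariance of $w_1=t^*+u^*$ together with $(u^*)^3=0\ne(t^*)^3$ rules out the swap, so $f_*=\id$ on $\pi$; the sign-action argument kills the off-diagonal $\Lambda$-maps; and isotropy of $\Z_t,\Z_u$ under the orientation-reversing deck transformations $t,u$, combined with $\lambda(\wt f_*x,\wt f_*y)=\deg(\wt f)\,\lambda(x,y)$ and $\lambda(e_t,e_u)=\pm1$, gives $s(f)=\varepsilon_t\varepsilon_u$.
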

\begin{proof}
A based self-equivalence $f$ induces an automorphism $f_1$ of $\pi$ preserving $w_1(N)=t^*+u^*$. The automorphisms of $\pi$ are the permutations of the three non-trivial elements, and those preserving $t^*+u^*$ (the character with kernel $\langle tu\rangle$) fix $tu$, so $f_1\in\{\id,\,t\leftrightarrow u\}$. Since $f_1^*$ is a ring automorphism of $H^*(N;\Z/2)$ and $(t^*)^3\ne0=(u^*)^3$, the swap is excluded. Thus $f_1=\id$ and $f_{2}=\pi_2(f)$ is $\Lambda$-linear. As $\mathrm{Hom}_\Lambda(\Z_t,\Z_u)=\mathrm{Hom}_\Lambda(\Z_u,\Z_t)=0$, we get $f_2=\varepsilon_t\oplus\varepsilon_u$ with $\varepsilon_t,\varepsilon_u\in\{\pm1\}$, because $\mathrm{End}_\Lambda(\Z_t)=\mathrm{End}_\Lambda(\Z_u)=\Z$ and an automorphism of $\Z$ is $\pm1$. The lift of $f$ to the universal cover $\widetilde N\simeq S^2\times S^2$ acts on $H_2(\widetilde N)=\Z_t\oplus\Z_u$, whose intersection form $\lambda$ is hyperbolic with $\Z_t$ and $\Z_u$ isotropic, by $\mathrm{diag}(\varepsilon_t,\varepsilon_u)$. (For a generator $e$ of $\Z_t$ the deck transformation $t$ acts by $-1$ and reverses the orientation of $\widetilde N$, so $\lambda(e,e)=\lambda(te,te)=-\lambda(e,e)$; the same argument applies to $\Z_u$ and $u$.) For a homotopy equivalence $\widetilde f$ one has $\lambda(\widetilde f_*x,\widetilde f_*y)=\deg(\widetilde f)\,\lambda(x,y)$, and $\lambda(e_t,e_u)=\pm1$ for generators $e_t,e_u$, so $s(f)=\varepsilon_t\varepsilon_u$.
\end{proof}

\section*{Acknowledgements}
I used an AI language model (Claude, Anthropic) 
to edit the manuscript’s language, check LaTeX, and review selected mathematical arguments.  I have verified all of the mathematics and take full responsibility for the content.


\begin{thebibliography}{HKT94}

\bibitem[BM25]{BM25}
  R.\,I.\ Baykur and P.\ Morgan,
  \textit{On nonorientable 4-manifolds},
  Preprint, arXiv:2506.20950, 2025.

\bibitem[Fre82]{Freed82}
  M.\,H.\ Freedman,
  \textit{The topology of four-dimensional manifolds},
  J.\ Differential Geom.\ \textbf{17} (1982), no.\ 3, 357--453.

\bibitem[CH90]{CH90}
  T.\,D.\ Cochran and N.\ Habegger,
  \textit{On the homotopy theory of simply connected four manifolds},
  Topology \textbf{29} (1990), no.\ 4, 419--440.

\bibitem[FQ90]{FQ90}
  M.\,H.\ Freedman and F.\ Quinn,
  \textit{Topology of 4-Manifolds},
  Princeton Mathematical Series \textbf{39}, Princeton University Press, 1990.

\bibitem[HH23]{HH23}
  I.\ Hambleton and J.\,A.\ Hillman,
  \textit{Quotients of $S^2\times S^2$},
  J.\ London Math.\ Soc.\ (2)\ \textbf{108} (2023), no.\ 4, 1393--1416.
  \href{https://doi.org/10.1112/jlms.12783}{doi:10.1112/jlms.12783}

\bibitem[HK04]{HK04}
  I.\ Hambleton and M.\ Kreck,
  \textit{Homotopy self-equivalences of 4-manifolds},
  Math.\ Z.\ \textbf{248} (2004), no.\ 1, 147--172.
  \href{https://doi.org/10.1007/s00209-004-0657-9}{doi:10.1007/s00209-004-0657-9}

\bibitem[HKT94]{HKT94}
  I.\ Hambleton, M.\ Kreck, and P.\ Teichner,
  \textit{Nonorientable 4-manifolds with fundamental group of order 2},
  Trans.\ Amer.\ Math.\ Soc.\ \textbf{344} (1994), no.\ 2, 649--665.
  \href{https://doi.org/10.1090/S0002-9947-1994-1234481-2}{doi:10.1090/S0002-9947-1994-1234481-2}

\bibitem[T93]{T93}
  P.\ Teichner,
  \textit{On the signature of four-manifolds with universal covering spin},
  Math.\ Ann.\ \textbf{295} (1993), 745--759.

\bibitem[T97]{T97}
  P.\ Teichner,
  \textit{On the star-construction for topological 4-manifolds},
  in \textit{Geometric Topology (Athens, GA, 1993)},
  AMS/IP Stud.\ Adv.\ Math.\ \textbf{2.2},
  Amer.\ Math.\ Soc., Providence, RI, 1997, pp.\ 300--312.

\bibitem[Tor26]{Tor26}
  R.\ Torres,
  \textit{Topological classification of certain nonorientable 4-manifolds
  with cyclic fundamental group of order 2$\bmod 4$},
  Preprint, arXiv:2601.05132, 2026.

\end{thebibliography}
\end{document}